\documentclass[12pt,twoside]{amsart}
\pdfoutput=1

\usepackage[main=english]{babel}
\usepackage[autostyle,english=american]{csquotes}
\MakeOuterQuote{"}

\usepackage{amssymb,mathtools}
\usepackage{bm}

\usepackage{a4wide} 



\usepackage{amsthm,thmtools}

\usepackage[style=numeric-comp,url=false,isbn=false,maxnames=6]{biblatex}
\addbibresource{Pinwheels-ArXiv.bib}

\usepackage[final]{graphicx}
\graphicspath{{/img}}
\usepackage{subcaption}
\usepackage{tikz}
\usetikzlibrary{babel,cd,shapes,3d,arrows,decorations.markings}
\tikzcdset{arrow style=math font}

\tikzset{
    cross/.style=
    {cross out, draw, solid, thin, 
    minimum size=2*(#1-\pgflinewidth), 
    inner sep=0pt, outer sep=0pt},
    cross/.default={3}
} 

\tikzset{
    circlecross/.style= 
    {circle, minimum width=8pt, draw, inner sep=0pt, path picture={\draw (path picture bounding box.south east) -- (path picture bounding box.north west) (path picture bounding box.south west) -- (path picture bounding box.north east);}},
    circlecross/.default={3}
} 

\tikzset{
    mid arrow/.style=
    {postaction={decorate,decoration={markings,mark=at position .5 with {\arrow[xshift=2pt,#1]{stealth}}}}},
} 

\usepackage[shortlabels]{enumitem}


\usepackage[final]{hyperref}
\usepackage[noabbrev,capitalize]{cleveref}
\creflabelformat{equation}{#2\textup{#1}#3}
\hypersetup{
  pdfcreator = {},
  pdfproducer = {}
}

\declaretheorem[numberwithin=section]{theorem}
\declaretheorem[sibling=theorem]{lemma, proposition, corollary}
\declaretheorem[sibling=theorem,style=definition]{definition, example}
\declaretheorem[sibling=theorem,style=remark]{remark}
\declaretheorem[style=remark,numbered=no,name=Remark]{rem}
\declaretheorem[name=Theorem,
refname={Theorem,Theorems},
Refname={Theorem,Theorems}]{maintheorem}

\newtheorem*{question}{Question}

\setcounter{tocdepth}{1}

\begin{document}

\author[]{Nikolas Adaloglou}
\address{Nikolas Adaloglou, 
    Mathematisch Instituut, 
    Universiteit Leiden}
\email{n.adaloglou@math.leidenuniv.nl} 

\author[]{Johannes Hauber}
\address{Johannes Hauber,
    Institut de Math\'ematiques,
    Universit\'e de Neuch\^atel}
\email{johannes.hauber@unine.ch} 

\date{\today}
\title[]{Pinwheels in symplectic rational and ruled surfaces and non-squeezing of rational homology balls}

\begin{abstract}
We use almost toric fibrations and the symplectic rational blow-up to determine when certain Lagrangian pinwheels, which we call \textit{liminal}, embed in symplectic rational and ruled surfaces. 
The case of $L_{2,1}$-pinwheels, namely Lagrangian $\mathbb{R}P^2$'s, answers a question of Kronheimer in the negative, exhibiting a symplectic non-spin $4$-manifold that does not carry a Lagrangian $\mathbb{R}P^2$. 
In addition, we provide applications to symplectic embeddings of rational homology balls. 
In particular, we generalize Gromov's classical non-squeezing theorem by proving that a \textit{rational homology ball} $B_{n,1}(1)$ embeds into the \textit{rational homology cylinder} $B_{n,1}(\alpha,\infty)$ if and only if $\alpha\geq 1$. 
Along the way, we prove various properties of Lagrangian pinwheels of independent interest, such as describing their homological complement, providing a short proof that performing a symplectic rational blow-up of a Lagrangian pinwheel in a positive symplectic rational manifold yields a symplectic manifold which is also rational, and showing a self-intersection formula for Lagrangian pinwheels.
\end{abstract}

\maketitle

\tableofcontents

\section{Introduction}

Lagrangian $L_{p,q}$-pinwheels are certain Lagrangian CW-complexes that often exhibit remarkable rigidity properties. 
They were originally defined by Khodorovskiy in \cite{Kho14:Embratball} as Lagrangian skeleta of certain rational homology balls. 
Later they were reinterpreted by Evans--Smith \cite{EvSm18} as vanishing cycles of Gorenstein deformations of cyclic quotient singularities.
This point of view creates a bridge between the algebraic geometry of deformations of complex algebraic surfaces and symplectic geometry.
The study of Lagrangian pinwheels has the potential to give new insight to degeneration problems, as done in \cite{EvSm20:Bounds}.
In addition, determining the Lagrangian pinwheel content of a symplectic $4$-manifold, or equivalently which rational homology balls symplectically embeds in it, has interesting applications also in the topological study of $4$-manifolds, as shown in \cite{EtHyPi2023, Ow2018:equiemb, Ow20:nonsymp, LiPa22, LiPa24}.

Apart from their algebro-geometric origin, a key property of Lagrangian pinwheels is that it is easy to construct them as visible Lagrangians of almost toric fibrations, thus they provide very concrete objects to study. 
In this respect, there is a broad motivating question that has its origins in observations in \cite{Ev22:Kb} and \cite{Ev23:Book}, following Symington's "four-from-two"-philosophy \cite{Sym02:Fourtwo}.

\begin{question}
    Consider a symplectic four-manifold $(X,\omega)$ and all the possible compatible almost toric fibrations on $(X,\omega)$. 
    Are there Lagrangians in $(X,\omega)$ that cannot be constructed as visible Lagrangians of some compatible almost toric fibration?
\end{question}

In other words, the above question asks how much of the symplectic geometry of a four-manifold can be read from the base of an almost toric fibration. 
All the known examples \cite{SmSh20, Ev22:Kb, Ada24:Pin, Ada24:Pin, EvSm18} show that all the possible embedding obstructions that a Lagrangian submanifold has can be read from the almost toric base diagrams. 

Specifically, in \cite[Problem J.7]{Ev23:Book} Evans asks for which symplectic forms on $S^2\times S^2$ there exists a Lagrangian $L_{3,1}$-pinwheel. 
This question  was subsequently answered by the first author in \cite{Ada24:Pin} using the symplectic rational blow-up. 
The study of Lagrangian projective spaces, or equivalently $L_{2,1}$-pinwheels, using the rational blow-up had already been initiated by \cite{BoLiWu13} and by \cite{SmSh20}.

The aim of this paper is to generalize \cite[Problem J.7]{Ev23:Book} and determine which Lagrangian $L_{n,1}$-pinwheels exist in symplectic rational and ruled surfaces, namely $X_1=\mathbb{C}P^2\#\overline{\mathbb{C}P^2}$ and $S^2\times S^2$, in some specific homology classes. 

\subsection{Main results}

The two fundamental theorems we prove concern the analogues to \cite[Problem J.7]{Ev23:Book}. 
We will use these two theorems in order to prove corollaries thereof. 

\begin{maintheorem}\emph{(\cref{Corollary_Construction_S2xS2} and \cref{prop:main_calculation})}\label{thrm:A}
In $(S^2\times S^2,\omega_{a,b})$, consider the homology class $A+kB \in H_2(S^2\times S^2;\mathbb{Z}_{2k+1})$. 
Assume that this class carries an Lagrangian $L_{2k+1,1}$-pinwheel, which is either
\begin{enumerate}
    \item disjoint from a smooth sphere in the class  $A+(k+1)B$ \textbf{or}
    \item disjoint from a smooth sphere in the class $2A+B$.
\end{enumerate}
Then, the following inequality holds:
\begin{equation}\label{ineq:s2xs2}
    \frac{a}{k+1}<b<2a.
\end{equation}
The inequality is sharp in the sense that if \cref{ineq:s2xs2} holds there exists a Lagrangian $L_{2k+1,1}$-pinwheel representing the homology class $A+kB$, which is disjoint from a \textbf{symplectic} sphere representing the homology class $A+(k+1)B$ \textbf{or} from a \textbf{symplectic} sphere representing the homology class $2A+B$.
\end{maintheorem}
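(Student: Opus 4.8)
The statement bundles together an \emph{obstruction} — that \eqref{ineq:s2xs2} is forced once the pinwheel and the disjoint sphere coexist — with a \emph{construction} realizing it sharply, and realizing moreover a \textbf{symplectic} disjoint sphere. I would prove the two halves separately.

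\emph{The obstruction.} Let $P\subset(S^2\times S^2,\omega_{a,b})$ be a Lagrangian $L_{2k+1,1}$-pinwheel in the class $A+kB$, and fix a standard neighbourhood $\nu(P)\cong B_{2k+1,1}(\mu)$, a symplectic rational homology ball of some size $\mu>0$, with $\partial\nu(P)=L((2k+1)^2,2k)$. I would then perform the symplectic rational blow-up along $P$, replacing $\nu(P)$ by a scaled linear plumbing of symplectic spheres $C_1,\dots,C_{2k}$ of self-intersections $-(2k+3),-2,\dots,-2$ (with $2k-1$ entries equal to $-2$; these are the negatives of the continued-fraction coefficients of $(2k+1)^2/2k$, equivalently the Fintushel--Stern configuration bounding $L((2k+1)^2,2k)$). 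By the rationality of rational blow-ups of pinwheels in positive rational manifolds, the resulting closed symplectic $4$-manifold $\tilde X=(S^2\times S^2\setminus\nu(P))\cup(\text{plumbing})$ is rational; since $b_2(\tilde X)=2k+2$ and $\sigma(\tilde X)=-2k$, it is diffeomorphic to $\mathbb{C}P^2\#(2k+1)\overline{\mathbb{C}P^2}$.

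\emph{The main calculation.} The hypothesised smooth sphere $S$, being disjoint from $P$, lies in $S^2\times S^2\setminus\nu(P)$ and survives in $\tilde X$ as an embedded smooth sphere $\tilde S$, with $[\tilde S]^2=(A+(k+1)B)^2=2(k+1)$ in case (1) and $[\tilde S]^2=(2A+B)^2=4$ in case (2), and with $[\tilde S]\cdot[C_j]=0$ for every $j$. Using the description of the homological complement of $P$, one pins down the relevant lattice: in a standard basis $H,E_1,\dots,E_{2k+1}$ of $H_2(\tilde X;\mathbb{Z})$ with $H^2=1$ and $E_i^2=-1$, the classes $[C_j]$ span the negative-definite plumbing sublattice and $[\tilde S]$ lies in its rank-$2$ orthogonal complement. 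The symplectic form $\tilde\omega_\mu$ restricts to $\omega$ on $S^2\times S^2\setminus\nu(P)$, so $\tilde\omega_\mu(\tilde S)=a+(k+1)b$ (resp.\ $2a+b$); it takes prescribed positive multiples of $\mu$ on the $[C_j]$; and $[\tilde\omega_\mu]^2$ differs from $2ab$ by an explicit function of $\mu$. Together these determine $[\tilde\omega_\mu]\in H^2(\tilde X;\mathbb{R})$ in terms of $(a,b,\mu)$ — it is precisely the disjointness hypothesis that supplies the last datum needed to do so. Imposing that $[\tilde\omega_\mu]$ lie in the symplectic cone of $\mathbb{C}P^2\#(2k+1)\overline{\mathbb{C}P^2}$ (equivalently that it pair positively with the line class and with every exceptional class) together with $\mu>0$, and then eliminating $\mu$, yields \eqref{ineq:s2xs2}. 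I expect this elimination — fixing the lattice embedding, the $\mu$-dependence, and the cone inequalities correctly, uniformly in $k$ and for both cases (1) and (2) — to be the main technical obstacle.

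\emph{The construction.} Conversely, assume \eqref{ineq:s2xs2}. Starting from the toric moment rectangle of $(S^2\times S^2,\omega_{a,b})$, I would perform a suitable sequence of nodal trades and nodal slides to produce a compatible almost toric fibration containing a visible Lagrangian $L_{2k+1,1}$-pinwheel over an interior segment in the class $A+kB$; the content of \eqref{ineq:s2xs2} is exactly that the base diagram carrying the requisite slid node fits inside the polytope. In the same diagram one reads off a visible \textbf{symplectic} sphere — over a boundary edge, or over a second interior segment — representing $A+(k+1)B$ or $2A+B$, automatically disjoint from the pinwheel because the two carrying segments in the base are disjoint; the two target classes correspond to two overlapping sub-families of diagrams which together cover the whole region \eqref{ineq:s2xs2}. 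This half is combinatorial rather than analytic, the work being in choosing the mutation sequence and verifying the homology classes and the disjointness. Combining the two halves gives \cref{thrm:A}.
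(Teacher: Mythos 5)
Your overall strategy coincides with the paper's: the construction half is done exactly as you describe, by nodal trades and a sequence of mutations producing a visible $L_{2k+1,1}$-pinwheel together with visible symplectic spheres in the classes $A+2B$ and $(k+1)A+B$ over disjoint parts of the base diagram; and the obstruction half proceeds by rationally blowing up, landing in $\mathbb{C}P^2\#(2k+1)\overline{\mathbb{C}P^2}$ (your Betti number and signature count is correct), and extracting \cref{ineq:s2xs2} from positivity of periods. However, the step you defer as "the main technical obstacle" --- fixing the lattice embedding of the chain $[C_0],\dots,[C_{2k-1}]$ and of $[\tilde S]$ in the standard basis $H,E_1,\dots,E_{2k+1}$ --- is not a routine elimination but the actual mathematical content of the obstruction, and it is precisely where the disjointness hypothesis is consumed. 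The paper's route is: (i) show that the class of the disjoint sphere is primitive and \emph{ordinary} (non-characteristic), the latter by exhibiting an exceptional sphere in its complement via McDuff's classification of minimal fillings of lens spaces; (ii) invoke Li--Li's diffeomorphism classification of spherical classes (\cref{theorem:LiLi}) to normalize that class to $n H-(n-1)E_1-E_2-\cdots-E_{n-1}$ or $3H-2E_1-E_n$ up to diffeomorphism; (iii) solve a Diophantine system, with Dehn-twist normalizations, to pin down the chain classes (\cref{lemma:Ci_config_2spheres}, \cref{lemma:C0_config}). Without an argument of this type your "explicit function of $\mu$" cannot be written down, so as it stands the obstruction half has a genuine gap.

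A secondary issue: you propose to close the linear system with the volume constraint $[\tilde\omega_\mu]^2$ rather than with a second class in the rank-two orthogonal complement of the plumbing lattice. The $2k-1$ chain areas plus $\tilde\omega_\mu(\tilde S)$ cut out an affine line in $H^2(\tilde X;\mathbb{R})$, and the volume is quadratic along it, so you generically get two candidate cohomology classes and must argue separately which one occurs. The paper avoids this by using \emph{both} generators $W_1=A+(k+1)B$ and $W_2=2A+B$ of the homological complement of the pinwheel (shown in \cref{thrm:pinwheel_hom/orth_comp} to admit representatives disjoint from $L$ even when only one is assumed to be a sphere), which supplies a second \emph{linear} constraint and determines $[\tilde\omega_\mu]$ uniquely; positivity of the areas of $E_{n-1}$ and $E_n$ (guaranteed by Li--Liu since these are smoothly spherical) then yields both sides of \cref{ineq:s2xs2} at once.
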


\begin{maintheorem}\emph{(\cref{Corollary_Construction_X1} and \cref{prop:main_calculation})}\label{thrm:B}
In $(X_1,\omega_{h,\mu})$, consider the homology class $kH+(k+1)E \in H_2(X_1;\mathbb{Z}_{2k})$.
Assume that this class carries a Lagrangian $L_{2k,1}$-pinwheel which is either
\begin{enumerate}
    \item disjoint from a smooth sphere in the class $(k+1)H-kE$ \textbf{or}
    \item disjoint from a smooth sphere in the class $2H$.
\end{enumerate}
Then the following inequality holds:
\begin{equation}\label{ineq:X1}
\mu<\frac{k}{k+1}h.
\end{equation}

The inequality is sharp in the sense that if \cref{ineq:X1} holds there exists a Lagrangian $L_{2k,1}$-pinwheel in the homology class $kH+(k+1)E$ which is disjoint from a \textbf{symplectic} sphere representing the homology classes $(k+1)H-kE$ \textbf{or} from a \textbf{symplectic} sphere representing the homology class $2H$.
\end{maintheorem}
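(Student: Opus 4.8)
The statement is the combination of \cref{prop:main_calculation}, which gives the necessity of \cref{ineq:X1}, and \cref{Corollary_Construction_X1}, which gives its sharpness by constructing the pinwheel together with the disjoint symplectic sphere; I sketch a route to each, treating throughout case (1), a disjoint sphere in the class $(k+1)H-kE$, the case of a sphere in the class $2H$ being entirely parallel. For the necessity part, suppose $P\subset(X_1,\omega_{h,\mu})$ is a Lagrangian $L_{2k,1}$-pinwheel in the class $kH+(k+1)E$ disjoint from a smoothly embedded sphere $S$ in the class $(k+1)H-kE$. The plan is to excise a standard neighbourhood $\nu(P)$ --- symplectomorphic to a portion of the rational homology ball $B_{2k,1}$, whose boundary is the lens space $L(4k^2,2k-1)$ --- and glue in the minimal symplectic resolution of the corresponding cyclic quotient singularity, a linear plumbing $\mathcal C$ of $2k-1$ symplectic spheres with self-intersections $-(2k+2),-2,\dots,-2$, obtaining $(\widetilde X,\widetilde\omega)$ with $\widetilde\omega|_{X_1\setminus\nu(P)}=\omega_{h,\mu}$ and with the areas of the spheres of $\mathcal C$ prescribed by the size of $\nu(P)$. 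Since $\langle c_1(X_1),[\omega_{h,\mu}]\rangle=3h-\mu>0$, the manifold $(X_1,\omega_{h,\mu})$ is a positive symplectic rational manifold, so the rationality-preservation result recalled above applies and $(\widetilde X,\widetilde\omega)$ is again symplectic rational; counting Betti numbers forces $\widetilde X\cong\mathbb{C}P^2\#2k\,\overline{\mathbb{C}P^2}$, and after a deformation $\widetilde\omega$ is a standard K\"ahler form determined by the periods of a standard basis.

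The heart of the matter, which is what \cref{prop:main_calculation} carries out, is then to express in a standard basis $h',e'_1,\dots,e'_{2k}$ of $H_2(\widetilde X;\mathbb Z)$ the classes of the spheres of $\mathcal C$, the class of the surviving sphere $S$ (which is unchanged, as $S\cap\nu(P)=\emptyset$), and the resulting relation with $kH+(k+1)E$; here one feeds in the description of the homological complement of a Lagrangian pinwheel and the self-intersection formula for pinwheels established earlier in the paper. With these classes in hand, one imposes the constraints that hold on any symplectic rational surface with $b^+=1$: each sphere of $\mathcal C$, being symplectic, has positive $\widetilde\omega$-area; $S$, which one may take to be $\widetilde\omega$-symplectic, has positive area, stays disjoint from $\mathcal C$, and satisfies adjunction; and the symplectic cone of $\mathbb{C}P^2\#2k\,\overline{\mathbb{C}P^2}$ obeys positivity on exceptional classes and the light-cone lemma. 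Rewriting this finite linear system in the parameters $h,\mu$ should collapse it to exactly $\mu<\tfrac{k}{k+1}h$. I expect the identification of the homology classes and the verification that the resulting inequalities are equivalent to \cref{ineq:X1} --- neither strictly weaker, so the bound is a genuine obstruction, nor strictly stronger, so it is sharp --- to be the main obstacle; a secondary technical point is to confirm that the symplectic rational blow-up can really be performed with $P$ as its centre, i.e.\ that $\nu(P)$ is a standard $B_{2k,1}$-neighbourhood whose size can be controlled.

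For the sufficiency part, assuming $\mu<\tfrac{k}{k+1}h$, I would produce the pinwheel by hand as a visible Lagrangian of an almost toric fibration, in the spirit of Symington's "four-from-two" philosophy. Starting from the Delzant trapezoid of $(X_1,\omega_{h,\mu})$, apply a sequence of nodal trades and mutations creating a node such that the straight segment joining a suitable interior point of the base to that node supports a visible Lagrangian $L_{2k,1}$-pinwheel. It then remains to check three affine--combinatorial facts on the base diagram, which is the content of \cref{Corollary_Construction_X1}: that this visible pinwheel represents the class $kH+(k+1)E$; that its complement still contains a visible symplectic sphere in the class $(k+1)H-kE$ (respectively $2H$); and that the mutated diagram actually closes up, i.e.\ the node and segment fit inside the base polytope, which happens precisely when $\mu<\tfrac{k}{k+1}h$. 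This last "room" condition is exactly what makes \cref{ineq:X1} sharp, and a parallel diagram handles the sphere in the class $2H$.
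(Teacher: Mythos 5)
Your overall strategy coincides with the paper's: the sufficiency is proved by exactly the almost toric construction you describe (\cref{Corollary_Construction_X1}, obtained by nodal trades and an iterated mutation on the Delzant trapezoid, with the "room" condition being precisely $\mu<\frac{k}{k+1}h$), and the necessity is proved by rationally blowing up the pinwheel, landing in $X_{2k}$ by \cref{Lemma_Rational_Blowup_in_S2S2_X1} and \cref{cor: rationality preserv}, expressing the configuration in a standard basis, and reading off the inequality from positivity of periods. So there is no divergence of route to report.

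There is, however, a genuine gap at the step you yourself flag as "the main obstacle": identifying the classes of the chain $\mathcal{C}$ and of the surviving sphere in a standard basis of $H_2(\mathbb{C}P^2\#2k\,\overline{\mathbb{C}P^2};\mathbb{Z})$ is not a routine consequence of the homological complement description and the self-intersection formula, and it is exactly here that the disjointness hypothesis (liminality) does its work. The paper's mechanism is: (a) the smooth sphere $S$ disjoint from the pinwheel survives as a smooth sphere representing a class $D_i$ in the blown-up manifold; (b) one shows $D_i$ is \emph{primitive} and \emph{ordinary} (not characteristic) --- the ordinariness is not numerical for general $k$ but uses Li--Li's result that a smoothly spherical class is symplectically spherical for some form, plus McDuff's classification of fillings of lens spaces to produce an exceptional sphere disjoint from it (\cref{lemma:conditions_Di}); (c) Li--Li's \cref{theorem:LiLi} then normalizes $D_i$ by a diffeomorphism to $nH-(n-1)E_1-E_2-\cdots-E_{n-1}$ or $3H-2E_1-E_n$; (d) a Diophantine analysis of the orthogonality, adjunction, and self-intersection constraints, together with Dehn twists fixing $D_i$, pins down the whole chain (\cref{lemma:Ci_config_2spheres}, \cref{lemma:C0_config}). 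Without step (b)--(c) you have no way to start the computation, and without the Dehn-twist normalization in (d) the chain is not determined. A second, smaller correction: you cannot take $S$ to be $\widetilde\omega$-symplectic, and neither adjunction for $S$ nor the light-cone lemma is what closes the argument; the inequality comes solely from the positivity of the areas $\mu_{2k-1},\mu_{2k}$ of the last two exceptional classes (computed explicitly in \cref{cor: mu_i periods} in terms of $d_1=(k+1)h-k\mu$, $d_2=2h$ and the positive areas $c_i$ of the chain), which yields $(2k+2)d_1-(2k+1)d_2>0$, i.e.\ $\mu<\frac{k}{k+1}h$.
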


As is evident from the theorems there are two parts to the proofs of them.

One part is the constructive side, for which we will use almost toric fibrations as briefly introduced in \cref{subsec:atfs}. 
The construction itself will then be carried out in \cref{Section-1-Construction}. 
In \cref{Section-1-Construction} we will also introduce a procedure to compactify the rational homology ball $B_{n,1}$.

The other part is the obstructive side, for which we will use the rational blow-up procedure as discussed in \cref{sec:ratblow}. 
The obstructive side of the proof is then done in \cref{sec:obstruction}.

The disjointedness condition appearing in the statements of the main theorems should be regarded as some kind of weak condition on the isotopy class of the pinwheels, namely that they, more or less, behave like the Lagrangian pinwheels considered by Khodorovskiy in \cite{Kho13:Sympratblo}. 

This disjointedness condition is not obviously superfluous.
Indeed, there exist non-isotopic but homologous pinwheels, and one of the ways to tell them apart is to show that one isotopy class may be made disjoint from a certain sphere while the other cannot. 
Such an example is given in \cite[Proposition 1.4]{BoLiWu13}.
Even though we restricted our attention only to these special pinwheels, which we will call "liminal pinwheels", compare \cref{def:liminal_pinwheel}, the bounds we proved in \cref{thrm:A} and \cref{thrm:B} are enough for several applications. 

One such application is the following corollary, which is a generalization of Khodorovskiy's \cite[Theorem 1.2 and 1.3]{Kho14:Embratball}. 
In this paper Khodorovskiy gave examples of smooth embeddings of rational homology balls into normal neighborhoods of certain spheres.
We prove that no such symplectic embeddings exists. 
Denote by $(V_{-n},\tau)$ a standard symplectic neighborhood of a symplectic $(-n)$-sphere.

\begin{maintheorem}\emph{(Theorems \ref{thrm:Kho_-n-1} and \ref{thrm:Kho_-4_and_2})}\label{thrm:Kho}
Suppose that $n\geq 2$ is an integer. Then there are no symplectic embeddings of the rational homology ball $B_{n,1}$ into $(V_{-n-1},\tau)$, even though smooth ones exist.

In addition, there are no symplectic embeddings of $B_{2k+1,1}$ into $(V_{-4},\tau)$, even though smooth ones exist.
\end{maintheorem}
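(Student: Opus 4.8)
The smooth embeddings are constructed by Khodorovskiy in \cite{Kho14:Embratball}, so what must be proved is the non-existence of symplectic ones, and the plan is to deduce this from the obstructive halves of \cref{thrm:A} and \cref{thrm:B}. The key geometric observation is that $S^2\times S^2$ and $X_1$ are the Hirzebruch surfaces $\mathbb{F}_0$ and $\mathbb{F}_1$, and that if $\Sigma$ is a symplectic section of one of their rulings, then the complement of an open tubular neighborhood of $\Sigma$ is a symplectic disk bundle over $S^2$ whose zero section $\Sigma'$ is a symplectic sphere, disjoint from $\Sigma$, with $(\Sigma')^2=-\Sigma^2$. Equivalently, a standard neighborhood $V_{-m}$ of a symplectic $(-m)$-sphere sits inside a rational ruled surface as such a complement, with a section $\Sigma$ of self-intersection $m$ available in the removed region. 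I would apply this in three ways: in $S^2\times S^2$ with $\Sigma$ in the class $A+(k+1)B$, so that $\Sigma'\in A-(k+1)B$ and $m=2k+2$; in $X_1$ with $\Sigma$ in the class $(k+1)H-kE$, so that $\Sigma'\in(k+1)E-kH$ and $m=2k+1$; and in $S^2\times S^2$ with $\Sigma$ in the class $2A+B$, so that $\Sigma'\in B-2A$ and $m=4$. These recover exactly the neighborhoods $V_{-n-1}$ (the first case when $n$ is odd, the second when $n$ is even) and $V_{-4}$ in the statement, and the three sections $\Sigma$ are precisely the spheres occurring in \cref{thrm:A} and \cref{thrm:B}.

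Next I would set up the two directions of this neighborhood picture. First, one checks by a one-line computation that $\Sigma'$ has positive symplectic area exactly when the relevant inequality fails: $\omega(A-(k+1)B)>0$ means $b<\tfrac{a}{k+1}$, so \cref{ineq:s2xs2} fails; $\omega((k+1)E-kH)>0$ means $\mu>\tfrac{k}{k+1}h$, so \cref{ineq:X1} fails; and $\omega(B-2A)>0$ means $b>2a$, so \cref{ineq:s2xs2} fails again. Choosing the ambient form large in the fibre direction (for instance $a=(k+1)b+\varepsilon$ with $b$ large, keeping the area $\varepsilon$ of $\Sigma'$ fixed, and similarly in the other two cases), the complementary section $\Sigma$ can be realised by a $J$-holomorphic sphere with arbitrarily thin neighborhood, so that its complement is a symplectic copy of $V_{-m}$ wide enough to contain any prescribed standard neighborhood $(V_{-m},\tau)$. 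Conversely, the adjunction formula shows that in $\mathbb{F}_0$ and $\mathbb{F}_1$ the only classes of the relevant negative self-intersection carried by an embedded symplectic sphere are the classes $\Sigma'$ above (in $S^2\times S^2$, up to swapping the two factors); hence any symplectic embedding of a $V_{-m}$ into one of these surfaces as a neighborhood of its zero section must put that zero section in the class $\Sigma'$, forcing $\omega(\Sigma')>0$, i.e. the inequality to fail.

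Now suppose, for contradiction, that $B_{n,1}$ embeds symplectically into the relevant $(V_{-m},\tau)$. Composing with the embedding $(V_{-m},\tau)\hookrightarrow(X,\omega)$ of the previous paragraph gives a symplectic embedding $B_{n,1}\hookrightarrow(X,\omega)$ whose image lies in $V_{-m}=X\setminus N(\Sigma)$ and is therefore disjoint from the symplectic — in particular smooth — sphere $\Sigma$. The Lagrangian skeleton of $B_{n,1}$ is then a Lagrangian $L_{n,1}$-pinwheel $P$ in $(X,\omega)$ disjoint from a smooth sphere in the class $A+(k+1)B$, $(k+1)H-kE$, or $2A+B$ respectively. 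Using the self-intersection formula for Lagrangian pinwheels together with the description of their homological complement — or, concretely, Khodorovskiy's explicit embedding — one identifies the torsion class of $P$: it is $A+kB\in H_2(S^2\times S^2;\mathbb{Z}_{2k+1})$ in the two $S^2\times S^2$ cases and $kH+(k+1)E\in H_2(X_1;\mathbb{Z}_{2k})$ in the $X_1$ case. Thus the hypotheses of the appropriate case of \cref{thrm:A} or \cref{thrm:B} are satisfied, so the corresponding inequality \cref{ineq:s2xs2} or \cref{ineq:X1} must hold — contradicting the choice of $\omega$. This proves \cref{thrm:Kho_-n-1} and \cref{thrm:Kho_-4_and_2}, hence \cref{thrm:Kho}.

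I expect the main obstacle to be pinning down the homology class of $P$: for an arbitrary symplectic embedding of $B_{n,1}$, not only Khodorovskiy's, one must rule out that $P$ represents a competing torsion class of the same self-intersection, which is precisely where the self-intersection formula and the homological-complement computation are needed. A secondary and more routine point is to make the neighborhood-embedding step rigorous, namely that a fixed standard $(V_{-m},\tau)$ genuinely embeds into a suitably chosen rational ruled surface realising the stated failure of \cref{ineq:s2xs2} or \cref{ineq:X1}, and that the complementary section $\Sigma$ can be taken $J$-holomorphic and hence disjoint from the image of $B_{n,1}$ by positivity of intersections.
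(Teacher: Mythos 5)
Your proposal is correct and follows essentially the same route as the paper: realize $V_{-m}$ inside a rational and ruled surface so that the section at infinity supplies the disjoint smooth sphere making the pinwheel liminal, pin down its torsion class via the soft obstructions, and let \cref{thrm:A} or \cref{thrm:B} contradict the resulting period inequalities. The paper streamlines your step of embedding $V_{-m}$ into a pre-chosen ambient surface by simply compactifying the given $(V_{-m},\tau)$ via a symplectic cut (\cref{lemma:compactification}), which produces the violating periods directly and makes your adjunction/uniqueness discussion for the zero-section class unnecessary.
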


In the same vain, our constructions show that some symplectic forms on $B_{2,1}\# \overline{\mathbb{C}P^2}$ support symplectic embeddings of the $B_{n,1}$-balls while others don't, but we will not discuss this in detail. 
The proof of \cref{thrm:Kho} is a direct corollary of \cref{thrm:A} and \cref{thrm:B} and can be found in \cref{sec:corollaries}.

A surprising corollary of \cref{thrm:A} and \cref{thrm:B} is an analogue of Gromov's non-squeezing theorem \cite{Gr85} for some rational homology balls. 
All the notation in the statement of the theorem should be read as natural generalizations of the usual definitions for symplectic embedding problems, i.e.\ $B_{n,1}(1)$ is the "quantitative" rational homology ball, and $B_{n,1}(\alpha,\infty)$ is the rational homology cylinder of "width" $\alpha$. 
For a pictorial representation of these objects, as well as the non-squeezing theorem compare \cref{fig:non_squeezing_intro}.

\begin{maintheorem}\emph{(Theorem \ref{thrm: non-squeezing})}\label{thrm:nonsqueezing}
    There exists a symplectic embedding $\iota:B_{n,1}(1)\hookrightarrow B_{n,1}(\alpha,\infty)$ if and only if $\alpha \geq 1$.
\end{maintheorem}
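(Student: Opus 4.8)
The plan is to derive this non-squeezing statement as a direct consequence of \cref{thrm:A} and \cref{thrm:B}, treating the cases $n=2k+1$ and $n=2k$ separately. The key bridge is the compactification procedure for $B_{n,1}$ developed in \cref{Section-1-Construction}: I expect that the rational homology cylinder $B_{n,1}(\alpha,\infty)$ can be identified, for each admissible $\alpha$, with the complement of a standard neighbourhood of a symplectic sphere $\Sigma$ inside $(S^2\times S^2,\omega_{a,b})$ when $n=2k+1$ (respectively inside $(X_1,\omega_{h,\mu})$ when $n=2k$), where $\Sigma$ lies in the class $A+(k+1)B$ or $2A+B$ (respectively $(k+1)H-kE$ or $2H$) --- exactly the classes appearing in the disjointedness hypotheses --- and where the width $\alpha$ corresponds to an explicit relation among the Kähler parameters (with the "infinite height" of the cylinder matching the limit in which one of the two parameters is sent to infinity). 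Under this identification the Lagrangian skeleton of $B_{n,1}$, which is an $L_{n,1}$-pinwheel of liminal type (\cref{def:liminal_pinwheel}), represents the class $A+kB$ (respectively $kH+(k+1)E$) and is automatically disjoint from $\Sigma$.

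For the "if" direction I would argue that when $\alpha\geq 1$ the embedding $B_{n,1}(1)\hookrightarrow B_{n,1}(\alpha,\infty)$ is produced directly by the construction underlying the sharpness statements of \cref{thrm:A} and \cref{thrm:B}. That construction yields a Lagrangian $L_{n,1}$-pinwheel disjoint from a symplectic sphere $\Sigma$ in the relevant class; a Weinstein neighbourhood of the pinwheel is symplectomorphic to $B_{n,1}(1)$, and excising a neighbourhood of $\Sigma$ realises the ambient surface minus $\Sigma$ as $B_{n,1}(\alpha,\infty)$ with $\alpha$ at the boundary of the admissible range. Monotonicity of the cylinder in its width then upgrades this to every $\alpha\geq 1$.

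For the "only if" direction, suppose $\iota\colon B_{n,1}(1)\hookrightarrow B_{n,1}(\alpha,\infty)$ is a symplectic embedding. For every $\varepsilon>0$ the restriction gives $B_{n,1}(1-\varepsilon)\hookrightarrow B_{n,1}(\alpha,\infty)$, so the image of the Lagrangian skeleton is a Lagrangian $L_{n,1}$-pinwheel $P\subset B_{n,1}(\alpha,\infty)$. Compactifying as above, $P$ sits inside $(S^2\times S^2,\omega_{a,b})$ (respectively $(X_1,\omega_{h,\mu})$) in the class $A+kB$ (respectively $kH+(k+1)E$), disjoint from the symplectic sphere $\Sigma$. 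Applying \cref{thrm:A} (respectively \cref{thrm:B}) forces \cref{ineq:s2xs2} (respectively \cref{ineq:X1}); translated into the width, exactly one side of this two-sided inequality becomes a bound of the form $\alpha>1-\varepsilon'$ with $\varepsilon'\to 0$ as $\varepsilon\to 0$, while the other side is vacuous because the cylinder is unbounded in its height. Letting $\varepsilon\to 0$ yields $\alpha\geq 1$, a contradiction when $\alpha<1$.

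The hard part will be making the identification in the first paragraph precise: carefully tracking, through the compactification of \cref{Section-1-Construction}, how the width $\alpha$ of $B_{n,1}(\alpha,\infty)$ is encoded in $(a,b)$ or $(h,\mu)$, so that the two-sided bounds \cref{ineq:s2xs2} and \cref{ineq:X1} collapse to the single clean condition $\alpha\geq 1$; and verifying that the pinwheel genuinely lands in the homology class to which \cref{thrm:A} and \cref{thrm:B} apply, disjoint from the prescribed sphere. Both points should be structural features of the construction rather than new phenomena, so once the parameter bookkeeping is in place the obstruction is inherited verbatim from \cref{thrm:A} and \cref{thrm:B}.
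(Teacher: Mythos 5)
There is a genuine gap in the "only if" direction: \cref{thrm:A} and \cref{thrm:B} are purely \emph{qualitative} statements about the existence of a Lagrangian pinwheel in a given homology class, and they cannot see the size $1$ of the ball $B_{n,1}(1)$ being embedded (compare \cref{remark_pinwheel=qhb}: the existence of an $L_{p,q}$-pinwheel gives no quantitative information about how large the embedded $B_{p,q}$ is). Concretely, if you truncate the cylinder to $B_{n,1}(\alpha,\beta)$ and compactify to $X_{n,1}(\alpha,\beta)$ as in \cref{subsec:compactification}, then by \cref{lemma_computing_sizes_compactification} you land in $(S^2\times S^2,\omega_{a,b})$ with $a=(k+1)\beta-k\alpha$, $b=\beta+\alpha$ for $n=2k+1$; feeding these into \cref{ineq:s2xs2} gives $b<2a \Leftrightarrow \alpha<\beta$ and $\frac{a}{k+1}<b \Leftrightarrow 0<(2k+1)\alpha$, i.e.\ a tautology plus a relation involving the auxiliary truncation parameter $\beta$, which you may take arbitrarily large. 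For $n=2k$ the inequality $\mu<\frac{k}{k+1}h$ reduces to $0<2\alpha$, again vacuous. So no bound of the form $\alpha>1-\varepsilon'$ ever appears, and your claim that "exactly one side of this two-sided inequality becomes a bound of the form $\alpha>1-\varepsilon'$" fails. (Note also that $B_{n,1}(\alpha,\infty)$ has infinite volume, so it cannot literally be the complement of a sphere neighbourhood in a closed surface; one is forced to work with a finite $\beta$, which is exactly what reintroduces the free parameter.)

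The missing idea, which is the heart of the paper's argument, is to make the rational blow-up \emph{quantitative}: the size $1$ of the embedded ball is encoded in the freedom one has in choosing the symplectic areas $c_i$ of the chain $C_n$ introduced by the blow-up (the paper takes $c_0=4-\sum_{j=2}^{n-1}(j(n+1)+1)\epsilon$, $c_1=n-2$, $c_i=\epsilon$, justified in \cref{Appendix:sizes_blow-up}). One then applies not \cref{thrm:A} or \cref{thrm:B} themselves but the finer \cref{cor: mu_i periods}, which together with $d_1=(n+1)\beta+\alpha$ and $d_2=(n+2)\beta-(n-2)\alpha$ yields $\mu_n=\alpha-1+\epsilon$; positivity of the exceptional area and arbitrariness of $\epsilon$ give $\alpha\geq 1$. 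Your "if" direction is essentially fine but overcomplicated: for $\alpha\geq 1$ the inclusion of moment images $\Delta_{n,1}(1)\subseteq\Delta_{n,1}(\alpha,\infty)$ already provides the embedding.
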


\begin{figure}[ht]
  \centering
  \begin{tikzpicture}
    \begin{scope}[shift={(-7,-2)}]
        \fill[opacity=0.1]
            (0,0) -- (0,2) -- (6,4);
        \draw[thick] 
            (0,2) -- (0,0) -- (6,4);
        \node at 
            (3,1) {$B_{n,1}(1)$};
        \draw[dashed] 
            (0,2) -- (6,4);
        \draw[dashed] 
            (1,1) node[cross]{} -- (0,0);
    \end{scope}
    
    \draw[right hook->] 
        (-1,0) -- node[anchor=south] {$s$} (1,0);
    \draw[thick, red]
        (-0.6,-0.6) -- (0.6,0.6);
    \draw[thick, red]
        (0.6,-0.6) -- (-0.6,0.6);  
    \node[text=red] at 
        (0,-1) {No!};
    \begin{scope}[shift={(2,-2)}]
        \fill[opacity=0.1]
            (0,0) -- (0,4) -- (3,4) -- (3,2);
        \draw[thick] 
            (0,4) -- (0,0) -- (3,2);
        \node at 
            (3,1) {$B_{n,1}(\alpha,1)$};
        \draw[dashed] 
            (3,4) -- (3,2);
        \draw[dashed] 
            (1,1) node[cross]{} -- (0,0);
    \end{scope}

  \end{tikzpicture}
  \caption{For $\alpha <1$ the "quantitative" rational homology ball $B_{n,1}(1)$ does not embed into the rational homology cylinder of "width" $\alpha$.}
  \label{fig:non_squeezing_intro}
\end{figure}
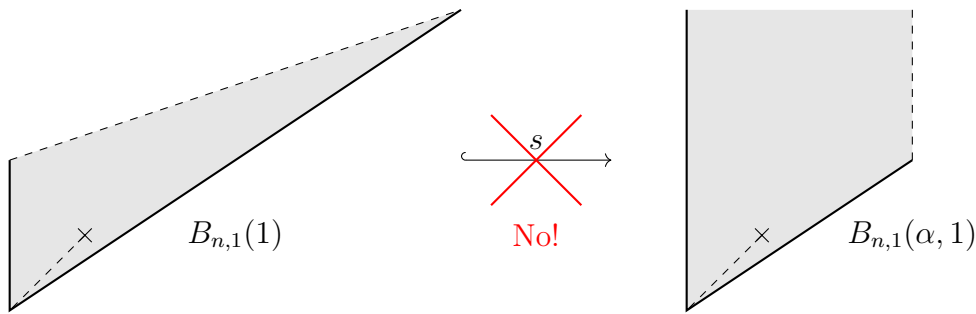

We will introduce the necessary notation in \cref{subsec:compactification}, and the proof of this theorem is given in \cref{sec:corollaries}.

\begin{rem}
    As with Gromov's non-squeezing theorem this theorem opens up a wide range of embedding questions.
    For example one can start looking for symplectic staircases for the rational homology ellipsoid and prove two-ball theorems.
    The techniques developed and discussed in this paper also give a way to tackle these problems by simple computations. 
    The first steps in this direction are proven in the forthcoming paper \cite{BrEvHaSch25}.
\end{rem}

The first result that we prove in this paper concerns the special case $n=2$ of Theorem \ref{thrm:B}, namely determining which symplectic forms on $X_1$ carry a Lagrangian $\mathbb{R}P^2$.
We give special attention to this case and use it as an introduction for two reasons. 
The first reason is pedagogical; even though it is the simplest case, it gives a very good overview of the more complicated ideas and arguments used in the rest of the paper. 
Secondly, it is related to the following question that Kronheimer asks in \cite{Kro11:question}.

\begin{question}\emph{(\cite[Kronheimer]{Kro11:question})}\label{q:Kro}
    Does every simply-connected, non-spin symplectic $4$-manifold contain a Lagrangian $\mathbb{R}P^2$?
\end{question}

We will answer this question in the negative:

\begin{maintheorem}[\cref{thrm: kronh}]\label{thrm:Kro_squeeze_rp2}
The symplectic manifold $(X_1,\omega_{h,\mu})$, by which we denote the one-point blow-up of size $\mu$ of the complex projective plane with the Fubini-Study form that integrates to $h$ over a line, carries a Lagrangian $\mathbb{R}P^2$ if and only if 
\[\mu<\frac{h}{2}.\]
In particular, for any symplectic form with $\mu\geq\frac{h}{2}$, the symplectic non-spin $4$-manifold $(X_1,\omega_{\mu,h})$ does \textbf{not} contain an embedded Lagrangian $\mathbb{R}P^2$.
\end{maintheorem}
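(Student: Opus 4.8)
The plan is to deduce \cref{thrm:Kro_squeeze_rp2} as the special case $k=1$ of \cref{thrm:B}, but with the additional work of removing the disjointedness hypothesis in that one situation. First observe that a Lagrangian $\mathbb{R}P^2$ is exactly a Lagrangian $L_{2,1}$-pinwheel, so for $k=1$ the homology class $kH+(k+1)E = H+2E \in H_2(X_1;\mathbb{Z}_2)$ of \cref{thrm:B} is the unique nonzero class, and in fact the only class that a Lagrangian $\mathbb{R}P^2$ can represent, since $[\mathbb{R}P^2]$ must be $2$-torsion in $H_2(X_1;\mathbb{Z})=\mathbb{Z}^2$ (which is torsion-free) hence it is the nontrivial class in $H_2(X_1;\mathbb{Z}_2)$. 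So no generality is lost in fixing the class. The inequality \cref{ineq:X1} with $k=1$ reads $\mu<\tfrac{1}{2}h$, which is exactly the claimed bound, and the sharpness half of \cref{thrm:B} (the almost toric construction of \cref{Section-1-Construction}, i.e.\ \cref{Corollary_Construction_X1}) already produces a Lagrangian $L_{2,1}$-pinwheel — a Lagrangian $\mathbb{R}P^2$ — whenever $\mu<\tfrac h2$. So the "if" direction and the sharpness statement are immediate.

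For the "only if" direction I must show that the disjointedness hypothesis in \cref{thrm:B} is automatically satisfied when the pinwheel is an $\mathbb{R}P^2$, or else argue around it. The key point is a positivity/intersection argument: given any Lagrangian $\mathbb{R}P^2$ in $(X_1,\omega_{h,\mu})$, one wants to find a symplectic sphere in the class $2H$ (or $(k+1)H-kE = 2H-E$) disjoint from it. The natural approach is to appeal to the results proven "along the way" in the paper: the homological complement of a Lagrangian pinwheel is described, and there is a self-intersection formula. Concretely, the class $2H$ has a symplectic representative for all $\mu<h$ (it is represented by a smooth conic, and by Gromov/McDuff's results on rational surfaces such classes of positive area and nonnegative square are symplectically realized), and one uses the self-intersection formula together with the fact that $[\mathbb{R}P^2]\cdot[\text{sphere in }2H]$ is forced to vanish mod $2$ and in fact can be arranged to be geometrically zero after an isotopy — using that $2H$ can be pushed off the pinwheel's skeleton because the normal data of a Lagrangian $\mathbb{R}P^2$ constrains local intersections. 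Alternatively, and perhaps more robustly, one can run the rational blow-up of \cref{sec:ratblow} directly on the Lagrangian $\mathbb{R}P^2$: replacing a neighborhood of the $\mathbb{R}P^2$ (a twisted disk bundle, i.e.\ the rational homology ball $B_{2,1}$) by its rational blow-up yields a closed symplectic $4$-manifold $\widetilde X$ with $b_2$ increased by $1$; identifying $\widetilde X$ via its homology and Seiberg–Witten / rationality data (using the paper's proposition that a rational blow-up of a pinwheel in a positive rational manifold stays rational) as a blow-up $\mathbb{C}P^2\#2\overline{\mathbb{C}P^2}$, and then computing the symplectic volume and the areas of the new and the proper-transform exceptional classes, produces the numerical constraint $\mu<\tfrac h2$ directly — and this route does not reference a disjoint sphere at all.

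I would present the argument along the second route since it is cleaner and self-contained: (i) reduce to the unique torsion class; (ii) invoke \cref{Corollary_Construction_X1} for sharpness and the "if" direction; (iii) for the "only if" direction, perform the symplectic rational blow-up of \cref{sec:ratblow} on the given Lagrangian $\mathbb{R}P^2$, obtaining a closed symplectic manifold which by the rationality proposition is a symplectic rational surface with $b_2(\widetilde X)=3$, hence $(\mathbb{C}P^2\#2\overline{\mathbb{C}P^2},\widetilde\omega)$; (iv) read off the cohomology class of $\widetilde\omega$ from the areas of the two exceptional divisors created by the blow-up (their areas are determined by the area of the pinwheel, which in turn is pinned down by the symplectic area identity for the pinwheel in class $H+2E$), and impose that all exceptional spheres in $\widetilde X$ have positive area — equivalently that $\widetilde\omega$ lies in the symplectic cone — which after a short computation is equivalent to $\mu<\tfrac h2$; (v) conclude. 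The main obstacle is step (iv): correctly bookkeeping which homology classes in $\widetilde X=\mathbb{C}P^2\#2\overline{\mathbb{C}P^2}$ the two new spheres represent and what their symplectic areas are in terms of $h$, $\mu$, and the size parameter of the pinwheel, and then checking that the "no negative-area exceptional class" condition really collapses to the single inequality $\mu<\tfrac h2$ rather than a weaker or spurious bound — this is exactly the computation carried out in \cref{sec:obstruction} (via \cref{prop:main_calculation}) in the general case, specialized to $k=1$.
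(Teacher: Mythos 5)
Your overall strategy is the same as the paper's: rather than trying to verify the disjointedness hypothesis of \cref{thrm:B}, the paper runs the rational blow-up directly on the given Lagrangian $\mathbb{R}P^2$, identifies the result as $(X_2,\omega_{\tilde h,\bm{\tilde\mu}})$, solves for the periods (in \cref{lemma: blowing up rp2}, obtaining $\tilde\mu_2=\tfrac h2-\mu-\tfrac c4$), and imposes positivity of the area of the exceptional class $\widetilde E_2$ via Li--Liu; the "if" direction is the almost toric construction. So your route (iii)--(v) is exactly the paper's, and your acknowledged "main obstacle" in step (iv) is precisely the content of \cref{lemma: blowing up rp2}.

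There is, however, a genuine error in your step (i). You claim the class of the $\mathbb{R}P^2$ is "the unique nonzero class" of $H_2(X_1;\mathbb{Z}_2)$ because $[\mathbb{R}P^2]$ "must be $2$-torsion." But $H_2(X_1;\mathbb{Z}_2)\cong\mathbb{Z}_2\oplus\mathbb{Z}_2$ has \emph{three} nonzero elements ($H$, $E$, and $H+E$ mod $2$), and every class with $\mathbb{Z}_2$-coefficients is $2$-torsion, so your argument pins down nothing. The paper's \cref{le:disjoint_cycles_RP2} instead invokes Audin's congruence: the Pontryagin square of the class of an embedded Lagrangian $\mathbb{R}P^2$ satisfies $[L]^2\equiv 1\pmod 4$, which rules out $E$ (square $\equiv 3$) and $H+E$ (square $\equiv 0$) and forces $[L]=H$. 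This step is not cosmetic: the identification $[L]=H$ is what determines the classes $2H$ and $2H-E$ admitting disjoint representatives, which in turn drives the entire period computation in \cref{lemma: blowing up rp2}. Your first, vaguer route for the "only if" direction (arranging a geometrically disjoint conic "after an isotopy" using "normal data") is not a proof as written and is not how the paper proceeds; the second route is correct once the homology class is fixed by Audin's criterion and the bookkeeping of \cref{lemma: blowing up rp2} is actually carried out.
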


Proving this theorem in \cref{subsec:Kro_q} will serve as an introduction to the rest of this paper in the sense that the main techniques used in this paper are contained within the proof of \cref{thrm:Kro_squeeze_rp2}.

\subsection{Organization}
The paper is organized as follows: In \cref{sec: Prelim} we set notation and recall the relevant properties of almost toric fibrations and Lagrangian pinwheels, along with a brief discussion about their self-intersection properties and their first Chern class.
In \cref{sec:ratblow} we review useful properties of rational and ruled symplectic $4$-manifolds, as well as how they behave under the rational blow-up. 
This section also contains a short proof that rationally blowing-up a positive symplectic manifold yields a positive symplectic manifold. 
We then demonstrate the above ideas to answer Kronheimer's question in \cref{subsec:Kro_q}. 
In \cref{Section-1-Construction} we use the theory of almost toric fibrations to construct Lagrangian pinwheels in rationally ruled surfaces. 
In addition we describe a useful compactification of the $B_{n,1}$ that we will use for proving the non-squeezing theorem. 
In \cref{sec:obstruction} we determine the manifold obtained by blowing up a liminal pinwheel in a rationally ruled surface.
This is the main input for the rest of the applications, immediately proving \cref{thrm:A,thrm:B}. 
Finally, \cref{sec:corollaries} contains the proofs of \cref{thrm:Kho,thrm:nonsqueezing}.

\subsection{Acknowledgments}

The authors would like to thank Jo\'{e} Brendel, Jonny Evans, Felix Schlenk, Joel Schmitz, Federica Pasquotto and George Politopoulos for many insightful and supportive discussions. 
In particular, we would like to thank Jonny Evans for inspiring most of the content of this paper as well as suggesting proofs of \cref{prop: discrepancies} and \cref{Lag_pinwheels_self_int} and Felix Schlenk for guiding us to the non-squeezing result proven in \cref{sec:corollaries}.
Finally, we would like to express our gratitude to our common MSc Thesis supervisor, Will Merry. 
This paper is surely part of his mathematical legacy and his outstanding teaching.

\section{Preliminaries}\label{sec: Prelim}

\subsection{Notation}

Denote the standard symplectic form on $S^2$ by $\omega_0$, where by standard we mean that $\omega_0$ gives $S^2$ area $1$.
For positive real numbers $a$ and $b$ define the split symplectic form $\omega_{a,b}\vcentcolon= a\omega_0 \oplus b\omega_0$ on $S^2\times S^2$.
Furthermore, define the two homology classes $A\vcentcolon= [S^2 \times \{pt\}]$ and $B\vcentcolon= [\{pt\} \times S^2]$, that form a basis of $H_2(S^2 \times S^2; \mathbb{Z})$.

The complex projective space equipped with the Fubini--Study form scaled such that the area of a complex line is equal to $h >0$ is denoted by $(\mathbb{C}P^2,\omega_h)$. 
The one-point blowup of $(\mathbb{C}P^2,\omega_h)$ of size $0<\mu<h$ will be denoted by $(X_1,\omega_{h,\mu})$. 
In general, the $k$-fold blowup of $(\mathbb{C}P^2,\omega_h)$ will be denoted by $(X_k,\omega_{h,\bm{\mu}})$, where $\bm{\mu}=(\mu_1,\ldots,\mu_k) \in \mathbb{R}_{>0}^k$ is the vector of weights of the blowups. 
The class of a complex line will be denoted by $H = [\mathbb{C}P^1] \in H_2(\mathbb{C}P^2;\mathbb{Z})$ and the classes defined by the exceptional divisors will be denoted by $E_i \in H_2(X_k;\mathbb{Z})$.

\subsection{Recollections of almost toric fibrations}
\label{subsec:atfs}

The constructive part of the proof of \cref{thrm:A} and \cref{thrm:B} is heavily based on the theory of almost toric fibrations, in the following abbreviated by ATFs, which was originally introduced in \cite{Sym02:Fourtwo}. 
ATFs proved to be a very powerful tool in four-dimensional symplectic geometry, especially for providing constructions. 
For example, Vianna used them in \cite{Via16} and \cite{Via17} to construct examples of exotic Lagrangian tori in $\mathbb{C}P^2$ and in monotone del Pezzo surfaces. 
Vianna's papers use sequences of mutations to construct exotic Lagrangian tori. 
In these sequences of mutations Lagrangian pinwheels naturally appear and the numerics of these pinwheels exhibit a stunning connection to the Markov equation.\footnote{For an introduction to these sequences of mutations compare \cite[Appendix I]{Ev23:Book}.} 
These sequences of mutations hint at surprising connections between algebraic geometry and symplectic geometry, which was worked out by Evans and Smith \cite{EvSm18}.

In our discussion of ATFs we will rely on the setup, the notation and the conventions in \cite{Ev23:Book} and \cite{Ev24:KIAS}.
Without going into the details, we recall crucial aspects of the theory and recall some calculations that play an important role in our construction. 
An ATF is given by a singular Lagrangian torus fibration on a four dimensional symplectic manifold, whose singularities are elliptic-regular, elliptic-elliptic or focus-focus type.
There are three essential operations in this context called \textit{nodal trade}, \textit{nodal slide} and \textit{mutation}. For more details on these operations compare \cite[Sections 8.1,8.2 and 8.3]{Ev23:Book}. 
We will quickly introduce the latter two in order to fix some notation. 

The most important operation in our construction are so called \textit{mutations} of almost toric base diagrams. 
Assume that $\Delta \subseteq \mathbb{R}^2$ is an almost toric base diagram decorated with the positions of the base-nodes together with branch cuts emanating from each base-node in the direction of the eigenline of the monodromy associated to the respective base-node, connecting the base-node to the toric boundary. 
Suppose that $x\in \Delta$ is a base node with associated eigenline pointing in the $(p,q)$-direction, where $(p,q)$ is a primitive integral eigenvector of the monodromy. 
In this case the clockwise monodromy is given by the matrix 
\begin{equation*}
    M(p,q)\vcentcolon=
    \left(
        \begin{matrix}
        1-pq & -q^2\\
        p^2 & 1+pq
        \end{matrix}
    \right).
\end{equation*} 

The line $\langle(p,q)\rangle$ bisects the almost toric base diagram into two pieces $\Delta_1$ and $\Delta_2$, where $\Delta_1$ lies clockwise of the branch cut. 
Rotating the branch cut by $180$ degrees in the anticlockwise direction has the effect of replacing $\Delta_2$ by $\Delta_2 M(p,q)^{-1}$ to obtain the new almost toric base diagram $\Delta_1 \cup \Delta_2 M(p,q)^{-1}$. 
This manipulation, that neither effects the underlying symplectic manifold nor the ATF defined on it, is called a \textit{mutation}.\footnote{Note that Evans uses a slightly different language in the sense that in \cite{Ev23:Book} a mutation consist of a nodal slide combined with a "change of branch cut".}
In \cref{fig:mutation} an example of a mutation is illustrated, which will appear again and again in our construction.\footnote{Evans and Urz\'ua introduce exactly this configuration in \cite{EvaUrz21} for the purpose of finding explicit embeddings of Lagrangian pinwheels in certain algebraic surfaces. This configuration is only a specific case of the much more general configuration they are considering.}

The other manipulation of almost toric base diagrams that we will use are \textit{nodal slides}. 
A nodal slide is a family of ATFs in which a base-node is moved along the direction defined by the branch cut that is emanating from the base-node. 
Recall that sliding a base-node along the branch cut line can be arranged in such a way that the ATF is only changed in an arbitrarily small, contractible neighborhood of the segment along which the base-node moved. 
Note that this does not change the underlying symplectic manifold. However, it changes the almost toric fibration.

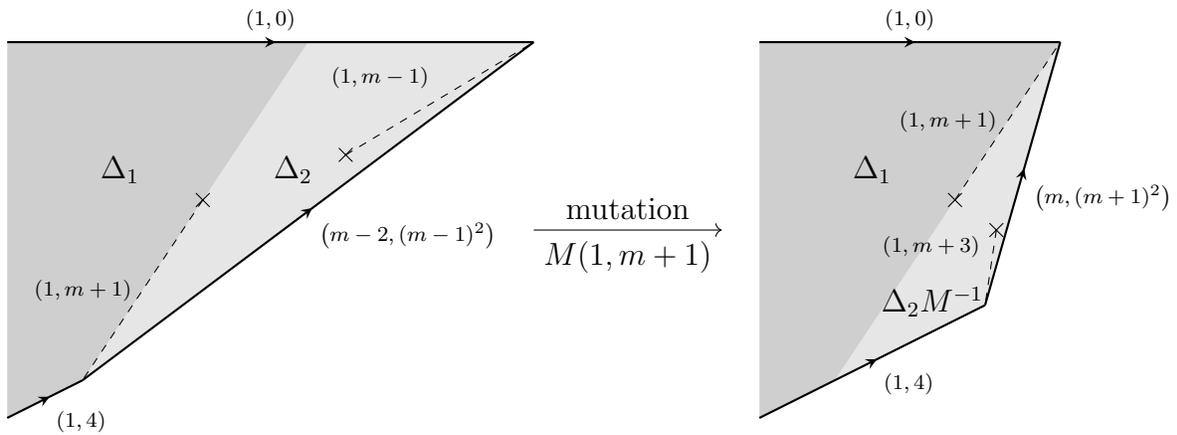
\begin{figure}[ht]
  \centering
  \begin{tikzpicture}
    \begin{scope}[shift={(-3.5,-2.5)}]
        \fill[opacity=0.1] 
            (0,5) -- (0,0) -- (1,0.5) -- (7,5);
        \fill[opacity=0.1] 
            (0,5) -- (0,0) -- (1,0.5) -- (4,5);
        \draw[thick, mid arrow] 
            (0,0) -- node[anchor=north west] {\tiny $(1,4)$} 
            (1,0.5);
        \draw[thick, mid arrow]
            (1,0.5) -- node[anchor=north west] {\tiny $\big(m-2,(m-1)^2\big)$} 
            (7,5);
        \draw[thick, mid arrow]
            (0,5) -- node[anchor=south] {\tiny $(1,0)$}
            (7,5);
        \node 
            at (1.5,3.3) {$\Delta_1$};
        \node 
            at (3.8,3.3) {$\Delta_2$};
        \draw[dashed] 
            (2.6,2.9) node[cross] {} -- node[anchor=east] {\tiny $(1,m+1)$}(1,0.5);
        \draw[dashed] 
            (4.5,3.5) node[cross] {} -- node[anchor=south east] {\tiny $(1,m-1)$}(7,5);
    \end{scope}
    
    \draw[->] (3.5,0) -- node[anchor=south] {mutation} node[anchor=north] {$M(1,m+1)$} (6,0);
    
    \begin{scope}[shift={(6.5,-2.5)}]
        \fill[opacity=0.1] 
            (0,5) -- (0,0) -- (3,1.5) -- (4,5);
        \fill[opacity=0.1] 
            (0,5) -- (0,0) -- (1,0.5) -- (4,5);
        \draw[thick, mid arrow] 
            (0,0) -- node[anchor=north west] {\tiny $(1,4)$} 
            (3,1.5);
        \draw[thick, mid arrow]
            (3,1.5) -- node[anchor=north west] {\tiny $\big(m,(m+1)^2\big)$} 
            (4,5);
        \draw[thick, mid arrow]
            (0,5) -- node[anchor=south] {\tiny $(1,0)$}
            (4,5);
        \node at (1.5,3.3) {$\Delta_1$};
        \node at (2.3,1.55) {$\Delta_2M^{-1}$};
        \draw[dashed] 
            (2.6,2.9) node[cross] {} -- node[anchor=east] {\tiny $(1,m+1)$}(4,5);
        \draw[dashed] 
            (3.15,2.5) node[cross] {} -- node[anchor=south east] {\tiny $(1,m+3)$}(3,1.5);
    \end{scope}

  \end{tikzpicture}
  \caption{For $m \in \mathbb{N}_{>1}$ two almost toric base diagrams related by a mutation at the lower base-node of almost toric base diagram on the left. 
  A calculation, that is easily reproduced, yields all the directions involved on the right hand side.}
  \label{fig:mutation}
\end{figure}

\subsection{Lagrangian Pinwheels}\label{Subsection_Lagrangian_Pinwheels}

Let us recall some notions and definitions around Lagrangian pinwheels, following \cite{BrSch24}. Compare also the discussions in \cite{Kho13:Sympratblo} and \cite{EvSm18}. 

\begin{definition}
    A $p$-pinwheel, for $p\in \mathbb{N}_{\geq 2}$, is the topological space obtained by gluing a $2$-cell to a circle along the map $z\rightarrow z^{p}$.
\end{definition}

Pinwheels where first studied by Khodorovskiy in \cite{Kho13:Sympratblo}, where pinwheels were studied as the Lagrangian skeleta of certain Stein manifolds.
These are the rational homology balls $B_{p,q}$, for relatively prime integers $p$ and $q$, carrying a standard symplectic form $\omega_{std}$. 
The contact boundary of these rational homology balls is the Lens space $L(p^2,pq-1)$. 

Another point of view that is related to this paper is Evans' exposition in \cite{Ev24:KIAS}. 
There, the rational homology balls $B_{p,q}$ appear as Milnor fibres of the cyclic quotient singularity $\frac{1}{p^2}(1,pq-1)$. 
From this point of view, the link of this singularity is the lens space ${L(p^2,pq-1)}$ and the Lagrangian $(p,q)$-pinwheel appears as the vanishing cycle. 
In these notes Evans also explains how viewing the rational homology balls as smoothings of cyclic quotient singularities translates into almost toric geometry. 
The almost toric base diagram, which we will denote by $\Delta_{p,q}$, that corresponds to the rational homology ball $B_{p,q}$ is depicted in \cref{fig:atfrationalhomologyball}. 
The associated link, i.e.\ the lens space, lives over a horizontal line away from the branch cut, and the vanishing cycle, i.e.\ the Lagrangian $(p,q)$-pinwheel, is a visible Lagrangian above the branch cut.

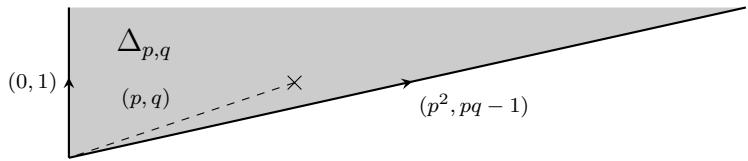
\begin{figure}[ht]
  \centering
  \begin{tikzpicture}
    \begin{scope}
        \fill[opacity=0.2] 
            (0,2) -- (0,0) -- (9,2);
        \draw[thick, mid arrow] 
            (0,0) -- node[left] {\tiny $(0,1)$} 
            (0,2);
        \draw[thick, mid arrow]
            (0,0) -- node[anchor=north west] {\tiny $(p^2,pq-1)$} 
            (9,2);
        \node at (1,1.5) {$\Delta_{p,q}$};
        \draw[dashed] 
            (3,1) node[cross]{} -- node[anchor=south east] {\tiny $(p,q)$} (0,0);
    \end{scope}
  \end{tikzpicture}
  \caption{The almost toric base diagram $\Delta_{p,q}$.}
  \label{fig:atfrationalhomologyball}
\end{figure}

The first concrete and general definition of Lagrangian $L_{p,q}$-pinwheels appeared in \cite[Definition 2.3]{EvSm18}, as certain Lagrangian immersions of discs, extending the one given in \cite[Definition 3.1]{Kho13:Sympratblo} for $L_{n,1}$-pinwheels.
Khodorovskiy also proved in \cite[Section 3.3]{Kho13:Sympratblo} that Lagrangian $L_{n,1}$-pinwheels admit standard symplectic neighborhoods, in the same way that usual Lagrangian submanifolds do.
In fact, her proofs extend directly to the more general Lagrangian $L_{p,q}$-pinwheels.
We will instead follow Brendel and Schlenk \cite[Section 1]{BrSch24} and use the following equivalent definition:

\begin{definition}\label{def:Lag_pinwheels}
    In a symplectic four-manifold $(X,\omega)$, a Lagrangian $(p,q)$-pinwheel, denoted by $L_{p,q}$, is a $p$-pinwheel that admits an open neighborhood symplectomorphic to an open neighborhood of the Lagrangian skeleton of the rational homology ball $B_{p,q}$.
\end{definition}

\begin{remark}\label{remark_pinwheel=qhb}
    We will sometimes blur the distinction between embeddings of Lagrangian $L_{p,q}$-pinwheels and of the associated rational homology balls since one implies the other and we will often only care about existence of the embeddings, as for example in \cref{thrm:Kho}. 
    On the other hand, the existence of Lagrangian $L_{p,q}$-pinwheels does not give any quantitative information about the embeddings of the rational homology balls, for example how big or small the $B_{p,q}$ may be.

    Moreover, in the following by an $L_{p,q}$-pinwheel, we will always mean a Lagrangian $L_{p,q}$-pinwheel.
\end{remark}

\begin{example}
    For $(p,q)=(2,1)$ the corresponding $L_{2,1}$-pinwheel is just a Lagrangian~$\mathbb{R}P^2$, and $B_{2,1}$ is symplectomorphic to the unit cotangent disc bundle of $\mathbb{R} P^2$. 
Note that in this case we are talking about honest embeddings. 
\end{example}

We give the pinwheels that we are concerned with a special name.

\begin{definition}\label{def:liminal_pinwheel}
A \textbf{liminal pinwheel} is a $L_{n,1}$-pinwheel which satisfies the following properties:
\begin{enumerate}
    \item 
        If $n=2k$, then $L_{2k,1}$ lives in $X_1$ and represents the $\mathbb{Z}_{2k}$-homology class $kH+(k+1)E$. 
        Furthermore, $L_{2k,1}$ is disjoint from a smooth sphere either in the $\mathbb{Z}$-class $(k+1)H-kE$ or in the $\mathbb{Z}$-class $2H$.
    \item 
        If $n=2k+1$, then $L_{2k+1,1}$ lives in $S^2\times S^2$ and represents the $\mathbb{Z}_{2k+1}$-homology class $A+kB$. 
        Furthermore, $L_{2k+1,1}$ is disjoint from a smooth sphere either in the $\mathbb{Z}$ class $A+(k+1)B$ or in the $\mathbb{Z}$ class $2A+B$.
\end{enumerate}
\end{definition}

\begin{remark}
    The adjective \textit{liminal} refers to the fact that these pinwheels essentially behave like the visible pinwheels we construct in \cref{Section-1-Construction}. 
    So in a sense they are "almost visible" pinwheels.
    The introduction and proofs in \cref{sec:obstruction} will make the relevance of the definition clear.

    However, it is unclear to us, if every $L_{n,1}$-pinwheel, with homological data as in \cref{def:liminal_pinwheel} is liminal.\footnote{It is not hard to see, which means that is follows from a lengthy but easy computation, that, for example, a Lagrangian $\mathbb{R}P^2$ in $X_1$ has to intersect all embedded spheres representing the classes $(k+1)H-kE$ for $k\geq2$, which are in the orthogonal complement of the liminal $L_{2k,1}$-pinwheels. This follows by using ideas similar to the ones used in the proof of \cite[Proposition 1.4]{BoLiWu13}, relying on a topological result of Kervaire and Milnor \cite{KeMi61}.}
\end{remark}

The definition of Lagrangian pinwheels we use, i.e. \cref{def:Lag_pinwheels}, is directly connected to the Weinstein-type theorem that Khodorovskiy proves in \cite[Lemma 3.4.]{Kho13:Sympratblo}.\footnote{Note that the definition we use is is really tailored towards talking about embedding of pairs $(B_{p,q},L_{p,q})$. There are slightly more general definition of Lagrangian pinwheel embeddings. For more details on this compare \cite[Section 2]{EvSm18}. However, for the purpose of this paper these definitions are equivalent in the sense that our theorems hold no matter which definition is used.}
This suggests that there should be a topological obstruction to find pinwheel embeddings in symplectic manifolds by ways of the identification of the tangent bundle with the normal bundle on the smooth part of the pinwheel embeddings, similar to the usual equation $\chi(L)=-[L]\cdot[L]$ for a closed oriented Lagrangian submanifold $L$ of a symplectic $4$-manifold. 
The following lemma gives the analogue to this equation.

\begin{lemma}\label{Lag_pinwheels_self_int}
    Suppose that $(X^4,\omega)$ contains an $L_{p,q}$-pinwheel. Denote by $[L] \in H_2(X;\mathbb{Z}_p)$ its homology class. Then,
    \begin{align*}
        [L]\cdot[L] \equiv -1 \pmod{p}.
    \end{align*}
    \begin{proof}
        By definition the $L_{p,q}$-pinwheel admits an open neighborhood in $X$ that is symplectomorphic to an open neighborhood of the Lagrangian skeleton of the rational homology ball $B_{p,q}$. 
        Since the self-intersection can be computed locally we may compute the self-intersection number within the rational homology ball.
        Consider the almost toric base diagram $\Delta_{p,q}$ shown in \cref{fig:atfrationalhomologyball_self_int}. 
        The $L_{p,q}$-pinwheel is a visible Lagrangian living over the branch cut and therefore projects to the red segment shown in the almost toric base diagram. 
        
        Now we define a configuration defining the same homology class as $L_{p,q}$. 
        Consider a configuration of line segments as illustrated in \cref{fig:atfrationalhomologyball_self_int} in orange.
        One of the two pieces is a vertical segment and the other piece is parallel to the branch cut, that points in the $(p,q)$-direction. 
        The segment terminating at the node is the projection of a disk, that is obtained by transporting the vanishing cycle along the line segment. 
        This means that intersecting a smooth fibre over the segment with the disk defines the $(-p,q)$-class. Note that this is not a visible Lagrangian, but still a visible surface in the sense of Symington \cite[Section 7]{Sym02:Fourtwo}. 
        The other segment parallel to the branch cut defines a Lagrangian $(p,q)$-pinwheel core.
        \begin{figure}[ht]
            \centering
            \begin{tikzpicture}
                \begin{scope}
                \fill[opacity=0.2] 
                    (0,2) -- (0,0) -- (9,2);
                \draw[thick, mid arrow] 
                    (0,0) -- node[anchor=east] {\tiny $(0,1)$} (0,2);
                \draw[thick, mid arrow]
                    (0,0) -- node[anchor=north west] {\tiny $(p^2,pq-1)$} 
                    (9,2);
                \draw[thick, red] 
                    (0,0) node[circlecross] {} -- (3,1);
                \draw[thick, orange] 
                    (0,0.5) node[circlecross] {} -- (3,1.5) -- (3,1);
                \draw[dashed] 
                    (3,1) node[cross] {} -- node[anchor=east] {\tiny $(p,q)$} (0,0);
                \end{scope}
            \end{tikzpicture}
            \caption{The configuration used to compute the self-intersection number of the visible pinwheel.}
        \label{fig:atfrationalhomologyball_self_int}
        \end{figure}
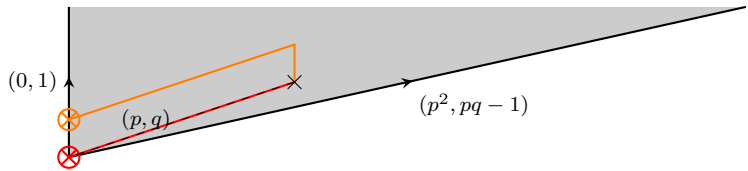
        Choosing the Lagrangian pinwheel core in an appropriate way the pinwheel core and the disk match over the intersection point of the two line segments. 
        This defines a $(p,q)$-pinwheel that is no longer Lagrangian but defines the same homology class as $L_{p,q}$, which follows from the fact that this configuration is homotopic to $L_{p,q}$. 
        This configuration and the visible $L_{p,q}$-pinwheel intersect only in the nodal fibre and the intersection is transverse. 
        Symington shows in \cite[Lemma 7.11]{Sym02:Fourtwo}, by using the local model around the node, that the sign associated to this intersection is $-1$.\footnote{Another way to immediately see this is by Picard--Lefschetz theory, because the above proof shows that computing the self-intersection number of these visible Lagrangian pinwheels is essentially the same as computing the self-intersection number of vanishing thimbles in Picard--Lefschetz theory, the difference being a sign change because of the difference between having a Lagrangian fibration and a holomorphic fibration.}
        This implies that 
        \begin{align*}
            [L]\cdot[L] \equiv -1 \pmod{p}.
        \end{align*}
    \end{proof}
\end{lemma}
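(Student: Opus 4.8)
The plan is to reduce the global statement to a local computation inside the rational homology ball $B_{p,q}$, exactly as the definition of a Lagrangian pinwheel allows. Since $L_{p,q}$ sits inside an open neighborhood symplectomorphic to a neighborhood of the Lagrangian skeleton of $B_{p,q}$, and the mod-$p$ self-intersection number of a torsion class can be computed in any open set containing a representing cycle, it suffices to produce, inside the almost toric picture $\Delta_{p,q}$ of \cref{fig:atfrationalhomologyball}, a second pinwheel-type cycle homologous to $L_{p,q}$ and to count its transverse intersections with the visible $L_{p,q}$.

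First I would recall that the visible Lagrangian $L_{p,q}$ projects to the branch cut segment pointing in the $(p,q)$-direction, emanating from the focus-focus node. Then I would construct the comparison cycle as a broken configuration of two visible surfaces: a short vertical segment from the toric boundary, carrying the vanishing cycle, which upon transport to the node sweeps out a disk whose boundary on a nearby fibre is the $(-p,q)$-class; and a segment parallel to the branch cut, carrying a Lagrangian $(p,q)$-pinwheel core. Choosing the pinwheel core compatibly, these two pieces glue along the common endpoint of the segments into a single (non-Lagrangian) $(p,q)$-pinwheel. The key homological input is that this broken configuration is homotopic to $L_{p,q}$ — this follows because the vanishing cycle is transported along a contractible path and the core piece is a deformation of the visible pinwheel — so it represents $[L] \in H_2(B_{p,q};\mathbb{Z}_p)$, hence $[L]\cdot[L]$ equals the count of signed transverse intersections between the two configurations.

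Next I would observe that the branch cut and the auxiliary vertical-plus-parallel configuration, chosen generically, meet only over the nodal fibre (the two parallel segments do not intersect, and the vertical segment meets the branch cut only at the node), and that this intersection is a single transverse point. The sign of this intersection is the crux: here I invoke Symington's local analysis of the focus-focus model \cite[Lemma 7.11]{Sym02:Fourtwo}, which shows the vanishing thimble through a node contributes $-1$ to the relevant intersection pairing. One may equally cite Picard--Lefschetz theory: the self-intersection of a vanishing thimble is $-1$ in the holomorphic setting, and the Lagrangian fibration produces the same count up to the standard sign adjustment. Combining, $[L]\cdot[L] \equiv -1 \pmod p$.

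The main obstacle is the sign bookkeeping at the node. Everything before it — the locality reduction, the identification of the homology class of the broken configuration, the transversality and the count of intersection points — is routine once the almost toric dictionary is in place; but pinning down that the single intersection over the nodal fibre carries sign $-1$ rather than $+1$ requires either a careful appeal to the explicit focus-focus model or to the Picard--Lefschetz sign conventions, and getting the orientation conventions consistent between the Lagrangian-fibration picture and the holomorphic picture is where care is needed.
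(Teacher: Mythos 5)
Your proposal is correct and follows essentially the same route as the paper's proof: reduction to the local model $\Delta_{p,q}$, the broken orange configuration of a transported vanishing-cycle disk plus a parallel pinwheel core, a single transverse intersection over the nodal fibre, and the sign $-1$ via Symington's \cite[Lemma 7.11]{Sym02:Fourtwo} (with the Picard--Lefschetz alternative the paper also notes). No substantive differences.
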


\begin{remark}\label{remark:pinwheels_soft_obstruction}
    Note that \cref{Lag_pinwheels_self_int} when combined with in \cite[Lemma 2.13]{EvSm18} gives that if $L$ is an $L_{p,q}$-pinwheel in a symplectic $4$-manifold $(X,\omega)$ it needs to satisfy
    \[[L]^2=-1 \mod p \quad\text{and}\quad c_1(X)([L])=q \mod p.\]
    These constraints often provide useful soft obstructions to symplectic embeddings of rational homology balls. 
    For example, they can be used to show that many, and possibly all, of the smooth embeddings constructed by Owens in \cite{Ow20:nonsymp} cannot be made symplectic, without appealing to the deep theorem about Markov numbers of Evans--Smith \cite{EvSm18}. 
    Applied in a different context, they also show that any $L_{p,q}$-pinwheel in $\mathbb{C} P^2$ needs to satisfy $q^2=-9 \mod p$ without any appeal to number theoretic techniques, as was done in \cite{EvSm18}.
    We plan to give more interesting and precise applications of these obstructions in future work. 
\end{remark}

Surprisingly, Lemma \ref{Lag_pinwheels_self_int} is the key input to determine when a homology class has a disjoint representative from a an $L_{p,q}$ pinwheel. 
See \cref{Appendix:orthogonal complement} for more details.

\section{Rational blow-ups on rational and ruled surfaces}
\label{sec:ratblow}

\subsection{Rational and ruled symplectic surfaces}

Rational symplectic surfaces are smooth symplectic $4$-manifolds $(X,\omega)$ where $X$ is smoothly diffeomorphic to $S^2\times S^2$ or $X_k=\mathbb{C} P^2\#k \overline{\mathbb{C} P^2}$. 
In \cite{Gr85} Gromov initiated the study of the symplectic topology of rational symplectic surfaces, studying $\mathbb{C} P^2$ and the monotone $(S^2\times S^2,\omega_0 \oplus \omega_0)$. 
Since then, his results have been vastly generalized in various directions, mainly through the developments of Taubes--Seiberg--Witten theory. 
For general introductions to rational and ruled surfaces in symplectic topology compare \cite{Wen18:rationalbook,FrMcD96:classrat,FrMcD96:classrat_book,McDSal16:Intro_book, McD:rat_rul}.

\begin{theorem}[For example \cite{McDSal16:Intro_book}]\label{thrm:symp_forms_on_rational}
    Let $\omega_1,\omega_2$ be two symplectic forms on a rational manifold $X$. 
    If $[\omega_1]=[\omega_2]$ as elements of $H_{dR}^2(X;\mathbb{R})$, then $(X,\omega_1)$ and $(X,\omega_2)$ are diffeomorphic. The same holds for ruled symplectic surfaces.
\end{theorem}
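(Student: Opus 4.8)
The assertion I will prove is that $(X,\omega_1)$ and $(X,\omega_2)$ are symplectomorphic, i.e.\ that there is a diffeomorphism $\phi\colon X\to X$ with $\phi^*\omega_2=\omega_1$ (since the smooth manifold $X$ is fixed from the outset, this is the only non-trivial reading of the conclusion); it is the classical uniqueness of symplectic forms in a fixed cohomology class on rational and ruled $4$-manifolds. The plan is to proceed in two steps: first to join $\omega_1$ to $\omega_2$ through a path of \emph{cohomologous} symplectic forms, and then to convert such a path into the desired symplectomorphism by Moser's stability theorem.

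For the first step I would invoke the structural classification of symplectic forms on rational and ruled surfaces. The input is that, since $b^+(X)=1$ and $X$ is rational or ruled, Taubes--Seiberg--Witten theory (equivalently, the nonvanishing of the relevant Gromov--Witten invariant) provides, for every symplectic form $\omega$ on $X$, an $\omega$-symplectic sphere of non-negative self-intersection; McDuff's structure theorem then realizes $(X,\omega)$, after blowing down the symplectic exceptional spheres, as a symplectic $S^2$-bundle over $S^2$ (or as $\mathbb{C}P^2$), and Lalonde--McDuff classify the symplectic forms on such bundles. Running through this classification shows that any two cohomologous symplectic forms on $X$ are \emph{deformation equivalent}, and McDuff's theorem "from symplectic deformation to isotopy" then upgrades deformation equivalence together with equality of cohomology classes to an \emph{isotopy}: a smooth family $\{\sigma_t\}_{t\in[0,1]}$ of symplectic forms with $\sigma_0=\omega_1$, $\sigma_1=\omega_2$, and $[\sigma_t]\in H^2_{dR}(X;\mathbb{R})$ independent of $t$. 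All of this is assembled in \cite{McDSal16:Intro_book} and \cite{McD:rat_rul}, so in the write-up it suffices to cite these; the ruled (non-rational) case is handled identically, with Lalonde--McDuff's classification of symplectic forms on ruled surfaces replacing the rational-surface input.

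The second step is the routine Moser argument: since $\frac{d}{dt}[\sigma_t]\equiv 0$ we may write $\dot\sigma_t=d\alpha_t$ for a smooth family of $1$-forms $\alpha_t$, define the time-dependent vector field $V_t$ by $\iota_{V_t}\sigma_t=-\alpha_t$ (using nondegeneracy of $\sigma_t$, and completeness of the flow since $X$ is closed), and check that its flow $\phi_t$ satisfies $\phi_t^*\sigma_t=\sigma_0$; taking $t=1$ gives $\phi\vcentcolon=\phi_1$ with $\phi^*\omega_2=\omega_1$.

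The main obstacle lies entirely in the first step, and it is imported wholesale from the literature: it rests on the deep pseudoholomorphic-curve analysis behind the structure theorems for rational and ruled symplectic $4$-manifolds (existence of symplectic ruling spheres, classification of forms on $S^2$-bundles over $S^2$) and on McDuff's deformation-to-isotopy theorem, whose proof uses inflation along embedded symplectic spheres and tight control of moduli spaces of $J$-holomorphic spheres in dimension four. The only subtlety on our side is to remember that Moser's argument genuinely needs the cohomology-constant family, not merely a deformation, so the deformation-to-isotopy upgrade cannot be skipped.
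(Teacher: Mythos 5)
Your proposal is correct and follows exactly the standard argument that the paper implicitly relies on by citing \cite{McDSal16:Intro_book}: the paper offers no independent proof, and your outline (Taubes--Seiberg--Witten existence of a non-negative symplectic sphere, McDuff's structure theorem and the Lalonde--McDuff classification, deformation-to-isotopy, then Moser) is precisely the chain of results behind that citation. Your reading of "diffeomorphic" as the existence of a diffeomorphism pulling back one form to the other is also the intended one.
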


This makes symplectic rational manifolds easy to work with, since $(X,\omega)$ depends only on the smooth topology of $X$ and the cohomology class $[\omega] \in H^2_{dR}(X;\mathbb{R})$.

Rational and ruled symplectic surfaces, or symplectic Hirzebruch surfaces, are the symplectic rational surfaces $(X,\omega)$ where $X$ is diffeomorphic to $S^2\times S^2$ or $X_1$, or, equivalent, the rational surfaces with second Betti number $b_2=2$. 
A direct application of Theorem \ref{thrm:symp_forms_on_rational} tells us that there are model symplectic forms on rational and ruled surfaces.

\begin{lemma}
Let $(X^4,\omega)$ be a rational and ruled symplectic manifold.
\begin{enumerate}
    \item 
        If $X=S^2\times S^2$ then $\omega$ diffeomorphic to $\omega_{a,b}$, the product symplectic form giving the product factors area $a>0$ and $b>0$ respectively.
    \item 
        If $X=X_1$ then $\omega$ is diffeomorphic to $\omega_{h,\mu}$, the form giving the line and the exceptional divisor areas $h$ and $\mu$ respectively.\footnote{Note that there is the volume constraint $h>\mu$.}
\end{enumerate}
\end{lemma}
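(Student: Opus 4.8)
The plan is to derive both normal forms directly from \cref{thrm:symp_forms_on_rational}, using the classical computation of the symplectic cone of each of the two rational ruled surfaces. The key observation is that \cref{thrm:symp_forms_on_rational} reduces the statement to a purely cohomological problem: it suffices to exhibit, for every symplectic form $\omega$ on $X = S^2 \times S^2$ (respectively $X = X_1$), a model form of the claimed type whose de Rham class equals $[\omega]$; the theorem then supplies the diffeomorphism.

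For the case $X = S^2 \times S^2$, I would write $[\omega] = aA^\vee + bB^\vee$ for real numbers $a,b$, where $A^\vee, B^\vee$ is the basis of $H^2_{dR}$ dual to $A, B$. The constraint $\int_{S^2 \times S^2}\omega \wedge \omega > 0$ forces $ab > 0$, and replacing $\omega$ by $-\omega$ if necessary (or using the orientation fixed by $\omega$) we may assume $a, b > 0$; here one should also invoke the light cone lemma / the fact that $[\omega]$ lies in the positive component, so that the pairing of $[\omega]$ with a symplectic sphere in class $A$ or $B$ is positive, pinning down the signs. The split form $\omega_{a,b} = a\omega_0 \oplus b\omega_0$ has exactly the class $aA^\vee + bB^\vee$, so $[\omega] = [\omega_{a,b}]$ and \cref{thrm:symp_forms_on_rational} finishes part (1). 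For $X = X_1 = \mathbb{C}P^2 \# \overline{\mathbb{C}P^2}$, I would write $[\omega] = hH^\vee - \mu E^\vee$; positivity of $[\omega]^2$ gives $h^2 - \mu^2 > 0$, and pairing with the symplectic line class $H$ and the exceptional sphere class $E$ (both of which carry embedded symplectic representatives whose $\omega$-area is positive, by Taubes--Seiberg--Witten theory on rational surfaces) gives $h > 0$, $\mu > 0$, hence the stated volume constraint $h > \mu$. The one-point blow-up form $\omega_{h,\mu}$ of $(\mathbb{C}P^2, \omega_h)$ with weight $\mu$ realizes precisely this class, so again \cref{thrm:symp_forms_on_rational} concludes.

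The main obstacle — and the only place where anything nontrivial is used — is establishing the sign conditions $a, b > 0$ and $h, \mu > 0$, i.e.\ ruling out the ``wrong orientation'' classes in the symplectic cone. This is where one needs the input that the relevant homology classes ($A$, $B$, $H$, $E$) are represented by embedded symplectic spheres for \emph{every} symplectic form on these manifolds (equivalently, a description of the symplectic cone), which rests on Taubes--Seiberg--Witten theory and McDuff's work on rational and ruled surfaces, already cited in the excerpt. Given those facts the argument is routine bookkeeping, so I would state it briefly, citing \cite{McDSal16:Intro_book, McD:rat_rul, FrMcD96:classrat} for the cone computation and leaning on \cref{thrm:symp_forms_on_rational} for the passage from cohomology classes to diffeomorphism types. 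I would also remark that ``$\omega$ is diffeomorphic to $\omega_{a,b}$'' is shorthand for ``there is a diffeomorphism $\varphi$ of $X$ with $\varphi^*\omega_{a,b} = \omega$'', which is exactly the conclusion \cref{thrm:symp_forms_on_rational} provides.
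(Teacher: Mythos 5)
Your proposal is correct and follows exactly the route the paper intends: the paper offers no written proof, simply calling the lemma ``a direct application'' of \cref{thrm:symp_forms_on_rational}, and your argument supplies precisely the missing bookkeeping (expressing $[\omega]$ in the standard basis, pinning down the signs via the symplectic cone computation of Li--Liu/McDuff, and then invoking the cohomologous-implies-diffeomorphic statement). You also correctly isolate the only genuinely nontrivial input, namely the positivity of $\omega$ on the classes $A,B$ (resp.\ $H,E$), which rests on the Taubes--Seiberg--Witten description of the symplectic cone cited in the paper.
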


\begin{remark}
Another way to rephrase the above lemma is that any symplectic form on $S^2\times S^2$ or $X_1$ makes the ambient symplectic manifold have the structure of a symplectic $S^2$-fibration over $S^2$. 
The "ruled" adjective comes from the $S^2$-fibration property, whereas the "rational" adjective comes from the fact that $S^2$ is the base space.
\end{remark} 

Therefore statements on the symplectic topology of rational and ruled symplectic surfaces can be reduced to the study of $(S^2\times S^2,\omega_{a,b})$ and $(X_1,\omega_{h,\mu})$.

Rational and Ruled symplectic surfaces can be constructed by considering a \textit{standard symplectic disc bundle} $(E,\tau)$ over $(S^2,\omega_0)$ and then compactifying the total space of the unit disc bundle of $E$ via symplectic reduction.
This is explained thoroughly in \cite{Bi01:Lagbar}, where the notion of a standard symplectic disc bundle was introduced.\footnote{A toric point of view of this construction can be found in \cite{Ev23:Book}.} 
By Weinstein's neighborhood theorem, standard symplectic disc bundles are models for the neighborhood of symplectic submanifolds and provide convenient ways to describe them. 
When $E\rightarrow S$ has Euler class $n$, then the total space of this line bundle is exactly $V_{n}$ in Khodorovskiy's terminology in \cite{Kho14:Embratball}.

\begin{lemma}\label{lemma:compactification}
Let $(V_{-n-1},\tau)$ be a neighborhood of an $(-n-1)$-symplectic sphere $S$, where the normal disc has area $f$ and the zero-section has area $s$. 
Let $(X,\omega)$ be the rational and ruled symplectic manifold obtained by compactifying via a symplectic cut. 
Then
\begin{enumerate}
    \item 
        if $n=2k$ then $(X,\omega)$ is symplectomorphic to $(X_1,\omega_{h,\mu})$ with $f=h-\mu$ and $s=-kh+(k+1)\mu$. 
        In particular $\mu>\frac{k}{k+1}h$.
    \item 
        if $n=2k+1$ then $(X,\omega)$ is symplectomorphic to $(S^2\times S^2,\omega_{a,b})$. 
        Assuming that $[S]=A-(k+1)B\in H_2(S^2\times S^2,\mathbb{Z})$ we have $b=f$, $a=s+(k+1)f$ and, in particular, $\frac{a}{k+1}>b$. 
\end{enumerate}
\end{lemma}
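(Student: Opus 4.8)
The plan is to bring $(V_{-n-1},\tau)$ into standard form, close it up by a symplectic cut to obtain a symplectic $S^2$-fibration over $S^2$, and then recover the cohomology class of the resulting ruled surface from the areas of a fibre and of the section $S$, using the adjunction formula to pin down the relevant homology classes.

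By the Weinstein neighbourhood theorem, $(V_{-n-1},\tau)$ is symplectomorphic to a standard symplectic disc bundle over $(S^2,\omega_0)$ (see \cite{Bi01:Lagbar}, and \cite{Ev23:Book} for the toric picture), with zero-section $S$ of area $s$, Euler number $-(n+1)$, and fibre discs of area $f$. The symplectic cut along the boundary collapses the bounding circle of every fibre disc and turns $V_{-n-1}$ into a symplectic $S^2$-bundle $X\to S^2$ whose projection extends the disc-bundle projection. Cutting a disc of area $f$ at its boundary circle yields a sphere of area $f$, so every fibre $F$ of $X$ is a symplectic sphere of area $f$; the zero-section $S$ is still a symplectic sphere of area $s$; and $[S]^2=-(n+1)$, $[F]^2=0$, $[S]\cdot[F]=1$, so $\{[S],[F]\}$ is a $\mathbb{Z}$-basis of $H_2(X;\mathbb{Z})$. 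Hence $[\omega]$ is completely determined by the two numbers $\omega\cdot[S]=s$ and $\omega\cdot[F]=f$.

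Since $X$ carries an $S^2$-fibration over $S^2$, it is diffeomorphic to $S^2\times S^2$ when $n+1$ is even and to $X_1$ when $n+1$ is odd. Suppose first that $n=2k$, so $X$ is diffeomorphic to $X_1$. As every symplectic form on $X_1$ is diffeomorphic to some $\omega_{h,\mu}$ by \cref{thrm:symp_forms_on_rational}, there is a symplectomorphism $\phi\colon(X,\omega)\to(X_1,\omega_{h,\mu})$. The spheres $\phi(S)$ and $\phi(F)$ are embedded symplectic spheres in $X_1$ of self-intersections $-(2k+1)$ and $0$, so the adjunction formula forces $c_1(X_1)\cdot\phi_*[S]=1-2k$ and $c_1(X_1)\cdot\phi_*[F]=2$; together with the self-intersections, these conditions have the unique integral solutions $\phi_*[S]=(k+1)E-kH$ and $\phi_*[F]=H-E$. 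Therefore $f=\omega\cdot[F]=\omega_{h,\mu}\cdot(H-E)=h-\mu$ and $s=\omega\cdot[S]=\omega_{h,\mu}\cdot\bigl((k+1)E-kH\bigr)=-kh+(k+1)\mu$, which are the asserted relations; equivalently $h=s+(k+1)f$, $\mu=s+kf$. Suppose instead that $n=2k+1$, so $X$ is diffeomorphic to $S^2\times S^2$, and fix the identification so that $[S]=A-(k+1)B$. Then $[F]$ is a primitive class of self-intersection $0$ with $[F]\cdot[S]=1$ and positive $\omega$-area, which forces $[F]=B$; hence $f=\omega\cdot B=b$ and $s=\omega\cdot\bigl(A-(k+1)B\bigr)=a-(k+1)b$, i.e.\ $b=f$ and $a=s+(k+1)f$.

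Finally, the two displayed inequalities are nothing but the positivity of $s$: being a symplectic sphere, $S$ has $s=\omega\cdot[S]>0$, and substituting the formulas above gives $(k+1)\mu-kh=s>0$, i.e.\ $\mu>\frac{k}{k+1}h$, when $n=2k$, and $a-(k+1)b=s>0$, i.e.\ $\frac{a}{k+1}>b$, when $n=2k+1$. The step that needs the most care is the identification of the homology classes: self-intersection alone does not pin down a class — for instance both $(k+1)E-kH$ and $kH-(k+1)E$ square to $-(2k+1)$ — and it is the adjunction constraint coming from the symplectic nature of $S$ and $F$ (resp.\ the intersection number with the prescribed section class in the $S^2\times S^2$ case), together with the positivity of the area of $S$, that selects the correct classes and simultaneously produces the stated inequalities.
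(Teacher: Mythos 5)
Your proof is correct and follows essentially the same route as the paper: compactify to a symplectic $S^2$-fibration over $S^2$, identify the diffeomorphism type by the parity of $[S]^2$, pin down the classes $[S]$ and $[F]$ in the standard basis, and read off the period relations and the inequality from $s=\omega(S)>0$. The only difference is that you spell out the adjunction computation that forces $[S]$ and $[F]$ into the stated classes, a step the paper asserts without detail.
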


\begin{proof}
By construction the manifold $(X,\omega)$ admits a symplectic $S^2$-fibration over $S^2$ that has two distinguished symplectic submanifolds; the zero-section $S$ and the fibers of the fibration.
Assume that $F$ is such a fibre and that the areas are $\omega(S)=s$ and $\omega(F)=f$, respectively.

If $S\cdot S=-2k-2$ then the $S^2$-fibration admits a section with even self intersection and therefore $X$ is $S^2\times S^2$. 
Up to diffeomorphism, i.e.\ swapping $A$ and $B$, we can assume that $[F]=B$ and $[S]=A-(k+1)B$. 
Therefore $\omega$ is diffeomorphic to $\omega_{a,b}$ where $b=f$ and $a=s+(k+1)f$ and thus, since $s>0$ by virtue of $S$ being symplectic, we have also that $a>(k+1)b$.
Similarly, if $S\cdot S=-2k-1$ then $X$ is $X_1$. 
The only possibility is that $[F]=H-E$ and $[S]=-kH+(k+1)E$. 
Thus $(X,\omega)$ is symplectomorphic to $(X_1,\omega_{h,\mu})$ where $\mu=kf+s$ and $h=(k+1)f+s$. 
Since $s=-kh+(k+1)\mu$ and $s>0$, we also get that $kh<(k+1)\mu$.
\end{proof}

\begin{remark}
    Note that the above lemma can also be proven by ways of the toric interpretation as described in \cite{Ev23:Book}.
\end{remark}

\subsection{Blowing-up Lagrangian \texorpdfstring{$L_{n,1}$}{Ln,1}-pinwheels}\label{Section_Blowing_Up_Pinwheels}

As mentioned before, we will use the symplectic rational blow-up to show that the existence of certain $L_{n,1}$-pinwheels implies necessarily the inequalities for the symplectic form on $S^2\times S^2$ or $X_1$ claimed in \cref{thrm:A} and \cref{thrm:B}.

First we will recall the basics on how to perform the symplectic rational blow-up and respectively the blow-down. 
Then, we will determine the effect of the rational blow-up along an $L_{n,1}$-pinwheel, namely compute in which homology classes the spheres introduced via the blow-up live. 
Finally, we will leverage this information to infer the claimed inequalities.

The symplectic rational blow-up is a symplectic surgery introduced by Khodorovskiy in \cite{Kho13:Sympratblo} and is the inverse operation of a more classical procedure, the symplectic rational blow-down, introduced by Symington in \cite{Sym98:Ratblo}. 
For a streamlined exposition of these operations we refer to \cite[Chapter 9]{Ev23:Book}. 
Even though both operations are defined for general $L_{p,q}$-pinwheels, we will focus only on the case of $L_{n,1}$-pinwheels, which are considered in this paper.

The main idea of the rational blow-up comes from the observation that the rational homology ball $B_{n,1}$ has the same contact boundary as the Weinstein neighborhood of a certain configuration of symplectic spheres $C_n$, namely the lens space $L(n^2,n-1)$.
The configuration $C_n$ consists of a collection of $n-1$ symplectic spheres $S_i$ for $0\leq i\leq n-2$, with prescribed intersection pattern.
For $1\leq i\leq n-3$ the sphere $S_i$ intersects only $S_{i-1}$ and $S_{i+1}$ in exactly exactly one point respectively and this intersection is transverse.
Moreover, the self-intersection numbers of the spheres are given by $S_0^2=-(n+2)$ and $S_i^2=-2$ for $1\leq i\leq n-2$.\footnote{This observation is best explained through algebraic geometry. 
As mentioned in \cref{Subsection_Lagrangian_Pinwheels} the lens space $L(n^2,n-1)$ is the link of the cyclic quotient singularity $\frac{1}{n^2}(1,n-1)$. 
$B_{n,1}$ is the Milnor Fiber of this singularity, while $C_n$ is the resolving divisor of a minimal resolution of the singularity. 
This is explained in detail in \cite[Chapter 9]{Ev23:Book} along the lines of the original reference \cite{Sym98:Ratblo}}

\begin{definition}[Symplectic rational blow-up]
    Let $(X^4,\omega)$ be a $4$-dimensional symplectic manifold and suppose that $L_{n,1}$ is a Lagrangian pinwheel in $(X^4,\omega)$. 
    The symplectic manifold $(\widetilde{X},\widetilde{\omega})$, obtained by removing a small standard neighborhood of $L_{n,1}$ and gluing back a neighborhood of a specific configuration of symplectic spheres $C_n$, is called the symplectic $\textbf{rational blow-up}$ of $(X,\omega)$ along $L_{n,1}$.
\end{definition}

\begin{remark}
    By the nature of the symplectic rational blow-up, there is a symplectic identification of $\nu L_{n,1}$, the neighborhood of the Lagrangian pinwheel, and $\nu C_n$, the neighborhood of the symplectic spheres. 
    In other words, we may identify symplectically
    \[\big(X-\nu L_n,\omega\big)\simeq \big(\widetilde{X}-\nu C_n,\widetilde{\omega}\big).\]
\end{remark}

Expectedly, given a $C_n$ configuration in some $(\widetilde{X},\widetilde{\omega})$, one may perform the \textbf{symplectic rational blow-down} to replace it with an $L_{n,1}$-pinwheel.
Determining how surgeries affect manifolds can be a very subtle problem. 
In our case however, using heavy machinery of Seiberg-Witten theory we can, a priori, determine the resulting symplectic manifold.

\begin{theorem}[\cite{PaShi23}]
    Let $(X^4,\omega)$ be a rational symplectic manifold and let $L$ be an $L_{n,1}$-pinwheel therein. 
    Then, the symplectic manifold $(\widetilde{X},\widetilde{\omega})$ obtained by performing a rational blow-up along $L$ is again a rational symplectic manifold.
\end{theorem}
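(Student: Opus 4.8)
The plan is to combine elementary bookkeeping on the topology of the surgered manifold with the classification of symplectic $4$-manifolds with $b^+=1$.

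\emph{Step 1: the smooth topology of $(\widetilde X,\widetilde\omega)$.} The rational blow-up removes a neighborhood $\nu L_{n,1}$ of the pinwheel — a copy of the rational homology ball $B_{n,1}$, whose boundary is $L(n^2,n-1)$ and which has trivial rational homology — and glues back $\nu C_n$, a neighborhood of the linear plumbing $C_n$ of spheres with self-intersections $-(n+2),-2,\dots,-2$. Three remarks: (i) $C_n$ is a chain, so $\nu C_n$ is simply connected; (ii) the plumbing form of $C_n$ (a linear chain with all coefficients $\le-2$) is negative definite; (iii) $H_\ast(B_{n,1};\mathbb{Q})$ is trivial. From (ii)--(iii) a Mayer--Vietoris argument gives $b_1(\widetilde X)=b_1(X)=0$ and $b^+(\widetilde X)=b^+(X)=1$. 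From (i), writing $G=\pi_1(X\smallsetminus\nu L_{n,1})=\pi_1(\widetilde X\smallsetminus\nu C_n)$ and using that $X$ is simply connected, van Kampen applied to $X=(X\smallsetminus\nu L_{n,1})\cup_{L(n^2,n-1)}B_{n,1}$ shows the amalgamated product $G\ast_{\pi_1 L(n^2,n-1)}\pi_1(B_{n,1})$ is trivial, hence $G/\langle\langle\,\mathrm{im}\,\pi_1 L(n^2,n-1)\,\rangle\rangle$ is trivial; but the latter group is exactly $\pi_1(\widetilde X)=G\ast_{\pi_1 L(n^2,n-1)}\pi_1(\nu C_n)$ since $\pi_1(\nu C_n)=1$. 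So $\widetilde X$ is a closed, simply connected symplectic $4$-manifold with $b^+=1$.

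\emph{Step 2: reduction to Kodaira dimension $-\infty$.} By the classification of symplectic $4$-manifolds with $b^+=1$ (Taubes--Seiberg--Witten together with McDuff, Liu, Ohta--Ono, as organized by T.-J.\ Li), such a manifold has symplectic Kodaira dimension in $\{-\infty,0,1,2\}$, and it equals $-\infty$ precisely when the manifold is rational or ruled. A simply connected ruled surface has base $S^2$ and is therefore rational. Hence it suffices to show that the symplectic Kodaira dimension of $(\widetilde X,\widetilde\omega)$ is $-\infty$.

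\emph{Step 3: establishing $\kappa(\widetilde X)=-\infty$.} This is the crux, and there are two routes. The general route (the ``heavy Seiberg--Witten'' one behind \cite{PaShi23}) is to feed the fact that $X$, being rational, has $\kappa=-\infty$ into a Fintushel--Stern-type rational-blow-down formula for the Seiberg--Witten invariants of the $C_n$-surgery, and then apply T.-J.\ Li's characterization of $\kappa=-\infty$ via the (non)vanishing of the relevant wall-crossing / chamber invariants. The ``soft'' route, available under the positivity (liminality) hypothesis, is to exhibit inside $\widetilde X$ a symplectically embedded sphere $S$ with $S\cdot S\ge 0$: when the pinwheel is disjoint from such a sphere in $X$, that sphere lies in $X\smallsetminus\nu L_{n,1}=\widetilde X\smallsetminus\nu C_n$ and hence survives in $\widetilde X$, and McDuff's theorem that a symplectic sphere of non-negative self-intersection forces the ambient manifold to be rational or ruled finishes the argument (using Step 1 to exclude irrational ruled).

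\emph{Main obstacle.} Step 3 is where all the content lies. A purely homological argument cannot work: $\widetilde X$ is only \emph{homeomorphic} to a surface of the form $\mathbb{C}P^2\#m\overline{\mathbb{C}P^2}$, and such homeomorphism types are also realized by non-rational symplectic manifolds (exotic rational-surface-like examples with $\kappa\ge 0$), so one genuinely needs gauge-theoretic input in the general case. In the positive case the obstacle instead becomes producing the non-negative-square symplectic sphere disjoint from $L_{n,1}$ — which is exactly why the disjointness conditions enter the hypotheses of \cref{thrm:A} and \cref{thrm:B}.
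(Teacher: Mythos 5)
Your Steps 1 and 2 are sound (the Mayer--Vietoris and van Kampen bookkeeping is exactly what one does, and the reduction to Kodaira dimension $-\infty$ is a legitimate framing), but the proposal stops short of a proof precisely where you say the content lies. Route (a) of your Step 3 is not an argument --- it is a pointer to the gauge-theoretic proof in \cite{PaShi23}, which is the very result to be reproved. Route (b) has two gaps: first, the theorem as stated assumes nothing about the pinwheel being disjoint from any sphere, so the hypothesis you invoke is not available; second, even under the liminality condition the disjoint spheres are only \emph{smooth} representatives of the classes $2H$, $(k+1)H-kE$, $A+(k+1)B$, $2A+B$, and upgrading one of them to a \emph{symplectic} sphere of non-negative square living in $X\smallsetminus\nu L_{n,1}$ for the given form $\omega$ is exactly the kind of statement that requires work (Li--Liu only produces a symplectic representative for some deformed form, with no control on its position relative to the pinwheel). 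So McDuff's theorem cannot yet be applied.

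The paper's own argument avoids your Step 3 entirely and does not pass through spheres or Seiberg--Witten invariants of the surgery. It proves (\cref{prop: discrepancies}) that the rational blow-up changes the pairing $c_1\cdot[\omega]$ by
\[
c_1(\widetilde X)\cdot\widetilde\omega \;=\; c_1(X)\cdot\omega \;+\; \sum_j d_j a_j,
\]
where the $a_j>0$ are the symplectic areas of the spheres of $C_n$ and the $d_j$ are the discrepancies of the cyclic quotient singularity, which are non-negative with at least one positive; the key homological input is that the pinwheel is rationally null-homologous, so the classes $c_1$ and $[\omega]$ split as contributions from cycles in the complement plus contributions from the $S_j$. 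Rationality of $\widetilde X$ then follows from the Liu--Ohta--Ono criterion: a simply connected symplectic $4$-manifold with $c_1\cdot\omega>0$ is rational. Note that this argument only covers the case $c_1(X)\cdot\omega\ge 0$ (\cref{cor: rationality preserv}); for the statement in full generality the paper, like you, ultimately defers to \cite{PaShi23}. If you want to salvage your approach, the honest conclusion is that your proposal proves the same partial statement the paper does only under an additional unstated hypothesis, whereas the discrepancy computation gives it under the cleaner and checkable condition $c_1(X)\cdot\omega\ge 0$.
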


We give an alternative proof of this theorem here. 
The main idea of the proof is straight-forward: The rational blow-up is localized around a pinwheel, which is \textbf{null-homologous} in rational homology and de Rham cohomology. 
Therefore, the cohomology classes of the new Chern class and symplectic form are, roughly, the sum of the old ones and the discrepancies introduced by the chain of spheres $C_n$. 
Therefore, the result we want to prove is reduced to a local computation for the chain of spheres. 
This mode of proof, suggested to us by J. Evans, is directly inspired by \cite{AdaEv24}, where it was shown that a very different kind of symplectic surgery, Luttinger surgery along a Lagrangian Klein bottle, also preserves rationality.

\begin{proposition}\label{prop: discrepancies}
    Let $(X,\omega)$ be a simply connected symplectic $4$-manifold. 
    Suppose that $(X,\omega)$ carries an $L_{p,q}$-pinwheel. 
    Then, the symplectic rational blow-up along $L_{p,q}$ gives rise to a new symplectic manifold $(\widetilde{X},\tilde{\omega})$ for which
    \[c_1(\widetilde{X})\cdot\tilde{\omega}=c_1(X)\cdot \omega + d_{p,q}.\]
    The number $d_{p,q}$ is always positive and depends only on the pinwheel $L_{p,q}$ and the symplectic area of the divisors introduced by the blow up.  
\end{proposition}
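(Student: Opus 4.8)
The plan is to exploit the fact that a pinwheel $L_{p,q}$ is null-homologous in both rational homology and de Rham cohomology, so that the rational blow-up affects $c_1$ and $[\omega]$ only through purely local data on the chain of spheres $C_n$ glued in. First I would set up the Mayer--Vietoris / excision picture: since $X$ is simply connected and the neighbourhood $\nu L_{p,q}$ is a rational homology ball, the inclusion $X - \nu L_{p,q} \hookrightarrow X$ induces an isomorphism on $H^2(-;\mathbb{R})$, and likewise $\widetilde X - \nu C_n \hookrightarrow \widetilde X$ is surjective on $H^2(-;\mathbb{R})$ with kernel spanned by the classes of the spheres $S_i$. Using the symplectic identification $(X - \nu L_{p,q},\omega) \simeq (\widetilde X - \nu C_n,\widetilde\omega)$, the class $[\widetilde\omega] \in H^2(\widetilde X;\mathbb{R})$ is determined by its restriction to $\widetilde X - \nu C_n$ (which equals $[\omega]$ restricted to $X - \nu L_{p,q}$) together with its pairings $\widetilde\omega(S_i)$, which are exactly the prescribed symplectic areas of the divisors. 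The same remarks apply to $c_1(\widetilde X)$: its restriction to $\widetilde X - \nu C_n$ agrees with $c_1(X)$ restricted to $X - \nu L_{p,q}$ via the symplectomorphism, and $c_1(\widetilde X)(S_i)$ is computed from the adjunction formula $c_1(\widetilde X)(S_i) = S_i\cdot S_i + 2$ for the symplectic spheres $S_i$.

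Next I would make the pairing $c_1(\widetilde X)\cdot[\widetilde\omega]$ explicit. Choose a basis of $H^2(\widetilde X;\mathbb{R})$ adapted to the decomposition: classes pulled back from $X - \nu L_{p,q}$ plus the classes Poincar\'e-dual to the spheres $S_i$. Because the intersection form splits as a block sum — the $S_i$ span a negative-definite sublattice with the prescribed linking pattern, orthogonal (over $\mathbb{R}$) to the image of $H^2(X - \nu L_{p,q})$ — the product $c_1(\widetilde X)\cdot[\widetilde\omega]$ decomposes as the corresponding product computed on $X$ (which gives $c_1(X)\cdot\omega$) plus a term involving only the $S_i$-block. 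That second term, call it $d_{p,q}$, is a bilinear expression in the vector $(c_1(\widetilde X)(S_i))_i = (S_i^2+2)_i$ and the vector of areas $(\widetilde\omega(S_i))_i$, contracted against the inverse of the (negative-definite) intersection matrix of the chain $C_n$. This shows $d_{p,q}$ depends only on the combinatorial type of $C_n$ — hence only on $(p,q)$ — and on the symplectic areas of the divisors, as claimed.

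It remains to show $d_{p,q}>0$. Here I would argue as follows: the chain $C_n$ is, by construction, the resolution divisor of a minimal resolution of the cyclic quotient singularity $\tfrac1{n^2}(1,n-1)$, so every sphere $S_i$ has $S_i^2 \le -2$ and the discrepancies of the resolution are all strictly negative (the singularity being log terminal but not canonical of Gorenstein index $>1$ for $n\ge 2$ — this is precisely the content of the footnote about discrepancies). Concretely, $c_1(\widetilde X)(S_i) = S_i^2 + 2 \le 0$, and solving the linear system against the negative-definite intersection matrix turns these non-positive quantities into a strictly positive total contribution once paired with the strictly positive area vector; I would either verify this directly for the explicit matrix of the $(-(n+2),-2,\dots,-2)$-chain, or cite the fact that the "discrepancy" $d_{p,q}$ equals the self-intersection defect of the compactifying disc bundle, which is positive by \cref{lemma:compactification} (the inequalities $\mu>\tfrac{k}{k+1}h$ and $\tfrac{a}{k+1}>b$ recorded there are exactly the sign one needs). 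The main obstacle I anticipate is the bookkeeping in the second paragraph: making precise that the intersection form genuinely splits orthogonally over $\mathbb{R}$ — i.e. that no class pulled back from the complement pairs nontrivially with the $S_i$ — which requires care because the spheres $S_i$ are not themselves null-homologous in $\widetilde X$; one must use that their \emph{linear combinations dual to the complement} vanish, equivalently that $\nu C_n$ is a rational homology ball boundary-matched to $\nu L_{p,q}$. Once that orthogonality is pinned down, everything else is a finite linear-algebra computation with the explicit chain.
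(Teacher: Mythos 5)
Your proposal is correct and follows essentially the same route as the paper: locality of the surgery at a rationally null-homologous pinwheel, a Mayer--Vietoris basis of $H_2(\widetilde X;\mathbb{Q})$ split into classes from the complement and the spheres $S_i$, orthogonality $S_i\cdot\widetilde Y_j=0$, and reduction of $d_{p,q}$ to a computation on the chain. The one genuine difference is the positivity step: the paper writes $d_{p,q}=\sum_j d_j a_j$ with $a_j=\widetilde\omega(S_j)>0$ and cites the classical fact that the discrepancies $d_j$ of a cyclic quotient singularity are non-negative with at least one positive, whereas you propose to compute $d_{p,q}=v^{T}Q^{-1}a$ with $v_i=S_i^2+2\le 0$ directly from the explicit chain matrix. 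That works, but note that negative-definiteness of $Q$ alone does not give $v^{T}Q^{-1}a>0$; you additionally need that every entry of $-Q^{-1}$ is positive (true for resolution graphs, and exactly equivalent to the discrepancy non-negativity the paper cites), so your "direct verification" is not shorter, just more explicit. Two small slips: $\nu C_n$ is not a rational homology ball (it has $b_2=n-1$; what you actually need is that its boundary lens space is a rational homology sphere, so complement classes can be represented disjointly from the $S_i$), and the proposed fallback citation of \cref{lemma:compactification} for positivity does not apply — that lemma concerns compactifications of sphere neighborhoods, not the chain $C_n$.
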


\begin{proof}
    Let $U$ be the standard neighborhood of $L_{p,q}$, diffeomorphic to $B_{p,q}$.
    Since $B_{p,q}$ is a rational homology ball and its contact boundary is the lens space $L(p^2;pq-1)$ the inclusion $X-U\xhookrightarrow{i} X$ induces the isomorphism on $H_2(X-U,\mathbb{Q})\xhookrightarrow{i_*}H_2(X,\mathbb{Q})$. 
    Therefore we can find cycles $Y_i$ disjoint from $L\subset U$ that form a basis of $H_2(X,\mathbb{Q})$. Since the rational blow-up happens around $L$, these cycles correspond to cycles $\tilde{Y}_i\subset \widetilde{X}$.

    Since the lens space $L(p^2;pq-1)$ related to $B_{p,q}$ is a rational homology sphere, Mayer--Vietoris shows that the spheres $S_j$, introduced by the rational blow-up, together with the cycles $\widetilde{Y}_i$, form a basis of $H_2(\widetilde{X},\mathbb{Q})$.
    Therefore, after identifying rational homology with de Rham cohomology via Poincaré duality, we have the following expressions for the Poincaré duals of $c_1(X)$ and $\omega$:
    \[c_1(X)=\sum \delta_i Y_i\quad\text{and}\quad \omega=\sum \beta_i Y_i,\]
    Then, the corresponding classes for $\widetilde{X}$ become:
    \[c_1(\widetilde{X})=\sum d_jS_j+\sum \delta_i\widetilde{Y}_i\quad\text{and}\quad \tilde{\omega}=\sum b_jS_j +\sum \beta_i\widetilde{Y}_i.\]
    Since $S_j\cdot \widetilde{Y}_i=0$ we easily see that
    \[c_1(\widetilde{X})\cdot\tilde{\omega}=c_1(X)\cdot \omega+\overbrace{(\sum d_jS_j)(\sum b_rS_r)}^{d_{p,q}}\]
    and so we only need to show that $d_{p,q}>0$. This follows directly since
    \[d_{p,q}=\sum_j d_j(\sum_r b_rS_jS_r)=\sum_j d_ja_j\]
    where $a_j=\tilde{\omega}(S_j)$, namely the symplectic areas of the $S_j$. The $a_i$ are the areas of symplectic spheres so they are strictly positive. 
    In addition, it is a classical fact that the discrepancies of cyclic quotient singularities are non-negative\footnote{Algebraic geometers usually measure the discrepancies in terms of the canonical class of the complex structure, while we consider them in terms of the Chern class. This creates the obvious sign difference between the $d_j$ here and the $c_j$ of \cite{HaTeUr17,EvSm20:Bounds}.}, with at least one of them  positive, compare \cite{HaTeUr17,EvSm20:Bounds}. 
    Therefore $d_{p,q}$ is always positive. 
\end{proof}

\begin{corollary}\label{cor: rationality preserv}
    Let $(X,\omega)$ be a symplectic rational manifold with $c_1(X)\cdot\omega\geq 0$. 
    Then, rationally blowing up an $L_{p,q}$-pinwheel in $X$ will produce a symplectic rational manifold.
\end{corollary}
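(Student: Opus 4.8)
The plan is to deduce \cref{cor: rationality preserv} from \cref{prop: discrepancies} together with the Seiberg--Witten-theoretic classification of symplectic manifolds with $b^+ = 1$. First I would observe that \cref{prop: discrepancies} applies: a symplectic rational surface is simply connected, so rationally blowing up the $L_{p,q}$-pinwheel produces a symplectic $4$-manifold $(\widetilde X, \widetilde\omega)$ with
\[
c_1(\widetilde X)\cdot\widetilde\omega = c_1(X)\cdot\omega + d_{p,q} > 0,
\]
using $c_1(X)\cdot\omega\ge 0$ and positivity of the discrepancy term $d_{p,q}$. So the blow-up is a symplectic manifold with $c_1\cdot[\omega]>0$; I now need to upgrade ``$c_1\cdot[\omega]>0$'' to ``rational.''

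The key step is to invoke the structure theory of symplectic $4$-manifolds. Rationally blowing up does not change $b^+$, since the rational homology ball has the rational homology of a point and $C_n$ is a negative-definite plumbing, so $b^+(\widetilde X) = b^+(X) = 1$. By Liu's theorem (building on Taubes and the work of McDuff and Li--Liu), a minimal symplectic $4$-manifold with $b^+ = 1$ and $c_1^2 < 0$ is ruled over a surface of positive genus; equivalently, a symplectic $4$-manifold with $b^+ = 1$ which is \emph{not} rational or ruled over a positive-genus base has $K_X\cdot[\omega]\ge 0$ for its minimal model, hence $c_1\cdot[\omega]\le 0$. Since $(\widetilde X,\widetilde\omega)$ has $b^+=1$, is simply connected (so cannot be ruled over positive genus), and satisfies $c_1\cdot[\omega]>0$, it must be rational. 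Alternatively, one can phrase this via the classification of symplectic manifolds with $c_1\cdot[\omega]>0$ (``symplectic Fano/positive''), which are exactly the rational ones with $b^+=1$ in the simply connected case; this is exactly the ``positive symplectic rational manifold'' language used in the abstract.

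The main obstacle — and the reason this is stated as a corollary rather than being immediate — is making sure the hypothesis of \cref{prop: discrepancies}, namely \emph{simple connectivity}, is genuinely inherited: $X$ is a rational surface hence simply connected, and removing a neighborhood of $L_{p,q}$ then gluing in $\nu C_n$ preserves simple connectivity because $\pi_1$ of the common boundary lens space surjects onto $\pi_1$ of each side and both sides are simply connected on the complement (this is the same Mayer--Vietoris/van Kampen bookkeeping already implicit in the proof of \cref{prop: discrepancies}). Granting that, the only real content is the citation to the $b^+=1$ classification; everything else is formal. I would therefore keep the proof short: apply \cref{prop: discrepancies} to get $c_1(\widetilde X)\cdot\widetilde\omega>0$, note $b^+(\widetilde X)=1$ and $\pi_1(\widetilde X)=1$, and conclude rationality by the cited structure theorem.
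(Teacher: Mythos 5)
Your proposal is correct and follows essentially the same route as the paper: apply \cref{prop: discrepancies} to get $c_1(\widetilde X)\cdot\widetilde\omega>0$ and then invoke the Liu--Ohta--Ono criterion that a simply connected symplectic $4$-manifold with $c_1\cdot[\omega]>0$ is rational (your $b^+=1$ structure-theory phrasing is just an unwinding of that same criterion). The extra care you take to check that $\pi_1(\widetilde X)=1$ is a reasonable point which the paper leaves implicit, but it does not change the argument.
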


\begin{proof}
    This follows immediately from \cref{prop: discrepancies} and the Liu--Ohta--Ono criterion \cite{Liu96, OhtaOno96} that a simply connected symplectic manifold with $c_1(\widetilde{X})\cdot\tilde{\omega}>0$ is rational. 
\end{proof}

A straightforward Mayer--Vietoris argument shows that, when blowing up along an $L_{n,1}$-pinwheel in a rational manifold, the second Betti number $b_2$ increases by the number of spheres in $C_n$, namely $n-1$, i.e.\ we obtain $b_2(\widetilde{X})=b_2(X)+ (n-1)$. 
In particular, this gives us the following lemma.

\begin{lemma}\label{Lemma_Rational_Blowup_in_S2S2_X1}
    Performing a symplectic rational blow-up along an $L_{n,1}$-pinwheel in either $(S^2\times S^2,\omega_{a,b})$ or $(X_1,\omega_{\mu,h})$ produces the manifold $X_{n}$ equipped with a symplectic form $\omega_{h,\bm{\mu}}$ for some $h >0$ and weights $\bm{\mu}\in \mathbb{R}_{>0}^n$.
\end{lemma}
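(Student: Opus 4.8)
The plan is to combine \cref{cor: rationality preserv}, a Mayer--Vietoris count of $b_2$, and the classification of symplectic forms on rational surfaces. First I would check that the hypothesis $c_1(X)\cdot\omega\geq 0$ of \cref{cor: rationality preserv} holds for the two model manifolds. This is a one-line Poincar\'e-duality computation: $\mathrm{PD}(c_1(S^2\times S^2))=2A+2B$ while $\mathrm{PD}(\omega_{a,b})=bA+aB$, so $c_1\cdot\omega_{a,b}=2(a+b)>0$; likewise $\mathrm{PD}(c_1(X_1))=3H-E$ while $\mathrm{PD}(\omega_{h,\mu})=hH-\mu E$, so $c_1\cdot\omega_{h,\mu}=3h-\mu>0$ using the volume constraint $h>\mu>0$. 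Hence \cref{cor: rationality preserv} applies and the rational blow-up $(\widetilde{X},\widetilde{\omega})$ along an $L_{n,1}$-pinwheel is a symplectic rational manifold (one could also argue directly: \cref{prop: discrepancies} gives $c_1(\widetilde{X})\cdot\widetilde{\omega}=c_1(X)\cdot\omega+d_{n,1}>0$, whence rationality by the Liu--Ohta--Ono criterion).

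Next I would pin down the diffeomorphism type of $\widetilde{X}$. By the Mayer--Vietoris computation noted above, the blow-up raises $b_2$ by $n-1$, so $b_2(\widetilde{X})=b_2(X)+(n-1)=n+1$, which is $\geq 3$ since $n\geq 2$ (a $p$-pinwheel requires $p\geq 2$). The only closed rational $4$-manifolds are $\mathbb{C}P^2$, $S^2\times S^2$ and the iterated blow-ups $X_k$, and the diffeomorphism $S^2\times S^2\#\overline{\mathbb{C}P^2}\cong\mathbb{C}P^2\#2\overline{\mathbb{C}P^2}=X_2$ absorbs the $S^2\times S^2$-blow-up towers into the $\mathbb{C}P^2$-towers as soon as one blow-up has been performed; hence a closed rational $4$-manifold with $b_2=m\geq 3$ is diffeomorphic to $X_{m-1}$. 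Therefore $\widetilde{X}$ is diffeomorphic to $X_n$.

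Finally I would invoke the classification of symplectic forms on rational surfaces (e.g.\ \cite{McD:rat_rul,McDSal16:Intro_book}): every symplectic form on $X_n$ is carried by a self-diffeomorphism of $X_n$ to a K\"ahler blow-up form $\omega_{h,\bm{\mu}}$ obtained from $(\mathbb{C}P^2,\omega_h)$ by blowing up $n$ points with weights $\bm{\mu}\in\mathbb{R}_{>0}^n$; together with \cref{thrm:symp_forms_on_rational} this identifies $(\widetilde{X},\widetilde{\omega})$ with some $(X_n,\omega_{h,\bm{\mu}})$, which is the claim. The first two steps are routine bookkeeping; the genuinely nontrivial ingredient, and hence the main obstacle, is this last appeal to the structure of the symplectic cone of rational surfaces, which ultimately rests on Taubes--Seiberg--Witten theory.
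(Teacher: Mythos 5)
Your proposal is correct and follows essentially the same route the paper takes (the lemma is stated as an immediate consequence of \cref{cor: rationality preserv} together with the Mayer--Vietoris count $b_2(\widetilde{X})=b_2(X)+(n-1)$, followed by the normalization of symplectic forms on rational surfaces). You merely make explicit the details the paper leaves implicit, namely the verification that $c_1\cdot\omega>0$ for $(S^2\times S^2,\omega_{a,b})$ and $(X_1,\omega_{h,\mu})$ and the final appeal to the symplectic-cone/classification results to put the resulting form into the shape $\omega_{h,\bm{\mu}}$ with strictly positive weights.
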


\subsection{A motivating example: Lagrangian \texorpdfstring{$\mathbb{R}P^2$}{RP2}s and Kronheimer's question}
\label{subsec:Kro_q}

Before we delve into the proofs of the main theorems of the paper, we think it is worthwhile to showcase the ideas behind the proofs contained in this paper by answering, in the negative, the following question posed by Kronheimer.
\begin{question}\emph{(\cite[Kronheimer]{Kro11:question})}
    Does every simply-connected, non-spin symplectic $4$-manifold contain a Lagrangian $\mathbb{R}P^2$?
\end{question}
Answering this question uses the ideas of proof for the simplest pinwheel, namely $L_{2,1}$-pinwheels, i.e.\ Lagrangian $\mathbb{R}P^2$s. 
Most of the ideas that we have to implement to study general $L_{n,1}$-pinwheels are already needed in the $n=2$ case. 
In addition, this case usually needs slightly different treatment than the other cases and so we will deal with it here and free ourselves in order to make the convenient assumption that $n\geq 3$ for the rest of the paper. 
We emphasize that some ideas leading to this method of proof are contained in \cite{BoLiWu13}, were first realized in \cite{SmSh20} for Lagrangian $\mathbb{R}P^2$s and later in \cite{Ada24:Pin} for $L_{3,1}$-pinwheels and intersection patterns of Lagrangian $\mathbb{R}P^2$s. 
The answer to Kronheimer's question is given by the following theorem.

\begin{theorem}\label{thrm: kronh}
    The symplectic manifold $(X_1,\omega_{h,\mu})$ carries a Lagrangian $\mathbb{R}P^2$ if and only if 
    \[\mu<\frac{h}{2}.\]
    In particular, for any symplectic form with $\mu\geq\frac{h}{2}$, the symplectic non-spin 4-manifold $(X_1,\omega_{\mu,h})$ does \textbf{not} contain any Lagrangian $\mathbb{R}P^2$.
\end{theorem}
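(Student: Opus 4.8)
The plan is to prove the two directions separately, following the "construct and obstruct" dichotomy that governs the whole paper. For the \emph{if} direction, assume $\mu<\frac{h}{2}$. I would exhibit a Lagrangian $\mathbb{R}P^2$ in $(X_1,\omega_{h,\mu})$ as a visible Lagrangian of a suitable almost toric fibration: start from the standard toric diagram of $X_1$, perform a nodal trade to produce a focus-focus singularity, and then realize the $L_{2,1}$-pinwheel (i.e.\ the vanishing cycle) over the branch cut, arranging by a nodal slide that the branch cut lies entirely in the interior of the base. The open condition for this configuration to fit inside the relevant almost toric base diagram — equivalently, for the visible Lagrangian $\mathbb{R}P^2$ to exist in $(X_1,\omega_{h,\mu})$ rather than in a manifold with a larger exceptional divisor — translates precisely into the inequality $\mu<\frac{h}{2}$, matching the $k=1$ case of the construction carried out in \cref{Section-1-Construction}. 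In the homological bookkeeping, this $\mathbb{R}P^2$ represents the $\mathbb{Z}_2$-class $H+E=kH+(k+1)E$ with $k=1$, and one checks that it is disjoint from the visible symplectic sphere in the class $2H$, so in particular it is a liminal pinwheel.

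For the \emph{only if} direction, suppose $(X_1,\omega_{h,\mu})$ contains a Lagrangian $\mathbb{R}P^2$, which by \cref{Lag_pinwheels_self_int} and the discussion of the orthogonal complement (\cref{Appendix:orthogonal complement}) must — after accounting for the disjointedness condition building in the liminal hypothesis — represent the $\mathbb{Z}_2$-class $H+E$ and be disjoint from a smooth sphere in the class $(k+1)H-kE=2H-E$ or $2H$. Now perform the symplectic rational blow-up along this $L_{2,1}$-pinwheel. By \cref{Lemma_Rational_Blowup_in_S2S2_X1} the result is $X_2$ with some symplectic form $\omega_{h',\bm\mu}$; the rational blow-up introduces a single $(-4)$-sphere $S_0$ (the configuration $C_2$ consists of one sphere with $S_0^2=-(n+2)=-4$). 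The sphere disjoint from the pinwheel survives the surgery, giving a symplectic sphere in $(X_2,\omega_{h',\bm\mu})$ in the corresponding class, disjoint from $S_0$. The key computation — the $n=2$ instance of \cref{prop:main_calculation} — is to identify the homology class of $S_0$ in $X_2$ in this blown-up picture; once this is known, the areas must satisfy $\omega_{h',\bm\mu}(S_0)>0$ together with the positivity of the areas of the surviving spheres, and feeding these through the area relations (using \cref{lemma:compactification} applied to the standard neighborhood $(V_{-4},\tau)$ of $S_0$, or an equivalent direct cohomological computation) forces $\mu<\frac{h}{2}$ on the original form.

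The main obstacle is the identification of the homology class of the exceptional $(-4)$-sphere $S_0$ after the rational blow-up, i.e.\ locating the configuration $C_2$ inside $H_2(X_2;\mathbb{Z})$ in terms of $H,E_1,E_2$. Because the pinwheel is null-homologous rationally, $c_1$ and $[\omega]$ behave additively as in \cref{prop: discrepancies}, but pinning down the \emph{integral} class of $S_0$ requires tracking the gluing of $\nu C_2$ to $X\setminus\nu L_{2,1}$ through the lens-space boundary $L(4,1)$ and using the constraints $S_0^2=-4$, $c_1(X_2)\cdot S_0$ (computable from the known self-intersection and the adjunction/Chern-class data), and $S_0\cdot$ (surviving sphere)$=0$. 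This is exactly the bookkeeping that \cref{prop:main_calculation} systematizes in general; in the $n=2$ case it is light enough to do by hand, and it is here that the two alternative disjointedness hypotheses ($2H-E$ versus $2H$) are used to rigidify the answer. Once $[S_0]$ is fixed, the remaining inequalities are a short linear-algebra computation in the areas, and combining the two directions yields the stated equivalence $\mu<\frac{h}{2}$.
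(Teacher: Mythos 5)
Your overall architecture matches the paper's: construct a visible Lagrangian $\mathbb{R}P^2$ via an almost toric fibration when $\mu<\frac{h}{2}$, and obstruct via the rational blow-up otherwise. The constructive half is fine. But there is a genuine gap in the obstructive half, at the very last step. You say the inequality follows from "$\omega_{h',\bm\mu}(S_0)>0$ together with the positivity of the areas of the surviving spheres." Those conditions are vacuous: with $D_1=2\widetilde H-\widetilde E_1$, $D_2=3\widetilde H-2\widetilde E_1-\widetilde E_2$ and $S_0=-\widetilde H+2\widetilde E_1-\widetilde E_2$, the areas of the surviving spheres are $2h-\mu$ and $2h$, which are positive for every admissible $(h,\mu)$, and the area $c$ of $S_0$ can be taken arbitrarily small. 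Solving the resulting linear system gives $\tilde\mu_2=\frac{h}{2}-\mu-\frac{c}{4}$, and the inequality $\mu<\frac{h}{2}$ comes precisely from $\tilde\mu_2>0$, i.e.\ from the positivity of the symplectic area of the \emph{exceptional class} $\widetilde E_2$ of $X_2$. That positivity is not elementary bookkeeping: it is the Taubes--Seiberg--Witten input (Li--Liu, \cite{LiLiu95:Genadj}) that a smoothly spherically represented exceptional class in a rational surface is represented by a symplectic sphere. Without invoking this (or an equivalent characterization of the symplectic cone of $X_2$), your list of constraints does not force any restriction on $\mu$, so the argument does not close.

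Two smaller corrections. First, the $\mathbb{Z}_2$-homology class of the $\mathbb{R}P^2$ is $H$, not $H+E$: the liminal class $kH+(k+1)E$ at $k=1$ is $H+2E\equiv H\pmod 2$, and in any case $(H+E)^2=0$ violates the self-intersection constraint. Moreover, pinning the class down to $H$ requires Audin's mod-$4$ criterion ($[L]^2\equiv 1\pmod 4$), since \cref{Lag_pinwheels_self_int} and the Chern-class constraint alone do not exclude $E$. Second, the theorem concerns \emph{arbitrary} Lagrangian $\mathbb{R}P^2$'s, so rather than "accounting for the liminal hypothesis" you need to observe, as in \cref{le:disjoint_cycles_RP2}, that for $n=2$ the disjointedness from smooth spheres in $2H$ and $2H-E$ is automatic from the orthogonal-complement description of \cref{Appendix:orthogonal complement}; this is what makes the case $n=2$ unconditional.
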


The idea of proof is that if there existed some Lagrangian $\mathbb{R}P^2$ in $(X_1,\omega_{h,\mu})$, where $\mu\geq\frac{h}{2}$, blowing it up would create a smooth manifold equipped with a symplectic form that cannot exist.
To show this we need to figure out the symplectic manifold that is obtained by performing a rational blow-up along a Lagrangian $\mathbb{R}P^2$ in $(X_1,\omega_{h,\mu})$.\footnote{We chose the language of rational blow-up, because it fits into the larger picture of this paper. The process of blowing up Lagrangian $\mathbb{R}P^2$s was known before the more general rational blow-up was introduced. For example in \cite[Subsection 2.2]{BoLiWu13} it is called the "symplectic cutting construction" and like the usual blow-up in the symplectic category it is realized by symplectically cutting along the periodic geodesic flow on the boundary of a Weinstein neighborhood.} 
Because the blow-up only effects a neighborhood of the $\mathbb{R}P^2$ that retracts onto the $\mathbb{R}P^2$, we first want to understand what homology classes live in its complement.

\begin{lemma}\label{le:disjoint_cycles_RP2}
    The only homology class of $H_2(X_1;\mathbb{Z}_2)$ that can carry an embedded Lagrangian $\mathbb{R}P^2$ is $H$. 
    In addition, for any smooth $\mathbb{R}P^2$ representing the class $H$ we can find embedded oriented submanifolds disjoint from it, representing the classes $2H$ and $2H-E$ in $H_2(X_1;\mathbb{Z})$.
\end{lemma}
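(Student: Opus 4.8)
The plan is to prove the two assertions separately: the first by a mod-$2$ self-intersection count followed by a rational blow-up and a lattice computation, the second by reading off the disjoint classes from a Mayer--Vietoris connecting homomorphism.

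For the first assertion, write $H_2(X_1;\mathbb{Z}_2)=\mathbb{Z}_2\langle H\rangle\oplus\mathbb{Z}_2\langle E\rangle$; the classes $0,H,E,H+E$ have mod-$2$ self-intersections $0,1,1,0$, so by \cref{Lag_pinwheels_self_int} with $p=2$ a class carrying an $L_{2,1}$-pinwheel, having self-intersection $\equiv-1\equiv1\pmod2$, must be $H$ or $E$. The work is to exclude $E$. If $L$ were a Lagrangian $\mathbb{R}P^2$ with $[L]=E$, I would rationally blow it up: by \cref{Lemma_Rational_Blowup_in_S2S2_X1} this produces $(X_2,\omega_{\tilde h,\tilde\mu_1,\tilde\mu_2})$, into which the single sphere $S_0$ of the configuration $C_2$ embeds as a symplectic $(-4)$-sphere, so $c_1(X_2)\cdot[S_0]=[S_0]^2+2=-2$, and $X_2\setminus\nu S_0$ is identified with $X_1\setminus\nu L$. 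Over $H_2(X_2;\mathbb{Z})=\mathbb{Z}\langle H,E_1,E_2\rangle$ the equations $d^2=-4$ and $c_1(X_2)\cdot d=-2$ have, up to swapping $E_1\leftrightarrow E_2$, only the solutions $-2E_1$, $-H+2E_1-E_2$, $-2H+2E_1+2E_2$; the first has negative $\omega$-area (as $\tilde\mu_i>0$) and is discarded, so $[S_0]$ is one of the other two. On the other hand, $[L]=E$ and $H\cdot E\equiv0\pmod2$ force $H$ to have a cycle representative inside $X_1\setminus\nu L=X_2\setminus\nu S_0$ (see the Mayer--Vietoris computation below), giving a class $H'\in H_2(X_2;\mathbb{Z})$ with $(H')^2=1$ and $H'\cdot[S_0]=0$. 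A short case check shows that the orthogonal complement in the unimodular lattice $H_2(X_2;\mathbb{Z})$ of each of $-H+2E_1-E_2$ and $-2H+2E_1+2E_2$ does not represent $1$, a contradiction; hence $[L]=H$.

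For the second assertion, let $L\subset X_1$ be a smooth $\mathbb{R}P^2$ with $[L]=H$. From $TX_1|_L=TL\oplus\nu L$, the orientability of $X_1$, and $\langle w_2(X_1),[L]\rangle=(H+E)\cdot H=1$, one computes $w_1(\nu L)=w_1(TL)\neq0$ and $w_2(\nu L)=w_1(TL)^2\neq0$, so $\nu L\cong T\mathbb{R}P^2$ and $\partial\nu L\cong L(4,1)$, a rational homology sphere; thus $\nu L$ is a rational homology ball. In the Mayer--Vietoris sequence
\[0=H_2(\partial\nu L)\longrightarrow H_2(\nu L)\oplus H_2(X_1\setminus\nu L)\longrightarrow H_2(X_1)\xrightarrow{\ \partial\ }H_1(\partial\nu L)\cong\mathbb{Z}_4,\]
the group $H_2(\nu L)\cong H_2(\mathbb{R}P^2;\mathbb{Z})$ vanishes, so $\operatorname{im}\big(H_2(X_1\setminus\nu L)\to H_2(X_1)\big)=\ker\partial$. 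The map $\partial$ is the linking homomorphism: a surface representing $\alpha$, made transverse to $L$, meets it in a number of points $\equiv\alpha\cdot[L]\pmod2$, each contributing the $S^1$-fibre of $L(4,1)\to\mathbb{R}P^2$, which is the order-$2$ element of $\mathbb{Z}_4$; hence $\partial\alpha=2\,(\alpha\cdot[L]\bmod2)$. Since $2H\cdot H$ and $(2H-E)\cdot H$ are even, both $2H$ and $2H-E$ lie in $\ker\partial$ and are therefore represented by cycles contained in $X_1\setminus\nu L$; representing these classes by embedded oriented surfaces --- always possible in a four-manifold --- yields the asserted submanifolds disjoint from $L$. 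The argument depends only on the isomorphism type of $\nu L$, which $[L]=H$ forces to be $T\mathbb{R}P^2$, so it applies to every such $L$.

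The main obstacle is the exclusion of $[L]=E$: the purely homological constraints of \cref{Lag_pinwheels_self_int} and \cref{remark:pinwheels_soft_obstruction} are symmetric under $H\leftrightarrow E$, so one genuinely needs the symplectic input of \cref{Lemma_Rational_Blowup_in_S2S2_X1} together with positivity of symplectic area, and the technical heart is the elementary but case-based verification that the relevant orthogonal complements in $H_2(X_2;\mathbb{Z})$ fail to represent $1$.
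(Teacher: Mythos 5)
Your proof is correct, but both halves take a genuinely different route from the paper's. For the first assertion the paper invokes Audin's congruence $[L]\cdot[L]\equiv 1\pmod 4$ (via the Pontryagin square), which kills every class other than $H$ in one line, since $E$ has Pontryagin square $-1\equiv 3\pmod 4$. You use only the weaker mod-$2$ congruence of \cref{Lag_pinwheels_self_int}, which, as you correctly note, cannot separate $H$ from $E$, and you compensate with symplectic input: rationally blowing up, listing the candidate classes for the $(-4)$-sphere via adjunction and positivity of area, and checking that the orthogonal complements of the surviving candidates do not represent $1$. I verified the lattice arithmetic: the restricted forms are $-\beta(4\alpha+5\beta)$ and $-2\beta(\alpha+\beta)$, and neither takes the value $1$ on integers, so the contradiction with the disjoint representative of $H$ is genuine. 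Your route is considerably longer but is self-contained modulo \cref{Lemma_Rational_Blowup_in_S2S2_X1}, whereas the paper's is a one-liner that imports Audin's theorem as a black box; note that your detour really is forced once one refuses Audin, since all the mod-$2$ invariants in \cref{remark:pinwheels_soft_obstruction} are symmetric in $H$ and $E$. For the second assertion the paper simply cites Borman--Li--Wu and Shevchishin--Smirnov; your Mayer--Vietoris and linking-number computation (with $\partial\alpha=2(\alpha\cdot[L]\bmod 2)\in\mathbb{Z}_4$) is in substance a hands-on proof of the $p=2$ case of the paper's own \cref{thrm:pinwheel_hom/orth_comp}, so nothing is lost there either.
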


\begin{proof}
    Let $L$ be an embedded Lagrangian $\mathbb{R}P^2$. 
    A direct application of Audin's criterion \cite{Aud88}, that implies that $[L]^2\equiv 1 \mod 4$, means that $[L]=H$, because no other $\mathbb{Z}_2$-homology class satisfy the condition on the Pontryagin square.
    
    The rest of the lemma is a straightforward. 
    The $\mathbb{Z}$-homology classes $2H$ and $E$ both pair trivially$\mod 2$ with the class $H$ and thus \cite[Lemma 4.10]{BoLiWu13} and \cite[Section 2]{SmSh20} shows that $2H$ and $2H-E$ can be represented by embedded oriented submanifolds, disjoint from $L$.\footnote{The homological complement to Lagrangian pinwheels is discussed in more detail in \cref{Appendix:orthogonal complement}.}
\end{proof}

Now, we can determine the smooth manifold obtained by rationally blowing up a Lagrangian $\mathbb{R}P^2$ in $(X_1,\omega_{h,\mu})$, which also means determining the cohomology class that is coming with, i.e.\ determining the periods of the symplectic form that it carries.\footnote{Notice that by Theorem \ref{thrm:symp_forms_on_rational} the periods indeed determine the symplectic form.}

\begin{lemma}\label{lemma: blowing up rp2}
    Let $L$ be an embedded Lagrangian $\mathbb{R}P^2$ in $(X_1,\omega_{h,\mu})$. Rationally blowing up $L$ yields the symplectic manifold $(X_2,\omega_{\tilde{h},\bm{\tilde{\mu}}})$, where
    \begin{equation*}
    \tilde{h}=\frac{3h}{2}-\mu+\frac{c}{4},\quad \tilde{\mu_1}=h -\mu+\frac{c}{2}\quad\text{and}\quad\tilde{\mu}_2=\frac{h}{2}-\mu-\frac{c}{4}.
    \end{equation*}
    Here $c$ is the symplectic area of the $(-4)$-sphere introduced by the rational blow up.
\end{lemma}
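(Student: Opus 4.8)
The plan is to compute the cohomology class of $\widetilde{\omega}$ directly, using the fact established in \cref{Lemma_Rational_Blowup_in_S2S2_X1} that rationally blowing up an $L_{2,1}$-pinwheel in $(X_1,\omega_{h,\mu})$ produces $X_2$ with a symplectic form of the form $\omega_{\tilde h,\bm{\tilde\mu}}$. By \cref{thrm:symp_forms_on_rational} this form is determined by its periods, so it suffices to pin down $\tilde h$, $\tilde\mu_1$, $\tilde\mu_2$. First I would recall from \cref{subsec:Kro_q} that the rational blow-up of $L_{2,1}$ is performed by excising a Weinstein neighborhood of the Lagrangian $\mathbb{R}P^2$ and gluing in a neighborhood of the single $(-4)$-sphere $S_0$ (the configuration $C_2$ consists of just one sphere, since $n-1=1$), so that $\widetilde{X}-\nu S_0 \simeq X_1 - \nu L$ symplectically. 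Let $c = \widetilde{\omega}(S_0)$ be the area of this new sphere; this is the one free parameter, matching the statement.

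**Identifying the homology of the new sphere.** The key step is to determine the class $[S_0]\in H_2(X_2;\mathbb{Z})$. Since $S_0$ has self-intersection $-4$ and is glued along $\partial(\nu L)$, and since (by \cref{le:disjoint_cycles_RP2} and its proof, following \cite{BoLiWu13,SmSh20}) the complement of $L$ carries disjoint representatives of $2H$ and $2H-E$, the class $[S_0]$ should be expressible in the standard basis $H, E$ (for $X_1$) together with the new exceptional class $E_2$ coming from the blow-up. I expect $[S_0] = H - E - E_2$ or a similar combination; one checks this by requiring $[S_0]^2 = -4$ and that $S_0$ pairs correctly with the classes surviving from $X_1-\nu L$. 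Concretely, since $2H-E$ lives in the complement of $L$ and survives into $\widetilde{X}$ as a class with zero intersection with $S_0$, and the analogous constraint from $2H$, one solves a small linear system. I would also use the general structure of the $C_n$-resolution of $\tfrac{1}{4}(1,1)$ from \cite{Sym98:Ratblo} together with the fact that the canonical class adjusts by the discrepancy of $S_0$, fixing $c_1(X_2)\cdot[S_0]$ and hence constraining $[S_0]$ further.

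**Solving for the periods.** Once $[S_0]$ is known, I would write $\mathrm{PD}(\widetilde{\omega}) = \alpha H' + \beta E_1' + \gamma E_2'$ in the standard basis of $H_2(X_2)$ and impose three conditions: (i) $\widetilde{\omega}(S_0) = c$; (ii) the periods of $\widetilde{\omega}$ over the classes $Y_i$ coming from $X_1-\nu L$ agree with those of $\omega_{h,\mu}$ (i.e.\ $\widetilde{\omega}$ restricted to the common region equals $\omega$), which after choosing explicit disjoint representatives of, say, $H'-E_1'$-type and $E_1'$-type classes built from the $2H$, $2H-E$, and line-minus-exceptional classes of $X_1$, gives two equations relating $\alpha,\beta,\gamma$ to $h$ and $\mu$; and (iii) the volume/consistency normalization $\tilde h = \tilde\mu_1 + \tilde\mu_2 + (\text{correction})$ that $X_2$-forms satisfy. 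Solving this $3\times 3$ system should yield exactly
\[
\tilde h = \frac{3h}{2} - \mu + \frac{c}{4}, \qquad \tilde\mu_1 = h - \mu + \frac{c}{2}, \qquad \tilde\mu_2 = \frac{h}{2} - \mu - \frac{c}{4}.
\]
As a sanity check I would verify that $\tilde h - \tilde\mu_1 - \tilde\mu_2 = 0$ is not required (these need not be "minimal" coordinates) but that the volume $\tilde h^2 - \tilde\mu_1^2 - \tilde\mu_2^2$ equals $\mathrm{vol}(X_1,\omega_{h,\mu}) + (\text{area terms from the gluing}) = h^2 - \mu^2 + \tfrac14 c^2$ — the self-intersection $-4$ of $S_0$ contributing the $\tfrac14 c^2$ — which the given formulas indeed satisfy.

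**Main obstacle.** The delicate point is step two: correctly identifying $[S_0]$ in $H_2(X_2;\mathbb{Z})$, and more precisely pinning down which genuine integral cycles in $X_1 - \nu L$ survive and how they sit inside $X_2$ after the gluing. The complement of $L$ only injects into $H_2$ rationally (as in the proof of \cref{prop: discrepancies}), so one must be careful about the integral lattice and about the identification between $\partial(\nu L)$ and $\partial(\nu S_0)$ — both are the lens space $L(4,1)$, but the gluing diffeomorphism matters for the framing. I would handle this by working in the almost toric picture: realize $(X_1,\omega_{h,\mu})$ with an ATF in which $L$ appears as a visible Lagrangian over a branch cut (as in \cref{Section-1-Construction}), perform the rational blow-up torically, and simply read off $[S_0]$ and all the areas from the resulting base diagram. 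This sidesteps the subtlety, since in the toric model every edge length is literally a symplectic area and every integral class is visible.
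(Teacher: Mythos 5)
Your overall strategy --- track the classes surviving from $X_1-\nu L$ into $X_2$, pin down $[S_0]$, and solve a linear system for the periods --- is the same as the paper's, but the step you defer is exactly where the content of the lemma lies, and your setup for it is flawed. The rational blow-up is not an ordinary blow-up, so there is no canonical splitting of $H_2(X_2;\mathbb{Z})$ as the old lattice $\langle H,E\rangle$ plus one new exceptional class in which the surviving classes $2H$ and $2H-E$ keep their expressions; in the paper's standard basis $\{\widetilde H,\widetilde E_1,\widetilde E_2\}$ one finds $D_1=2\widetilde H-\widetilde E_1$ but $D_2=3\widetilde H-2\widetilde E_1-\widetilde E_2$, so the change of lattice is genuinely nontrivial. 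Your guess $[S_0]=H-E-E_2$ has square $-1$, not $-4$, and, more importantly, the linear system you propose to solve is written in coordinates that do not exist. The missing ingredient is the identification, up to a diffeomorphism of $X_2$, of the images $D_1,D_2$ of $2H-E$ and $2H$ in the standard basis (the paper does this by a direct computation with self-intersections, mutual intersections and Chern numbers, in the spirit of \cref{theorem:LiLi}); only then do orthogonality to $D_1,D_2$ together with adjunction force $[S_0]=-\widetilde H+2\widetilde E_1-\widetilde E_2$, after which the three period equations $2h-\mu=2\tilde h-\tilde\mu_1$, $2h=3\tilde h-2\tilde\mu_1-\tilde\mu_2$ and $c=-\tilde h+2\tilde\mu_1-\tilde\mu_2$ yield the stated formulas.

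Your proposed escape route --- doing everything in an almost toric model --- does not close this gap, because the lemma concerns an \emph{arbitrary} embedded Lagrangian $\mathbb{R}P^2$ in $(X_1,\omega_{h,\mu})$, not only a visible one; the homological argument is precisely what makes the conclusion independent of the particular embedding. Two smaller points: your condition (iii) is not an independent constraint (conditions (i) and (ii) already give three equations), and your volume sanity check is off by a sign --- the stated formulas give $\tilde h^2-\tilde\mu_1^2-\tilde\mu_2^2=h^2-\mu^2-\tfrac{c^2}{4}$, not $+\tfrac{c^2}{4}$.
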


\begin{proof}
    By Lemma \ref{Lemma_Rational_Blowup_in_S2S2_X1}, performing the rational blow-up along $L$ indeed gives us the symplectic manifold $(X_2,\omega_{\tilde{h},\bm{\tilde{\mu}}})$. 
    Our goal is to now determine the periods $\tilde{h},\tilde{\mu_1}$ and $\tilde{\mu}_2$.
    
    Let $S_0\subset X_2$ be the symplectic $(-4)$-sphere obtained by the rational blow-up of $L$. 
    The lattice generated by the homology classes 
    \begin{equation*}
        \langle W_1=2H-E,W_2=2H \rangle \subseteq H_2(X_1;\mathbb{Z})
    \end{equation*}
    embeds into $H_2(X_2;\mathbb{Z})$, disjoint from $S_0$.\footnote{Again, this will be discussed in more detail for the general case of $L_{p,q}$-pinwheels in \cref{Appendix:orthogonal complement}. However, for the case needed here, i.e. of a Lagrangian $\mathbb{R}P^2$, this is discussed in detail in \cite[Lemma 4.10]{BoLiWu13} and \cite[Section 2]{SmSh20}.}
    Let $D_1$ and $D_2$ be the classes corresponding to $W_1$ and $W_2$ under the blow-up. 
    Denote by $\{\widetilde{H},\widetilde{E}_1,\widetilde{E}_2\}$ the canonical basis of $H_2(X_2,\mathbb{Z})$. 
    A direct calculation involving self-intersections, intersections, and Chern numbers shows that, up to a diffeomorphism, we can assume that $D_1=2\widetilde{H}-\widetilde{E}_1$  and $D_2=3\widetilde{H}-2\widetilde{E}_1-\widetilde{E}_2$. 
    The fact that these classes are disjoint from $S_0$ together with the adjunction formula imply that $S_0=-\widetilde{H}+2\widetilde{E}_1-\widetilde{E}_2$.\footnote{We could have also used Proposition \ref{prop: determining d_i} to determine the $D_i$, as we will do for the higher order pinwheels. The point is that for $n\geq 9$ blowing up an $L_{n,1}$-pinwheel in $\mathbb{C}P^2$ yields $X_{n\geq 9}$ and there the analogous homological information is not enough to determine the classes of $S_i$ and $D_i$ completely.}
    From the above correspondence we get that
    \begin{align*}
    2h-\mu=\omega_{h,\mu}(W_1)&=\omega_{\tilde{h},\tilde{\bm{\mu}}}(D_1)=2\tilde{h}-\tilde{\mu}_1\\
    2h=\omega_{h,\mu}(W_2)&=\omega_{\tilde{h},\tilde{\bm{\mu}}}(D_2)=3\tilde{h}-2\tilde{\mu}_1-\tilde{\mu}_2.
    \end{align*}
    The symplectic sphere $S_0$ has positive area $c>0$. Therefore, we also have 
    \begin{equation*}
        c=\omega_{\tilde{h},\tilde{\bm{\mu}}}(S_0)=-\tilde{h}+2\tilde{\mu}_1-\tilde{\mu}_2.
    \end{equation*}
    Finally, solving for the periods of $\omega_{\tilde{h},\tilde{\bm{\mu}}}$ we get
    \begin{equation*}
        \tilde{h}=\frac{3h}{2}-\mu+\frac{c}{4},\quad \tilde{\mu_1}=h -\mu+\frac{c}{2}\quad\text{and}\quad\tilde{\mu}_2=\frac{h}{2}-\mu-\frac{c}{4}.
    \end{equation*}
\end{proof}

\begin{remark}
    Note that there is an abundance of information to deduce the weights of the symplectic form in the above lemma. 
    For example, one could also use the information coming from the total volume of the underlying $(X_1,\omega_{h,\mu})$ and compare it to the total volume of $(X_2,\omega_{\tilde{h},\bm{\tilde{\mu}}})$.
\end{remark}

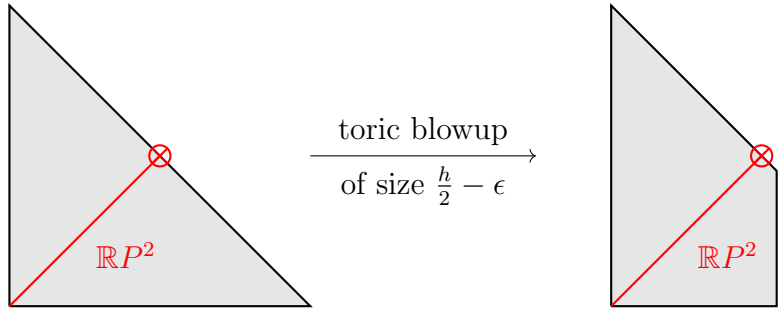
\begin{figure}[ht]
  \centering
  \begin{tikzpicture}
  
    \begin{scope}[shift={(-5,-2)}]
        \fill[opacity=0.1] (0,4) -- (0,0) -- (4,0);
        \draw[thick] (0,0) -- (4,0) -- (0,4) -- (0,0);
        \draw[thick,red] (2,2) node[circlecross] {} -- node[below right]{$\mathbb{R}P^2$} (0,0);
    \end{scope}
    
    \draw[->] (-1,0) -- node[anchor=south] {toric blowup} node[anchor=north] {of size $\frac{h}{2}-\epsilon$} (2,0);
    
    \begin{scope}[shift={(3,-2)}]
        \fill[opacity=0.1] (0,4) -- (0,0) -- (2.2,0) -- (2.2,1.8);
        \draw[thick] (0,0) -- (2.2,0) -- (2.2,1.8) -- (0,4) -- (0,0);
        \draw[thick,red] (2,2) node[circlecross] {} -- node[below right]{$\mathbb{R}P^2$} (0,0);
    \end{scope}

  \end{tikzpicture}
  \caption{Construction of a Lagrangian $\mathbb{R}P^2$-pinwheel in $(X_1,\omega_{h,\mu})$ for $\mu<\frac{h}{2}$.}
  \label{fig:RP2_X_1}
\end{figure}

We can now conclude the proof of \cref{thrm:Kro_squeeze_rp2}.

\begin{proof}[Proof of \cref{thrm:Kro_squeeze_rp2}]
    Suppose that there exists a Lagrangian $\mathbb{R}P^2$ in $(X_1,\omega_{h,\mu})$. \cref{lemma: blowing up rp2} shows that blowing up this Lagrangian $\mathbb{R} P^2$ yields the symplectic manifold $(X_2,\omega_{\tilde{h},\bm{\tilde{\mu}}})$, where
    \[\tilde{\mu}_2=\frac{h}{2}-\mu-\frac{c}{4}\]
    and $c$ corresponds to the size of the rational blow-up. By \cite[Theorem A]{LiLiu95:Genadj} we know that since $\widetilde{E}_2$ is represented by a smooth sphere it must also be represented by a symplectic one, therefore $\tilde{\mu}_2$ is positive, which directly implies 
    
    \begin{equation*}
        \mu<\frac{h}{2}.
    \end{equation*}
    
    Now, assuming $\mu<\frac{h}{2}$ there are various ways to construct Lagrangian $\mathbb{R} P^2$s in $(X_1,\omega_{h,\mu})$. In \cref{fig:RP2_X_1} we illustrate a toric approach. This is further elaborated on in \cref{Section-1-Construction}.
    Alternatively, one can use Biran's Lagrangian barrier result in \cite{Bi01:Lagbar} that shows that the standard $\mathbb{R}P^2$ in $\mathbb{C}P^2$ is a Lagrangian barrier. 
    In our situation, this means that in $(\mathbb{C}P^2,\omega_h)$ one can fit a symplectic ball of capacity \textbf{at most} $\mu<\frac{h}{2}$ disjoint from the standard Lagrangian $\mathbb{R}P^2$ and so blowing up that ball gives the desired symplectic form on $X_1$ that is carrying a Lagrangian $\mathbb{R}P^2$.
\end{proof}

\begin{remark}
    Note that \cref{thrm: kronh} actually reproves Biran's Lagrangian barrier result in \cite[Theorem 1.B]{Bi01:Lagbar} for the $\mathbb{C}P^2$ case.
\end{remark}

\section{Constructing Lagrangian pinwheels}
\label{Section-1-Construction}

\subsection{Lagrangian \texorpdfstring{$L_{2k+1,1}$}{L2k+1,1}-Pinwheels in \texorpdfstring{$S^2\times S^2$}{S2xS2}}\label{Section_Pinwheels_S2xS2}

Since the question out of which this paper grew was posed by Evans in \cite[Appendix J]{Ev23:Book}, and subsequently answered by the first author in \cite{Ada24:Pin}, let us quickly recall the construction that Evans used to illustrate his question.

Assume that $a<b<2a$ and perform the sequence of manipulations of the standard toric fibration of $(S^2 \times S^2,\omega_{a,b})$ illustrated in \cref{fig:Evanspinwheel_a_b_2a}.
Then we obtain a visible Lagrangian $L_{3,1}$-pinwheel, living over the red segment in the almost toric base diagram on the right of \cref{fig:Evanspinwheel_a_b_2a}.
This is because over the red segment (excluding the point on the toric boundary) there lives a Lagrangian disk with center on the singularity of the nodal fibre, and over the red segment (excluding the base-node) there lives a $(3,1)$-pinwheel core.

Note that the assumption on the weights of the symplectic form $\omega_{a,b}$ is essential, because for $2a \leq b$ the visible Lagrangian either hits the vertex, in which case the visible Lagrangian is a sphere with a Schoen--Wolfson singular point, as discussed in \cite[Section 5.2]{Ev23:Book}, or hits the toric boundary on the right, in which case the visible Lagrangian is a smooth disk.

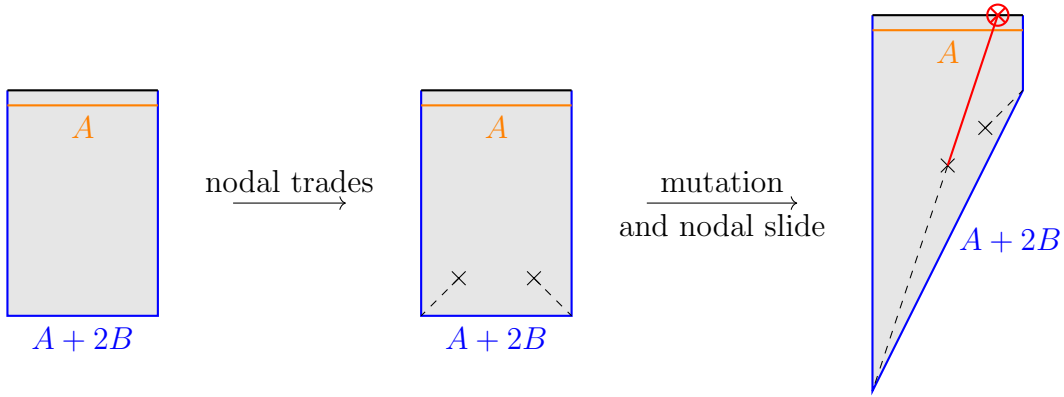
\begin{figure}[ht]
  \centering
  \begin{tikzpicture}
  
    \begin{scope}[shift={(-3,-1.5)}]
      \fill[opacity=0.1] (0,3) rectangle (2,0);
      \draw[thick, blue] (0,3) -- (0,0) -- node[anchor=north] {$A+2B$} (2,0) -- (2,3);
      \draw[thick] (0,3) -- (2,3);
      \draw[thick, orange] (0,2.8) -- node[anchor=north] {$A$}(2,2.8);
    \end{scope}
    
    \draw[->] (0,0) -- node[anchor=south] {nodal trades} node[anchor=north] {} (1.5,0);
    
    \begin{scope}[shift={(2.5,-1.5)}]
      \fill[opacity=0.1] (0,3) rectangle (2,0);
      \draw[thick, blue] (0,3) -- (0,0) -- node[anchor=north] {$A+2B$} (2,0) -- (2,3);
      \draw[thick, orange] (0,2.8) -- node[anchor=north] {$A$}(2,2.8);
      \draw[thick] (0,3) -- (2,3);
      \draw[dashed] (0.5,0.5) node[cross] {} -- (0,0);
      \draw[dashed] (1.5,0.5) node[cross] {} -- (2,0);
    \end{scope}

    \draw[->] (5.5,0) -- node[anchor=south] {mutation} node[anchor=north] {and nodal slide} (7.5,0);

    \begin{scope}[shift={(8.5,-0.5)}]
      \fill[opacity=0.1] (0,3) -- (0,-2) -- (2,2) -- (2,3) -- (0,3);
      \draw[thick, blue] (0,3) -- (0,-2) -- node[anchor=west] {$A+2B$} (2,2) -- (2,3);
      \draw[thick, orange] (0,2.8) -- node[anchor=north] {$A$}(2,2.8);
      \draw[thick] (0,3) -- (2,3);
      \draw[dashed] (1.5,1.5) node[cross] {} -- (2,2);
      \draw[dashed] (1,1) node[cross] {} -- (0,-2);
      \draw[thick,red] (5/3,3) node[circlecross] {} -- (1,1);
    \end{scope}
    
  \end{tikzpicture}
  \caption{Evans' construction of a $L_{3,1}$-pinwheel in $(S^2 \times S^2,\omega_{a,b})$.}
  \label{fig:Evanspinwheel_a_b_2a}
\end{figure}

If $b \leq a$ the sequence of manipulations of the standard toric fibration gives a slightly different picture, but essentially works out the same way.
Now we want to determine the $\mathbb{Z}_3$-homology class that is represented by this Lagrangian pinwheel. 
Denote this homology class by $L\in H_2(S^2 \times S^2; \mathbb{Z}_3)$. 
The visible pinwheel is disjoint from the sphere living over the orange part of the toric boundary, which represents the class $A+2B \in H_2(S^2 \times S^2; \mathbb{Z}_3)$, as in indicated in \cref{fig:Evanspinwheel_a_b_2a}.\footnote{By definition the collection of the three symplectic sphere, indicated in blue in the honest toric fibration of $(S^2 \times S^2,\omega_{a,b})$ depicted in \cref{fig:Evanspinwheel_a_b_2a}, represents the class $A+2B$. To see that this class is also represented by the symplectic sphere living over the blue part of the toric boundary in the almost toric fibration depicted in \cref{fig:Evanspinwheel_a_b_2a} one can investigate the local model of a nodal trade as mentioned in \cite[Remark 8.6]{Ev23:Book}, or just prove a small topological lemma relying on the fact the the homology of a ball is trivial.}
Moreover, over the orange line lives a standard symplectic representative of the class $A$. 
This implies that $A\cdot L \equiv 1 \mod 3$, because we exactly know how $L$ looks like in the fibers of the almost toric fibration. 
This is discussed in \cite[Section 7]{Sym02:Fourtwo}.
Using these two observations one concludes that $L=A+B \in H_2(S^2 \times S^2; \mathbb{Z}_3)$. 

Assuming that $b<a<2b$ and realizing that the sequence of manipulations of standard toric fibration of $(S^2 \times S^2,\omega_{a,b})$ can also be performed in the other ``horizontal'' direction we conclude:

\begin{corollary}
    There exists a Lagrangian $L_{3,1}$-pinwheel in $(S^2\times S^2,\omega_{a,b})$ representing the $\mathbb{Z}_3$-homology class $A+B \in H_2(S^2 \times S^2; \mathbb{Z}_3)$ if
        \[\frac{a}{2}<b<2a.\]
\end{corollary}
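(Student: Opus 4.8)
The plan is to construct the pinwheel explicitly as a visible Lagrangian of an almost toric fibration, following the sequence of moves sketched in \cref{fig:Evanspinwheel_a_b_2a}, to identify its $\mathbb{Z}_3$-homology class by reading off how it meets the torus fibres, and finally to enlarge the range of weights using the symmetry of $S^2\times S^2$.

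I would first treat the range $a<b<2a$. Starting from the standard toric moment polygon of $(S^2\times S^2,\omega_{a,b})$ --- a rectangle with horizontal edges of affine length $a$ and vertical edges of affine length $b$ --- perform nodal trades at the two vertices of the bottom edge. This turns the bottom edge together with the two side edges into a single (broken) toric boundary edge whose preimage sphere represents the class $A+2B$, and it introduces two focus--focus nodes with branch cuts emanating into the interior. A mutation at one of these nodes, followed by a nodal slide dragging the node along its rotated eigenline, produces the diagram on the right of \cref{fig:Evanspinwheel_a_b_2a}, in which a distinguished integral segment (drawn red) joins the moved node to the top toric edge. The point to verify, and this is where the hypothesis enters, is that for $a<b<2a$ this segment is embedded and primitive, meets the toric boundary only at its endpoint on the top edge, and avoids every vertex of the polygon: the upper bound $b<2a$ is exactly what stops it from running into the top-right vertex (which would instead give a sphere with a Schoen--Wolfson singular point) or escaping through the right edge (which would instead give an embedded Lagrangian disc), while $a<b$ leaves room for the nodal slide. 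Granting this, the local models recalled in \cref{subsec:atfs} show that over the red segment there lives, away from the node, a $(3,1)$-pinwheel core transported along the segment, and, away from the top edge, a Lagrangian disc centred at the node; these glue to a visible Lagrangian $L_{3,1}$-pinwheel.

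Next I would compute its homology class $L\in H_2(S^2\times S^2;\mathbb{Z}_3)$. Since none of the moves changes the underlying symplectic manifold, it suffices to work in the final diagram. There $L$ is disjoint from the symplectic sphere over the broken boundary edge, which represents $A+2B$, and meets the standard symplectic sphere over the interior horizontal segment, which represents $A$, transversally in a single fibre; reading these intersections off in the fibres as in \cite[Section 7]{Sym02:Fourtwo} gives $L\cdot(A+2B)\equiv 0$ and $L\cdot A\equiv 1\pmod 3$. Writing $L=\alpha A+\beta B$ with $\alpha,\beta\in\mathbb{Z}_3$, the second relation forces $\beta=1$ and then the first forces $\alpha=1$, so $L=A+B$, consistently with the constraint $L\cdot L\equiv-1\pmod 3$ from \cref{Lag_pinwheels_self_int}.

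It remains to cover $\tfrac a2<b\le a$. Here the diffeomorphism of $S^2\times S^2$ exchanging the two $S^2$-factors swaps $a\leftrightarrow b$ and fixes $A+B$, so the mildly different diagram obtained by running the same procedure on a vertical edge instead --- which works as long as $a<2b$ --- produces a Lagrangian $L_{3,1}$-pinwheel in the class $A+B$ for all $\tfrac a2<b\le a$. Together with the range $a<b<2a$ this gives exactly $\tfrac a2<b<2a$. The main obstacle I anticipate is the bookkeeping in carrying out the almost toric construction: one must push the relevant monodromy matrix through the mutation, check that the moved node, the new toric edge, and the red segment land in the claimed integral directions, and confirm that the condition for the red segment to define a genuine $L_{3,1}$-pinwheel (rather than a singular sphere or a smooth disc) is precisely $a<b<2a$; once this combinatorics is in place the rest is routine.
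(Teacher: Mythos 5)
Your proposal is correct and follows essentially the same route as the paper: Evans' almost toric construction (nodal trades, mutation, nodal slide) for $a<b<2a$, identification of the class $A+B$ from the two intersection relations $L\cdot(A+2B)\equiv 0$ and $L\cdot A\equiv 1 \pmod 3$, and the factor-swapping symmetry (equivalently, running the mutation in the other direction) to cover $\tfrac a2<b\le a$. The only cosmetic difference is that the paper also notes the case $b\le a$ can be handled directly by a slightly modified diagram rather than only by symmetry.
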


\begin{remark}
    Note that using the sequence of mutations illustrated in \cref{fig:Evanspinwheel_a_b_2a} in the other, i.e.\ ``horizontal'', direction one ends up with a visible $L_{3,1}$-pinwheel whose pinwheel core terminates on the toric boundary that represents the class $B$. We do not know wether these two pinwheels are isotopic in any sense.
\end{remark}

The main observation of this section is that one can continue mutating at the lower base-node in \cref{fig:Evanspinwheel_a_b_2a} to produce $L_{2k+1,1}$-pinwheels in $(S^2 \times S^2,\omega_{a,b})$ for $k\geq 1$. We distinguish three cases, but assume $\frac{a}{2}<b$ throughout in the following.

For the first case assume that $a<b$ and that there exists an integer $l\in \mathbb{N}$ such that $la<b<(l+1)a$. 
The sequence of mutations illustrated in \cref{fig:pinwheel_la_b_(l+1)a} yields an $L_{2l+1,1}$-pinwheel, as well as an $L_{2(l+1)+1,1}$-pinwheel.
Note that the $L_{2l+1,1}$-pinwheel is disjoint from an embedded symplectic sphere representing the class $A+2B \in H_2(S^2 \times S^2; \mathbb{Z}_{2l+1})$ as well as from an embedded symplectic sphere representing the class $(l+1)A+B\in H_2(S^2 \times S^2; \mathbb{Z}_{2l+1})$\footnote{Note that the vertex in the middle almost toric fibration in \cref{fig:pinwheel_la_b_(l+1)a} that is not associated to the pinwheel is Delzant. Retracting the branch cut gives the desired sphere.} and that the intersection between the $\mathbb{Z}_{2l+1}$-class $L$ represented by the pinwheel and the class $A$ can be immediately deduced from the diagram\footnote{As before compare \cite[Section 7]{Sym02:Fourtwo} for details on the intersection properties, as well as for the behavior of the homology classes under almost toric manipulations of the almost toric base diagram. This is also discussed in \cite[Chapter 8]{Ev23:Book}.}:
\[ L\cdot (A+2B)=0 \mod{2l+1} \hspace{1cm}\text{ and }\hspace{1cm} L\cdot A=1 \mod{2l+1}, \]
which implies that $L=lA+B$.

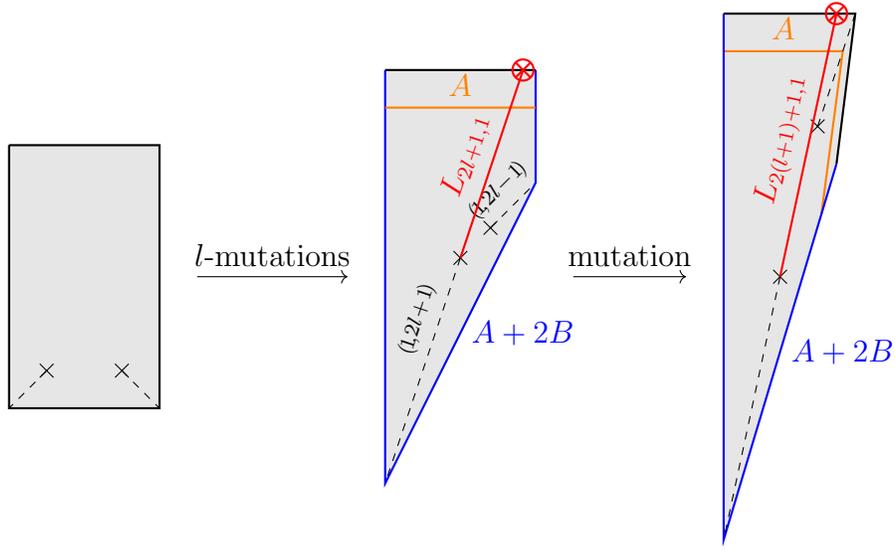
\begin{figure}[ht]
  \centering
  \begin{tikzpicture}
  
    \begin{scope}[shift={(-4,-1.75)}]
        \fill[opacity=0.1] (0,3.5) -- (0,0) -- (2,0) -- (2,3.5);
        \draw[thick] (0,3.5) -- (0,0) -- (2,0) -- (2,3.5) -- (0,3.5);
        \draw[dashed] (0.5,0.5) node[cross] {} -- (0,0);
        \draw[dashed] (1.5,0.5) node[cross] {} -- (2,0);
    \end{scope}
    
    \draw[->] (-1.5,0) -- node[anchor=south] {$l$-mutations} node[anchor=north] {} (0.5,0);
    
    \begin{scope}[shift={(1,-2.75)}]
        \fill[opacity=0.1] (0,5.5) -- (0,0) -- (2,4) -- (2,5.5);
        \draw[thick, blue] (0,5.5) -- (0,0) -- node[right]{$A+2B$} (2,4) -- (2,5.5);
        \draw[thick] (2,5.5) -- (0,5.5);
        \draw[thick, orange] (0,5) -- node[anchor=south]{$A$} (2,5);
        \draw[dashed] (1,3) node[cross] {} -- node[above right,sloped]{\tiny $(\!1\!,\!2l\!+\!1\!)$} (0,0);
        \draw[dashed] (1.4,3.4) node[cross] {} -- node[above,sloped]{\tiny $(\!1\!,\!2l\!-\!1\!)$}(2,4);
        \draw[thick,red] (5.5/3,5.5) node[circlecross] {} -- node[above, sloped]{$L_{2l+1,1}$} (1,3);
    \end{scope}

    \draw[->] (3.5,0) -- node[anchor=south] {mutation} node[anchor=north] {} (5,0);
    
    \begin{scope}[shift={(5.5,-3.5)}]
        \fill[opacity=0.1] (0,7) -- (0,0) -- (3/2,5) -- (7/4,7) ;
        \draw[thick] (0,7) -- (7/4,7) -- (3/2,5);
        \draw[thick, orange] (0,6.5) -- node[anchor=south]{$A$} (19/12,13/2) -- (26/20,26/6);
        \draw[thick, blue] (0,7) -- (0,0) -- node[right]{$A+2B$} (3/2,5);
        \draw[dashed] (3/4,7/2) node[cross] {} -- node[anchor=east] {} (0,0);
        \draw[dashed] (5/4,11/2) node[cross] {} -- (7/4,7);
        \draw[thick,red] (6/4,14/2) node[circlecross] {} -- node[above, sloped]{$L_{2(l+1)+1,1}$} (3/4,7/2);
    \end{scope}
    
  \end{tikzpicture}
  \caption{Construction of an $L_{2l+1,1}$-pinwheel and an $L_{2(l+1)+1,1}$-pinwheel in $(S^2 \times S^2,\omega_{a,b})$ for all $l\leq k$. (This is a schematic picture, the lengths and slopes are not shown to proportion.)}
  \label{fig:pinwheel_la_b_(l+1)a}
\end{figure}

Mutating at the lower base-node of the almost toric base diagram on the right of \cref{fig:pinwheel_la_b_(l+1)a} once more yields the almost toric base diagram shown schematically in \cref{fig:pinwheel_la_b_(l+1)_final}. 
Now notice that the part on the right of this diagram is the same as the one illustrated on the left in \cref{fig:mutation} for $m=2(l+2)$. 
Since the mutation in \cref{fig:mutation} can be iterated we conclude that for all integers $l\leq k$ there exists an $L_{2k+1,1}$-pinwheel in $(S^2 \times S^2,\omega_{a,b})$ representing the homology class $kA+B \in H_2(S^2 \times S^2; \mathbb{Z}_{2k+1})$.

The second case is the case in which $b$ is an integer multiple of $a$, in which case slightly different almost toric base diagrams appear, but the almost toric base diagrams appearing are essentially the same.

The last case is the case in which $b<a$, which yields a sequence of mutations that is slightly different, but essentially shows the same phenomenon.

Combining the discussion of these three cases we obtain:

\begin{corollary}\label{Corollary_Construction_S2xS2}
    There exists an $L_{2k+1,1}$-pinwheel in $(S^2 \times S^2,\omega_{a,b})$ representing the class $kA+B \in H_2(S^2 \times S^2; \mathbb{Z}_{2k+1})$ if
        \[\frac{a}{2}<b<(k+1)a.\]
    Furthermore, we can assume that these pinwheels are disjoint from an embedded symplectic sphere representing the class $A+2B\in H_2(S^2\times S^2;\mathbb{Z})$.
    
    Moreover, assuming that for $k\in\mathbb{N}$ we have $ka<b<(k+1)a$, the $L_{2k+1,1}$-pinwheel can be assumed to be disjoint from an embedded symplectic sphere representing the class $(k+1)A+B\in H_2(S^2\times S^2;\mathbb{Z})$.
    
    Equivalently, an $L_{2k+1,1}$-pinwheel in $(S^2 \times S^2,\omega_{a,b})$ representing the class $A+kB \in H_2(S^2 \times S^2; \mathbb{Z}_{2k+1})$ exists if
        \[\frac{b}{2}<a<(k+1)b\]
    and the properties about disjoint symplectic spheres hold analogously.
\end{corollary}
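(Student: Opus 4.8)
The plan is to build every such pinwheel as a visible Lagrangian of an almost toric fibration obtained from the standard toric fibration of $(S^2\times S^2,\omega_{a,b})$ by performing two nodal trades and then an iterated sequence of mutations (interspersed with nodal slides), following and extending Evans' $L_{3,1}$-construction recalled in \cref{fig:Evanspinwheel_a_b_2a}. Since interchanging the two $S^2$-factors swaps $(A,a)\leftrightarrow(B,b)$, the ``equivalently'' clause is a formal consequence of the first part, so it is enough to produce, for each $k\geq 1$ and each $(a,b)$ with $\frac a2<b<(k+1)a$, an $L_{2k+1,1}$-pinwheel in the $\mathbb Z_{2k+1}$-class $kA+B$ disjoint from a symplectic sphere in class $A+2B$, together with the sharper disjointness from a sphere in class $(k+1)A+B$ in the case $ka<b<(k+1)a$.

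First I would fix the integer $l$ with $la\le b<(l+1)a$; the hypothesis $b<(k+1)a$ forces $0\le l\le k$. Starting from the toric square, perform the two nodal trades of \cref{fig:Evanspinwheel_a_b_2a} and then $l$ mutations at the lower base-node, arriving at the middle almost toric base diagram of \cref{fig:pinwheel_la_b_(l+1)a} (for $l=0$, i.e.\ $b<a$, one uses the mildly different but analogous diagram of the ``$b<a$ case''). The relevant inequalities, which are in every case implied by $\frac a2<b<(k+1)a$, guarantee that the distinguished (red) segment emanates from the base-node and terminates transversally on the toric boundary while missing all vertices; hence, exactly as in the $l=1$ case, the visible Lagrangian over it is a genuine $L_{2l+1,1}$-pinwheel -- not a Lagrangian sphere with a Schoen--Wolfson singular point, nor a smooth disc. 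From the diagram this pinwheel is disjoint from the symplectic sphere $\Sigma$ over the ``blue'' part of the toric boundary, and $[\Sigma]=A+2B$ in $H_2(S^2\times S^2;\mathbb Z)$ by tracking the class through the two nodal trades (using the local model of a nodal trade, or triviality of the homology of a ball).

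Next I would pin down the homology class exactly as in the $L_{3,1}$ discussion: over the ``orange'' segment there is a standard symplectic representative of $A$, so, by the description of intersections of visible Lagrangians over segments in \cite[Section 7]{Sym02:Fourtwo}, writing $L=\alpha A+\beta B\in H_2(S^2\times S^2;\mathbb Z_{2k+1})$ one reads off $L\cdot A=\beta\equiv 1$ and $L\cdot(A+2B)=2\alpha+\beta\equiv 0\pmod{2k+1}$; since $2$ is a unit mod $2k+1$ this forces $\beta\equiv 1$ and $\alpha\equiv k$, i.e.\ $L=kA+B$. When moreover $ka<b<(k+1)a$ one takes $l=k$, in which case the second vertex of the middle diagram of \cref{fig:pinwheel_la_b_(l+1)a} (the one not associated to the pinwheel) is Delzant, and retracting its branch cut produces the symplectic sphere in class $(k+1)A+B$ disjoint from the pinwheel.

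Finally, to reach $k>l$ I would mutate once more at the lower base-node: the resulting diagram is \cref{fig:pinwheel_la_b_(l+1)_final}, and the portion of it lying to the right of the pinwheel's branch cut is precisely the left-hand configuration of \cref{fig:mutation} with $m=2(l+2)$. Since that mutation is available for every $m>1$, and each application raises the order of the visible pinwheel by $2$ while leaving both the ``orange'' $A$-curve and the ``blue'' $(A+2B)$-sphere intact, iterating yields $L_{2k+1,1}$-pinwheels for all $k\geq l$, each in class $kA+B$ (by the same intersection computation) and disjoint from a symplectic sphere in class $A+2B$. The cases $b\in a\mathbb Z$ and $b<a$ run along identical lines with cosmetically different diagrams. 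The only genuine friction I anticipate is the bookkeeping: verifying that the iterated mutations really do reproduce the claimed chain of diagrams with consistent monodromy labels (the primitive directions in \cref{fig:pinwheel_la_b_(l+1)a} and \cref{fig:mutation}), and, in the $b<a$ case, checking that the modified mutation sequence still closes up to an honest pinwheel. Everything else is standard almost toric geometry in the sense of \cite{Sym02:Fourtwo}.
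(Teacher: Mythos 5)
Your proposal is correct and follows essentially the same route as the paper: two nodal trades on the standard toric square, $l$ mutations at the lower base-node to reach the diagram of \cref{fig:pinwheel_la_b_(l+1)a}, the homology class pinned down by $L\cdot A\equiv 1$ and $L\cdot(A+2B)\equiv 0 \pmod{2k+1}$, the Delzant vertex giving the disjoint $(k+1)A+B$ sphere when $ka<b<(k+1)a$, and the iteration of the mutation of \cref{fig:mutation} with $m=2(l+2)$ to raise the order from $2l+1$ to $2k+1$, with the boundary cases $b\in a\mathbb{Z}$ and $b<a$ handled by cosmetically different diagrams. The symmetry argument for the ``equivalently'' clause likewise matches the remark following the corollary in the paper.
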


\begin{remark}
    The second part of \cref{Corollary_Construction_S2xS2} follows by symmetry. 
    In the calculations of the obstruction in \cref{sec:obstruction} we will consider Lagrangian pinwheels that represent the class $A+kB$, because this is what we used for the calculations initially.
    Since the construction part was inspired by Evans' question we followed his approach in our construction which yield pinwheels in the class $kA+B$.
\end{remark}

\begin{figure}[ht]
  \centering
  \begin{tikzpicture}
    
    \begin{scope}
        \fill[opacity=0.1] 
            (0,8) -- (0,0) -- (2,2) -- (5,8) ;
        \draw[thick] 
            (0,8) -- (5,8);
        \draw[thick, mid arrow]
            (2,2) -- node[right]{\tiny $\big(2(l+1),(2(l\!+\!1)\!+\!1)^2\big)$} (5,8);
        \draw[thick, mid arrow]
            (0,0) -- node[right]{\tiny $(1,4)$} (2,2);
        \draw[thick, orange] 
            (0,7) -- node[anchor=north]{$A$} (4,7) -- (7/3,10/3) -- (0,1);
        \draw[thick, blue] 
            (0,8) -- node[left]{$A+2B$} (0,0);
        \draw[dashed] 
            (3,6) node[cross] {} -- node[above,sloped]{\tiny $(1,2(l\!+\!2)\!+\!1)$} (2,2);
        \draw[dashed] 
            (3.5,6.5) node[cross] {} -- node[above,sloped]{\tiny $(1,2(l\!+\!1)\!+\!1)$} (5,8);
        \draw[thick,red] 
            (3.5,8) node[circlecross] {} -- node[above, sloped]{$L_{2(l+2)+1,1}$} (3,6);
    \end{scope}
    
  \end{tikzpicture}
  \caption{Schematic picture of the almost toric fibration on $(S^2 \times S^2,\omega_{a,b})$ after mutating at the lower base-node in the almost toric base diagram in \cref{fig:pinwheel_la_b_(l+1)a}. In this almost toric base diagram an $L_{2(l+2)+1,1}$-pinwheel appears as a visible Lagrangian.}
  \label{fig:pinwheel_la_b_(l+1)_final}
\end{figure}
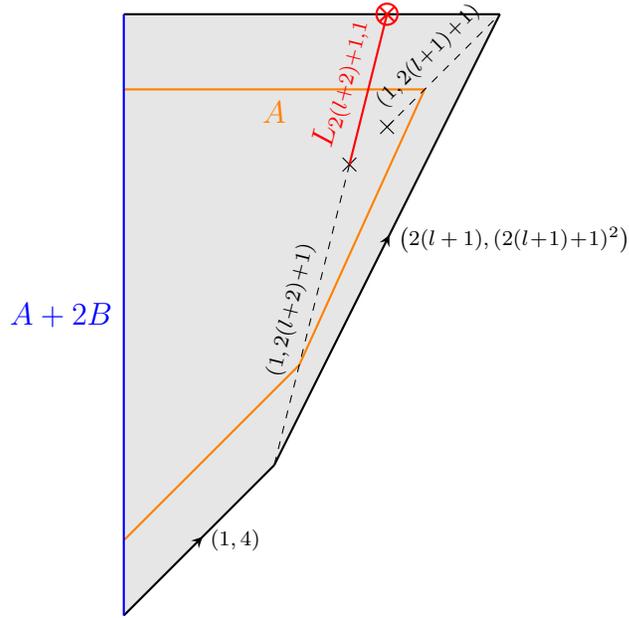

\subsection{Lagrangian \texorpdfstring{$L_{2k,1}$}{L2k,1}-Pinwheels in \texorpdfstring{$X_1$}{X1}}

The procedure to obtain $L_{2k,1}$-pinwheels in $X_1$ is analogous to the construction in the previous subsection. 
We illustrate the sequence of manipulations of the toric fibration of $(X_1,\omega_{h,\mu})$ in \cref{fig:pinwheels_X_1}. 
After a further mutation at the upper base-node in the right hand almost toric base diagram in \cref{fig:pinwheels_X_1} the sequence of manipulations continues in strict analogy to the sequence of mutations in \cref{Section_Pinwheels_S2xS2}.\footnote{The fact that there is no symmetry involved eliminates one of the cases in \cref{Section_Pinwheels_S2xS2}.} 

A calculation then yields that given $0<\mu<h$ there exists an $L_{2k,1}$-pinwheel in $(X_1,\omega_{h,\mu})$ if $(k+1)\mu-k h<0$.\footnote{The considerations are exactly the same as in \cref{Section_Pinwheels_S2xS2} and involve no new ideas. Therefore they are omitted here.}
Moreover, denoting the class represented by the visible $L_{2k,1}$-pinwheel by $L \in H_2(X_1;\mathbb{Z}_{2k})$, we find the two conditions 
\[ L\cdot 2H=0 \mod{2k} \hspace{1cm}\text{ and }\hspace{1cm} L\cdot (H-E)=1 \mod{2k}, \]
which means that $L=kH+(k+1)E \in H_2(X_1;\mathbb{Z}_{2k})$. Therefore, we obtain:

\begin{corollary}\label{Corollary_Construction_X1}
    There exists a Lagrangian $L_{2k,1}$-pinwheel in $(X_1,\omega_{h,\mu})$ representing the class $kH+(k+1)E \in H_2(X_1; \mathbb{Z}_{2k})$ if
        \[\mu<\frac{k}{k+1}h.\]
    Furthermore, we can assume that these pinwheels are disjoint from an embedded symplectic sphere representing the class $2H\in H_2(X_1;\mathbb{Z})$. 
    
    Moreover, assuming that for $k\in \mathbb{N}$ we have 
        \[\frac{k-1}{k}h<\mu<\frac{k}{k+1}h,\]
    the $L_{2k,1}$-pinwheel can be assumed to be disjoint from an embedded symplectic sphere representing the class $(k+1)H-kE\in H_2(X_1;\mathbb{Z})$.
\end{corollary}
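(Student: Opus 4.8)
The plan is to run, in $(X_1,\omega_{h,\mu})$, the construction carried out for $S^2\times S^2$ in \cref{Section_Pinwheels_S2xS2}, which requires essentially no modification. First I would take the standard moment polytope of $(X_1,\omega_{h,\mu})$ --- a trapezoid, obtained from the triangle of $(\mathbb{C}P^2,\omega_h)$ by cutting off a corner of size $\mu$, with edges in the classes $E$, $H$ and (twice) $H-E$ --- and, following \cref{fig:pinwheels_X_1}, perform nodal trades at two suitable vertices followed by a mutation and a nodal slide to reach an almost toric base diagram carrying a visible Lagrangian over the branch cut of one of the base-nodes. Above the interior of that segment there is a Lagrangian disc centered at the node, and above it away from the node a pinwheel core terminating on the toric boundary, so the visible Lagrangian is an $L_{m,1}$-pinwheel with $m$ read off from the monodromy parameter at the node.

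I would then iterate the mutation of \cref{fig:mutation}: exactly as in \cref{Section_Pinwheels_S2xS2}, after one further mutation the local picture near the relevant base-node becomes the left-hand side of \cref{fig:mutation} for the appropriate value of $m$, so the iteration continues indefinitely and yields $L_{2k,1}$-pinwheels for every $k$. At each stage one must verify that the visible Lagrangian is an honest pinwheel, i.e.\ that the branch cut reaches the base-node before it meets a vertex of the polytope or the toric boundary; this is an elementary affine-geometry check in the base diagram whose outcome is precisely the inequality $(k+1)\mu-kh<0$, equivalently $\mu<\frac{k}{k+1}h$. Keeping track of the slopes and edge-lengths through the chain of mutations and nodal slides in order to extract this inequality is the only technical point, and since it involves no idea beyond those already used in \cref{Section_Pinwheels_S2xS2} I would import that argument essentially verbatim.

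Finally, I would identify the class $L\in H_2(X_1;\mathbb{Z}_{2k})$ represented by the pinwheel. Over a boundary edge carrying the class $H-E$ there is a standard symplectic sphere, and reading off how $L$ sits inside the torus fibres over a parallel interior segment --- as in \cite[Section 7]{Sym02:Fourtwo} --- gives $L\cdot(H-E)\equiv 1 \pmod{2k}$; on the other hand the diagram exhibits an embedded symplectic sphere in the class $2H$ disjoint from the pinwheel, so $L\cdot 2H\equiv 0 \pmod{2k}$. Combined with $L\cdot L\equiv -1 \pmod{2k}$ from \cref{Lag_pinwheels_self_int}, these congruences force $L=kH+(k+1)E$. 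For the last assertion, under the stronger hypothesis $\frac{k-1}{k}h<\mu<\frac{k}{k+1}h$ one checks --- exactly as in the footnote to \cref{fig:pinwheel_la_b_(l+1)a}, which produced the $(l+1)A+B$-sphere in the $S^2\times S^2$ construction --- that the intermediate vertex of the diagram not associated with the pinwheel is Delzant, so retracting its branch cut gives an embedded symplectic sphere in the class $(k+1)H-kE$ disjoint from the pinwheel.
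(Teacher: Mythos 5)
Your proposal is correct and follows essentially the same route as the paper: the paper likewise runs the $S^2\times S^2$ mutation scheme of \cref{Section_Pinwheels_S2xS2} on the trapezoidal base diagram of $(X_1,\omega_{h,\mu})$ (\cref{fig:pinwheels_X_1}), extracts the inequality $(k+1)\mu-kh<0$ from the same affine-geometric check, pins down the class via $L\cdot 2H\equiv 0$ and $L\cdot(H-E)\equiv 1 \pmod{2k}$, and obtains the disjoint $2H$- and $(k+1)H-kE$-spheres from the toric boundary and the Delzant vertex exactly as you describe. The only cosmetic difference is your additional use of \cref{Lag_pinwheels_self_int} to help fix the class, which the paper does not invoke here.
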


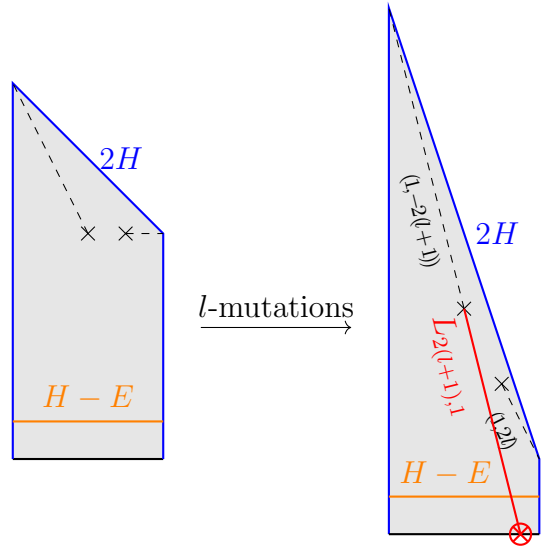
\begin{figure}[ht]
  \centering
  \begin{tikzpicture}
  
    \begin{scope}[shift={(-4,-1.75)}]
        \fill[opacity=0.1] (0,5) -- (0,0) -- (2,0) -- (2,3);
        \draw[thick] (0,0) -- (2,0);
        \draw[thick, blue] (0,0) -- (0,5) -- node[right]{$2H$} (2,3) -- (2,0);
        \draw[thick, orange] (0,0.5) -- node[above]{$H-E$} (2,0.5);
        \draw[dashed] (1,3) node[cross] {} -- (0,5);
        \draw[dashed] (1.5,3) node[cross] {} -- (2,3);
    \end{scope}
    
    \draw[->] (-1.5,0) -- node[anchor=south] {$l$-mutations} node[anchor=north] {} (0.5,0);
    
    \begin{scope}[shift={(1,-2.75)}]
        \fill[opacity=0.1] (0,7) -- (0,0) -- (2,0) -- (2,1);
        \draw[thick, blue] (0,0) -- (0,7) -- node[right]{$2H$} (2,1) -- (2,0);
        \draw[thick] (0,0) -- (2,0);
        \draw[thick, orange] (0,0.5) -- node[above]{$H-E$} (1.5,0.5) -- (2,0.5);
        \draw[dashed] (1,3) node[cross] {} -- node[below right,sloped]{\tiny $(\!1\!,\!-\!2(\!l\!+\!1\!)\!)$} (0,7);
        \draw[dashed] (1.5,2) node[cross] {} -- node[below,sloped]{\tiny $(\!1\!,\!2l\!)$}(2,1);
        \draw[thick,red] (7/4,0) node[circlecross] {} -- node[below left, sloped]{$L_{2(l+1),1}$} (1,3);
    \end{scope}

  \end{tikzpicture}
  \caption{Construction of Lagrangian $L_{2k,1}$-pinwheels in $(X_1,\omega_{h,\mu})$.}
  \label{fig:pinwheels_X_1}
\end{figure}

\subsection{Compactifications of rational homology balls}
\label{subsec:compactification}

Here is another way to produce pinwheels in $S^2\times S^2$ and $X_1$, that recovers the pinwheels constructed in the beginning of \cref{Section-1-Construction}. 
The compactification procedure discussed in this subsection was already known to Symington \cite{Sym98:Ratblo}, but we will give some more details here.
Furthermore, in this subsection we will also introduce some notions along the way that will be crucial for the Non-squeezing theorem \cref{thrm:nonsqueezing}.

For $n>1$ start by considering the almost toric base diagram $\Delta_{n,1}$, as depicted in \cref{fig:atfrationalhomologyball_n_1} and recall that the symplectic cut, originally introduced by Lerman in \cite{Ler95:cut}, fits perfectly into the almost toric setting, as explained by Evans in \cite[Section 4]{Ev23:Book}. 
We now want to compactify $B_{n,1}$ by two consecutive symplectic cuts.

\begin{figure}[ht]
  \centering
  \begin{tikzpicture}
    \begin{scope}
        \fill[opacity=0.2] 
            (0,2) -- (0,0) -- (9,2);
        \draw[thick, mid arrow] 
            (0,0) -- node[left] {\tiny $(0,1)$} 
            (0,2);
        \draw[thick, mid arrow]
            (0,0) -- node[anchor=north west] {\tiny $(n^2,n-1)$} 
            (9,2);
        \node at (1,1.5) {$\Delta_{n,1}$};
        \draw[dashed] (3,1) node[cross] {} -- node[anchor=south east] {\tiny $(n,1)$} (0,0);
    \end{scope}
  \end{tikzpicture}
  \caption{The almost toric base diagram $\Delta_{n,1}$.}
  \label{fig:atfrationalhomologyball_n_1}
\end{figure}
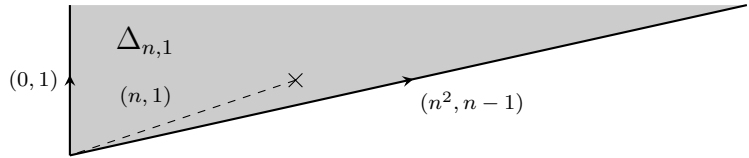

First, a horizontal symplectic cut above the node in $\Delta_{n,1}$ yields an almost toric base diagram with a orbifold singularity at the point where the horizontal line hits the slanted edge that points in the $(n^2,n-1)$-direction. 
This can be resolved by a second symplectic cut along the direction $(n+1,1)$. 
These two consecutive symplectic cuts yield the almost toric base diagram depicted in \cref{fig:atfrationalhomologyball_compactified_ellipsoids}. 
It is crucial to note that there are certain choices of sizes involved in this construction. This is the content of the following definition.

\begin{definition}
\label{Definition_rational_homology_ball_embeddings}
    For $\alpha,\beta>0$ define $\Delta_{n,1}(\alpha,\beta)$ to be the half-open triangle contained in $\Delta_{n,1}$ that is spanned by the two vectors $\alpha(n^2,n-1)$ and $\beta(0,1)$. 
    Furthermore, denote by $B_{n,1}(\alpha,\beta)$ the symplectic manifold that is defined by the almost toric base diagram $\Delta_{n,1}(\alpha,\beta)$, and for $\lambda>0$ abbreviate $B_{n,1}(\lambda)\vcentcolon=B_{n,1}(\lambda,\lambda)$. 
    By $B_{n,1}(\alpha,\infty)$ we denote the symplectic manifold associated to the half open vertical strip above the vector $\alpha(n^2,n-1)$, which we denote by $\Delta_{n,1}(\alpha,\infty)$. 
    For a pictorial description compare \cref{fig:atfrationalhomologyball_compactified_ellipsoids}.
\end{definition}

\begin{remark}
    Note that these definitions correspond to the usual definitions one makes when studying symplectic embedding problems. 
    For example, $B_{n,1}(\alpha,\beta)$ is the "rational homology ellipsoid", $B_{n,1}(\lambda)$ the "rational homology ball" and $B_{n,1}(\alpha,\infty)$ is the "rational homology cylinder". 
    In the case $n=1$ these definitions are exactly the definition of the usual symplectic ellipsoid and cylinder. 
    Embedding problems for rational homology balls and ellipsoids are studied in the forthcoming paper \cite{BrEvHaSch25}.
\end{remark}

From the definitions of the open domains we can define the compactifications that we discussed in the beginning of this section.

\begin{definition}
    For $\beta>(n-1)\alpha$ we denote by $(X_{n,1}(\alpha,\beta),\omega_{\alpha,\beta})$ the symplectic manifold that is obtained by compactifying $B_{n,1}$ via two symplectic cuts on $B_{n,1}$ as depicted in \cref{fig:atfrationalhomologyball_compactified_ellipsoids}.
    The almost toric base diagram underlying the definition of $X_{n,1}(\alpha,\beta)$ is denoted by $\Gamma_{n,1}(\alpha,\beta)$.
\end{definition}

\begin{remark}
    The condition $\beta>(n-1)\alpha$ is just a reflection of the geometry of the almost toric base diagram. 
    However, this condition will become crucial soon.
    In the following we will sometimes drop the notation of the parameters $\alpha$ and $\beta$, and denote by $X_{n,1}$ some compactification of $B_{n,1}$ for admissible parameters $\alpha$ and $\beta$ and similarly for~$\Gamma_{n,1}$. 
\end{remark}

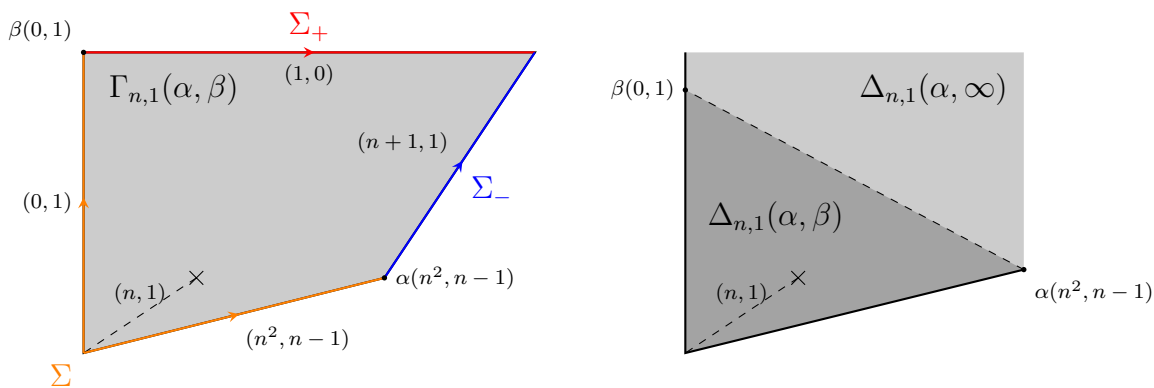
\begin{figure}[ht]
  \centering
  \begin{tikzpicture}
    \begin{scope}[shift={(-4,-1.75)}]
        \fill[opacity=0.2] 
            (0,4) -- (0,0) -- (4,1) -- (6,4);
        \draw[thick] 
            (0,4) node[anchor=south east] {\tiny $\beta(0,1)$} -- node[anchor=east] {\tiny $(0,1)$} 
            (0,0) -- node[anchor=north west] {\tiny $(n^2,n-1)$} 
            (4,1) node[anchor=west] {\tiny $\alpha(n^2,n-1)$} -- node[anchor=south east] {\tiny $(n+1,1)$} 
            (6,4) -- node[anchor=north] {\tiny $(1,0)$} 
            (0,4);
        \draw[dashed] 
            (1.5,1) node[cross] {} -- node[anchor=south] {\tiny $(n,1)$} (0,0);
        \draw[thick, red, mid arrow]
            (0,4) -- node[anchor=south] {$\Sigma_+$} 
            (6,4);
        \draw[thick, blue, mid arrow]
            (4,1) -- node[anchor=north west] {$\Sigma_-$} 
            (6,4);
        \draw[thick, orange, mid arrow]
            (0,0) node[anchor=north east] {$\Sigma$} -- (0,4);
        \draw[thick, orange, mid arrow]
            (0,0) -- (4,1);
        \fill (0,4) circle (1pt);
        \fill (4,1) circle (1pt);
        \node at (1.2,3.5) 
            {$\Gamma_{n,1}(\alpha,\beta)$};
    \end{scope}

    \begin{scope}[shift={(4,-1.75)}]
        \fill[opacity=0.2] 
            (0,4) -- (0,0) -- (4.5,1.11) -- (4.5,4);
        \fill[opacity=0.2] 
            (0,4) -- (0,0) -- (4.5,1.11) -- (0,3.5);
        \draw[thick] 
            (0,4) -- (0,0) -- (4.5,1.11);
        \draw[dashed] 
            (1.5,1) node[cross] {} -- node[anchor=south] {\tiny $(n,1)$} (0,0);
        \draw[dashed] 
            (4.5,1.11) node[anchor=north west] {\tiny $\alpha(n^2,n-1)$} -- 
            (0,3.5) node[anchor=east] {\tiny $\beta(0,1)$};
        \node at (3.3,3.5) {$\Delta_{n,1}(\alpha,\infty)$};
        \node at (1.2,1.8) {$\Delta_{n,1}(\alpha,\beta)$};
        \fill (4.5,1.11) circle (1pt);
        \fill (0,3.5) circle (1pt);
    \end{scope}
  \end{tikzpicture}
  \caption{On the left the almost toric base diagram $\Gamma_{n,1}(\alpha,\beta)$ defining the compactification of the rational homology ball $B_{n,1}$ and on the right a depiction of the various definitions around \cref{Definition_rational_homology_ball_embeddings}.}
  \label{fig:atfrationalhomologyball_compactified_ellipsoids}
\end{figure}

\begin{remark}
    This compactification procedure follows a pattern that is very similar to the symplectic cut procedure described by Evans in \cite[Chapter 9]{Ev23:Book} that yields resolutions of singularities. 
    It can be extended to general rational homology balls $B_{p,q}$. 
    We will not go into the details of this, as the general case is not needed here.
\end{remark}

We now discuss the basic topology and the symplectic geometry of the compactification~$X_{n,1}$. 
Above the toric boundary of the almost toric fibration defined via $\Gamma_{n,1}$ we find three embedded symplectic spheres $\Sigma_+,\Sigma_-$ and $\Sigma$, as depicted in \cref{fig:atfrationalhomologyball_compactified_ellipsoids}. 
In the next lemma we determine the homology classes that they define in $H_2(X_{n,1};\mathbb{Z})$.
It follows from the geometry of the almost toric base diagram that the self-intersection numbers of these spheres are $[\Sigma_+]^2=n+1, [\Sigma_-]^2=-(n-1)$ and $[\Sigma]^2=0$.

\begin{lemma}
\label{Lemma_Compactification_S2S2_X1}
    The closed manifold $X_{n,1}$ is a rational and ruled symplectic $4$-manifold.
    If $n=2k$ is even, then $X_{2k,1}$ is diffeomorphic to $X_1$ and in the standard basis we have $[\Sigma_+]=(k+1)H-kE$, $[\Sigma_-]=(1-k)H+kE$ and $[\Sigma]=H-E$.
    If $n=2k+1$ is odd, then $X_{2k+1,1}$ is diffeomorphic to $S^2 \times S^2$ and in the standard basis we have, up to the canonical swap of factors, $[\Sigma_+]=A+(k+1)B$, $[\Sigma_-]=A-kB$ and $[\Sigma]=B$.
\end{lemma}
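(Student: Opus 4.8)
The plan is to read off all the topological data directly from the almost toric base diagram $\Gamma_{n,1}(\alpha,\beta)$ in \cref{fig:atfrationalhomologyball_compactified_ellipsoids}. First I would observe that $X_{n,1}$ is rational and ruled: the diagram $\Gamma_{n,1}$ is a Delzant-type polygon with two interior nodes whose branch cuts can be traded back to corners, and after such nodal trades one obtains an honest toric polygon which is a trapezoid with a horizontal edge $(1,0)$ of positive length — hence $X_{n,1}$ fibers in $S^2$'s over $S^2$, making it $S^2\times S^2$ or $X_1$ by \cref{thrm:symp_forms_on_rational} and the classification of rational ruled surfaces recalled in \cref{sec:ratblow}. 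Which of the two we get is detected by the parity of the self-intersection of a section; the edge $\Sigma$ (the vertical $(0,1)$-edge together with the bottom $(n^2,n-1)$-edge) is a fiber of this ruling with $[\Sigma]^2=0$, while $\Sigma_-$ is a section, and I will use $[\Sigma_-]^2=-(n-1)$ to decide: when $n=2k+1$ is odd, $[\Sigma_-]^2=-2k$ is even so the ruling admits an even section and $X_{2k+1,1}\cong S^2\times S^2$; when $n=2k$ is even, $[\Sigma_-]^2=-(2k-1)$ is odd, forcing $X_{2k,1}\cong X_1$. This is essentially the same bookkeeping as in \cref{lemma:compactification}, and indeed one could alternatively deduce the whole lemma from that lemma applied to a neighborhood of $\Sigma_-$, a $(-(n-1))$-sphere.

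Next I would pin down the actual homology classes. Set $F\vcentcolon=[\Sigma]$, a fiber class with $F^2=0$, and let $S$ be a section with $S\cdot F=1$; normalize so that $S=[\Sigma_-]$. The three spheres $\Sigma_+,\Sigma_-,\Sigma$ bound the toric region, so in the fibration picture $[\Sigma_+]$ and $[\Sigma_-]$ are two sections and $[\Sigma]$ a fiber; the constraints are the self-intersections $[\Sigma_+]^2=n+1$, $[\Sigma_-]^2=-(n-1)$, $[\Sigma]^2=0$, the mutual intersections visible from the polygon (each adjacent pair of edges meets once, and $\Sigma_+$ meets $\Sigma_-$ once at the right-hand vertex while $\Sigma$ meets each of them once along the left/bottom), and finally the adjunction/Chern data $c_1(X_{n,1})\cdot[\Sigma_\bullet]=2+[\Sigma_\bullet]^2$ for each of these embedded symplectic spheres. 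In the odd case, writing $[\Sigma]=B$, $[\Sigma_-]=A+\lambda B$, and imposing $[\Sigma_-]^2=2\lambda=-2k$ gives $[\Sigma_-]=A-kB$; then $[\Sigma_+]=A+\mu B$ with $[\Sigma_+]^2=2\mu=n+1=2k+2$ gives $\mu=k+1$, and one checks $[\Sigma_+]\cdot[\Sigma_-]=1$ as required. In the even case, writing $[\Sigma]=H-E$ (the only primitive square-zero class with $c_1\cdot=2$, up to sign, after the diffeomorphism identifications), and solving the analogous system yields $[\Sigma_+]=(k+1)H-kE$ and $[\Sigma_-]=(1-k)H+kE$; these satisfy $[\Sigma_+]^2=(k+1)^2-k^2=2k+1=n+1$, $[\Sigma_-]^2=(1-k)^2-k^2=1-2k=-(n-1)$, $[\Sigma]^2=0$, and $[\Sigma_+]\cdot[\Sigma]=(k+1)+k=2k+1$... which I would instead arrange to equal $1$ by a correct normalization, so the system must be solved carefully with the right intersection pattern read from the diagram. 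All of these are finite linear algebra problems over $\mathbb{Z}$ with a unique solution up to the residual diffeomorphisms (swap of the two $S^2$ factors in the even-over-odd ambiguity, or the reflections of $X_1$ fixing the ruling).

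The main obstacle I anticipate is not any single computation but rather bookkeeping the normalizations consistently: fixing the generator conventions for $H_2$, the orientation of the $S^2$-factors, and the sign of $E$ so that the stated classes come out exactly as written (in particular getting $[\Sigma]\cdot[\Sigma_+]=1$ rather than $n+1$ — which tells me $\Sigma$ and $\Sigma_+$ meet in one point, not that they are ``parallel'', and constrains the signs), and doing so uniformly in the two parity cases. Once the ruling structure is in hand, the rest is the observation that the edges of a toric (post-nodal-trade) polygon are sections and fibers whose classes are forced by their self- and mutual-intersection numbers together with adjunction; this is standard and I would cite \cite{Ev23:Book} for the translation between almost toric base diagrams and homology. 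I would also remark, as the paper does, that \cref{lemma:compactification} gives an independent route: a standard neighborhood of $\Sigma_-$ is exactly $(V_{-(n-1)},\tau)$, and compactifying it by the symplectic cut is what produces $X_{n,1}$, so the diffeomorphism type and even the symplectic areas fall out of that lemma with $n$ there replaced by $n-2$.
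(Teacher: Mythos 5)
Your overall strategy (establish that $X_{n,1}$ is a rational ruled surface with $b_2=2$, use a parity argument to decide between $S^2\times S^2$ and $X_1$, then pin down the classes by solving the linear system coming from self-intersections, mutual intersections and adjunction) is the same as the paper's, and the linear algebra in both parity cases comes out right. However, your justification of the first step has a genuine gap: $\Gamma_{n,1}$ does not have ``two interior nodes whose branch cuts can be traded back to corners.'' It has a single node, and its branch cut terminates at the origin, which is a $\tfrac{1}{n^2}(1,n-1)$-orbifold corner (the edges $(0,1)$ and $(n^2,n-1)$ have determinant $-n^2$); a reverse nodal trade there would recreate the orbifold singularity, not a Delzant corner, so you cannot convert $\Gamma_{n,1}$ into an honest toric trapezoid this way. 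The paper instead applies McDuff's classification to the embedded symplectic $(n+1)$-sphere $\Sigma_+$ and reads simple connectivity and $b_2=2$ off the diagram; alternatively (as its follow-up remark notes, and closest in spirit to what you want) one undoes the \emph{mutations} of \cref{Section-1-Construction} to reach a standard toric fibration. Your argument is repaired by either of these, or by applying McDuff's ruling theorem directly to the square-zero symplectic sphere $\Sigma$.

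Two smaller points. First, your computation $[\Sigma_+]\cdot[\Sigma]=(k+1)+k$ in the even case is an arithmetic slip: with $E\cdot E=-1$ one gets $\bigl((k+1)H-kE\bigr)\cdot(H-E)=(k+1)-k=1$, so the stated classes already satisfy the required intersection pattern and no renormalization is needed. Second, your closing remark that \cref{lemma:compactification} applies to a neighborhood of $\Sigma_-$ ``with $n$ replaced by $n-2$'' is the right idea but requires knowing that the complement of $\nu\Sigma_-$ in $X_{n,1}$ is the standard disc-bundle compactification, which is not immediate from the diagram; the paper only uses that lemma in the other direction, in the proof of \cref{thrm:Kho}.
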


\begin{proof}
    Since $X_{n,1}$ contains the embedded symplectic $(n+1)$-sphere $\Sigma_+$, McDuff's classification result \cite{McD:rat_rul, FrMcD96:classrat_book} shows that $X_{n,1}$ is either a scaling of $\mathbb{C}P^2$ or a blow-up of a ruled symplectic surface. 
    However, the geometry of the almost toric base diagram defining $X_{n,1}$ implies that $X_{n,1}$ is simply connected and furthermore has second Betti number $b_2(X_{n,1})=2$.\footnote{How to read of all this information from the almost toric base diagram is discussed in \cite{Sym02:Fourtwo}.}
    This shows that $X_{n,1}$ is either $(S^2\times S^2, \omega_{a,b})$ or $(X_1,\omega_{h,\mu})$.
    
    Now, we show the claim about the homology classes. 
    We will consider the case in which $n=2k$ is even. 
    The odd case follows similarly. 
    Since the intersection form of $S^2\times S^2$ is even, $X_{2k,1}$ cannot be diffeomorphic to $S^2\times S^2$. Therefore, we know that $X_{2k,1}$ is $(X_1,\omega_{h,\mu})$. 
    By \cite[Theorem 4.2]{LiLi02:sympgenus}, or a direct calculation involving the adjunction formula and the self-intersection number, we can assume that $[\Sigma_+]=(k+1)H-kE$. Now assume that $[\Sigma_-]=aH-bE\in H_2(X_1;\mathbb{Z})$. Then we have 
    \begin{equation*}
        \left\{\begin{array}{r c c c l}
            -2k+1 & = & [\Sigma_-]^2 & = & a^2-b^2\\
            -2k+3 & = & c_1(\Sigma_-) & = & 3a-b\\
            1 & = & [\Sigma_-][\Sigma_+] & = & (k+1)a-kb
        \end{array}\right.
    \end{equation*}
    where the last equation comes from that fact that $\Sigma_+$ and $\Sigma_-$ intersect once positively. 
    Solving this overdetermined system of equations immediately yields $[\Sigma_-]=(-k+1)H+kE$. 
    Repeating the same computation for $\Sigma$ shows $[\Sigma]=H-E$.
\end{proof}

\begin{remark}
    Another way to prove \cref{Lemma_Compactification_S2S2_X1} is to notice that the almost toric base diagram $\Gamma_{n,1}$ corresponds to the almost toric base diagrams considered in the beginning of this section, compare \cref{fig:pinwheel_la_b_(l+1)a}, via a reflection. 
    Going "back" through the mutations used to construct Lagrangian pinwheels shows that the almost toric fibration defined by~$\Gamma_{n,1}$ is related to a standard toric fibration of $(S^2\times S^2,\omega_{a,b})$ or $(X_1,\omega_{h,\lambda})$ via a sequence of mutations and nodel trades, which shows that the symplectic manifold defined by these fibrations are symplectomorphic.  
\end{remark}

\begin{remark}
\label{Remark_Homology_class_pinwheel_compactification}
    The homology class represented by the Lagrangian pinwheel appearing as a visible Lagrangian over the branch cut in $X_{n,1}$ can be deduced from the almost toric base diagram \cref{fig:atfrationalhomologyball_compactified_ellipsoids} and \cref{Lemma_Compactification_S2S2_X1}. 
    In the case $n=2k+1$ the $L_{2k+1,1}$-pinwheel represents the homology class $A+kB \in H_2(S^2\times S^2;\mathbb{Z}_{2k+1})$ and in the case $n=2k$ the $L_{2k,1}$-pinwheel represents the homology class $kH +(k+1)E \in H_2(X_1;\mathbb{Z}_{2k})$.
\end{remark}

The next step is to investigate the symplectic form that we obtain by the compactification procedure. By a nodal slide we can always make the branch cut as short as we like and this will only change the symplectic manifold defined by the almost toric base diagram by a symplectomorphism \cite{Sym02:Fourtwo}. 
Therefore, we will assume in the following that the branch cut is negligibly short.

\begin{lemma}
\label{lemma_computing_sizes_compactification}
    The compactification given by the symplectic manifold $(X_{2k+1,1}(\alpha,\beta),\omega_{\alpha,\beta})$ is symplectomorphic to $(S^2\times S^2,\omega_{a,b})$, where 
    \begin{equation*}
        a=(k+1)\beta - k\alpha \text{ and } b=\beta + \alpha,
    \end{equation*}
    whereas the symplectic manifold $(X_{2k,1}(\alpha,\beta),\omega_{\alpha,\beta})$ is symplectomorphic to $(X_1,\omega_{h,\mu})$, where
    \begin{equation*}
        h=(k+1)\beta - (k-1)\alpha \text{ and } \mu=k\beta - k\alpha.
    \end{equation*}
\end{lemma}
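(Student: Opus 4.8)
The plan is to compute the areas of suitable symplectic spheres in $X_{n,1}(\alpha,\beta)$ directly from the almost toric base diagram $\Gamma_{n,1}(\alpha,\beta)$, and then invert the relations from \cref{Lemma_Compactification_S2S2_X1}. The key observation is that the symplectic area of a visible symplectic sphere lying over an edge of the toric boundary is (up to the usual affine normalization) the affine length of that edge, which in turn is an explicit function of the parameters $\alpha$ and $\beta$ through the defining vectors $\alpha(n^2,n-1)$, $\beta(0,1)$ and the slopes of the remaining edges. Since \cref{Lemma_Compactification_S2S2_X1} already identifies the homology classes $[\Sigma_+],[\Sigma_-],[\Sigma]$ in the standard basis, evaluating $\omega_{\alpha,\beta}$ on these classes gives a linear system in $a,b$ (respectively $h,\mu$) whose solution is the claimed formula.

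Concretely, I would first treat the odd case $n=2k+1$, so $X_{2k+1,1}(\alpha,\beta)\cong(S^2\times S^2,\omega_{a,b})$ with $[\Sigma_+]=A+(k+1)B$, $[\Sigma_-]=A-kB$, $[\Sigma]=B$. The sphere $\Sigma$ lies over the vertical edge spanned by $\beta(0,1)$, so its area is (proportional to) $\beta$; reading off $\omega_{a,b}(\Sigma)=b$ and $\omega_{a,b}(B)=b$ I would pin down the normalization so that $b=\beta+\alpha$ — here the contribution $\alpha$ comes from the fact that the top edge of $\Gamma_{n,1}$ in the $(1,0)$-direction, over which the other representative of $B$ (together with part of $\Sigma_+$) lives, has affine length determined jointly by $\beta(0,1)$ and $\alpha(n^2,n-1)$. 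Then, using $\Sigma_-$ over the edge in the $(n+1,1)=(2k+2,1)$-direction and $\Sigma_+$ over the $(1,0)$-edge, I would write $\omega_{a,b}(\Sigma_-)$ and $\omega_{a,b}(\Sigma_+)$ as affine lengths, obtaining two more equations; solving the resulting system yields $a=(k+1)\beta-k\alpha$ and $b=\beta+\alpha$. A sanity check is that the constraint $\beta>(n-1)\alpha=2k\alpha$ should translate exactly into $a>0$ (equivalently $a/(k+1)>b$ up to the swap of factors), which is consistent with \cref{lemma:compactification}(2). The even case $n=2k$ is entirely analogous: with $[\Sigma_+]=(k+1)H-kE$, $[\Sigma_-]=(1-k)H+kE$, $[\Sigma]=H-E$ one gets a $2\times 2$ system for $h,\mu$ whose solution is $h=(k+1)\beta-(k-1)\alpha$, $\mu=k\beta-k\alpha$, and the admissibility $\beta>(2k-1)\alpha$ should match $\mu>\frac{k}{k+1}h$ from \cref{lemma:compactification}(1).

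Alternatively — and this is probably the cleanest way to fix all normalizations at once — I would use the remark following \cref{Lemma_Compactification_S2S2_X1}: running the mutations and nodal trades backwards carries $\Gamma_{n,1}(\alpha,\beta)$ to a standard toric moment polygon for $S^2\times S^2$ or $X_1$, and mutations/nodal trades/nodal slides preserve the affine lengths of the boundary edges up to $SL(2,\mathbb{Z})$, hence preserve the areas of the corresponding spheres. Tracking the two "long" edges through this sequence directly expresses the side lengths of the standard rectangle (resp. trapezoid) — i.e. the areas $a,b$ (resp. $h,\mu$) — in terms of $\alpha$ and $\beta$.

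The main obstacle I anticipate is purely bookkeeping: getting the affine-length normalizations and the precise linear combinations of $\alpha,\beta$ right, since the edges of $\Gamma_{n,1}(\alpha,\beta)$ have non-unit primitive directions (e.g. $(n^2,n-1)$ and $(n+1,1)$) and one must be careful that the two symplectic cuts are performed at the sizes encoded by $\alpha$ and $\beta$ as in \cref{Definition_rational_homology_ball_embeddings}, not at some rescaled values. Once the homology classes from \cref{Lemma_Compactification_S2S2_X1} are in hand, though, the computation is a short and forced linear algebra exercise, and the admissibility inequalities provide a built-in consistency check.
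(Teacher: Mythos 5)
Your proposal is correct and follows essentially the same route as the paper: the paper likewise reads off $\omega_{\alpha,\beta}(\Sigma_+)=(n+1)\beta+\alpha$ and $\omega_{\alpha,\beta}(\Sigma)=\alpha+\beta$ from the affine geometry of $\Gamma_{n,1}(\alpha,\beta)$ and then solves the resulting linear system using the homology classes from \cref{Lemma_Compactification_S2S2_X1}. Two small inaccuracies that do not affect the argument: the $\alpha$ in $b=\beta+\alpha$ comes from the affine length of the bottom edge in the direction $(n^2,n-1)$, which together with the vertical edge carries the fibre class $\Sigma$ (not from the top $(1,0)$-edge), and the admissibility condition $\beta>(n-1)\alpha$ translates into $\omega_{\alpha,\beta}(\Sigma_-)=a-kb>0$ for $n=2k+1$, not into $a>0$ or $a/(k+1)>b$.
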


\begin{proof}
    The point in which the edges corresponding to the spheres $\Sigma_+$ and $\Sigma_-$ intersect in the almost toric base diagram $\Gamma_{n,1}(\alpha,\beta)$ is
    \begin{equation*}
        \big((n+1)\beta + \alpha,\beta\big).
    \end{equation*}
    Hence
    \begin{equation*}
        \omega_{\alpha,\beta}(\Sigma_+)=(n+1)\beta + \alpha \quad\text{and}\quad \omega_{\alpha,\beta}(\Sigma)=\alpha+\beta.
    \end{equation*}
    The claim now follows readily from \cref{Lemma_Compactification_S2S2_X1}.
\end{proof}

\begin{remark}
\label{remark:sizes_compactification}
    We chose to use information about $\Sigma_+$ and $\Sigma$ to calculate the weights of the symplectic forms in \cref{lemma_computing_sizes_compactification}. 
    We could have used another pair of information to calculate the forms. 
    Such a pair of information is, for example, given by the fact that the total volume of $(X_{n,1}(\alpha,\beta),\omega_{\alpha,\beta})$ is equal to $\frac{n+1}{2}\beta^2 + \alpha\beta - \frac{n-1}{2}\alpha^2$ and that the area of $\Sigma_-$ is $\beta-(n-1)\alpha$, which both can be easily calculated from the geometry of the almost toric base diagram. 
    These two quantities will play a role in the proof of \cref{thrm:nonsqueezing}. 
\end{remark}

\section{Obstructing liminal Pinwheels}
\label{sec:obstruction}

In this section we prove the obstructive parts of \cref{thrm:A} and \cref{thrm:B}. 
For the rest of this section $L_n$ will denote a liminal pinwheel and we assume that $n\geq 3$. 

Recall that by \cref{Lemma_Rational_Blowup_in_S2S2_X1} blowing up $L_n$ produces the manifold $X_n$. The main point of this section is to determine the symplectic manifold $(X_n,\omega_{h,\boldsymbol{\mu}})$. 
In particular, we want to determine the cohomology class $\omega_{h,\boldsymbol{\mu}}$, as we can translate the obstructions of Seiberg--Witten theory on $[\omega_{h,\boldsymbol{\mu}}]\in H^2_{dR}(X_n;\mathbb{R})$, such as exceptional classes having positive $\omega_{h,\boldsymbol{\mu}}$-area, to obstructions to the embeddings of liminal pinwheels and the corresponding rational homology balls.

The idea is the following. 
Assuming that $L_n$ is a liminal pinwheel, we determine the homological orthogonal complement to $[L_n]$, which we denote by $\mathcal{W}_n$. 
Now, rationally blowing up the liminal pinwheel $L_n$ produces the manifold $X_n$ and introduces a $C_n$-configuration, as discussed in \cref{Section_Blowing_Up_Pinwheels}. 
This $C_n$-configuration consists of $n-1$ symplectic spheres. 
Therefore, two more pieces of information are needed to determine the cohomology class $[\omega_{h,\boldsymbol{\mu}}]\in H^2_{dR}(X_n;\mathbb{R})$. 
Denoting the homological configuration that corresponds to $\mathcal{W}_n$ under the rational blow-up/blow-down by $\mathcal{D}_n$, determining the homological configuration $\mathcal{F}_n=\{\mathcal{C}_n,\mathcal{D}_n\} \subseteq H_2(X_n;\mathbb{Z})$ allows us to determine the cohomology class $[\omega_{h,\boldsymbol{\mu}}]$ completely.

This section relies heavily on results of Li and Li \cite{LiLi02:sympgenus} on characterizing homology classes represented by smoothly embedded spheres. 
The \textit{liminal} condition introduced in  \cref{def:liminal_pinwheel} is exactly the condition that allows to use the results of \cite{LiLi02:sympgenus} in the blown up space.

After determining the homology classes in the complement of the $C_n$-configuration introduced by the rational blow-up, routine calculations on homology also determine the homology classes of the $C_n$-configuration itself. 
Finally, everything is put together in Theorem \ref{thrm:all_homology_classes}
and Corollary \ref{cor: mu_i periods} where we compute the cohomology class $\omega_{h,\boldsymbol{\mu}}$ with respect to a standard cohomological basis of $X_n$.

\subsection{Preparations}

First, we want to compute the homology classes admitting representatives disjoint from the pinwheel, since these classes will also persist under the rational blow-up of the pinwheel. 
In Theorem \ref{thrm:pinwheel_hom/orth_comp} of Appendix \ref{Appendix:orthogonal complement}, we show that these classes are precisely the classes that pair trivially modulo $n$ with the class of the pinwheel and so it is straightforward to compute them: 

\begin{lemma}\label{lem:hom_complement}
For $n=2k+1$, the orthogonal complement, with respect to the intersection product, of $[L_n]=A+kB \in H_2(S^2 \times S^2; \mathbb{Z}_{2k+1})$ is generated by the classes
\[W_1\vcentcolon= A+(k+1)B, W_2\vcentcolon= 2A+B \subseteq H_2(S^2 \times S^2; \mathbb{Z}_{2k+1}).\]
For $n=2k$, the orthogonal complement of $[L_n]=kH+(k+1)E\in H_2(X_1; \mathbb{Z}_{2k})$ is generated by 
\[W_1\vcentcolon=(k+1)H-kE,W_2\vcentcolon=2H \subseteq H_2(X_1; \mathbb{Z}_{2k}).\]
We denote the orthogonal complement by $\mathcal{W}_{n}\vcentcolon=\langle W_1,W_2\rangle$.
\end{lemma}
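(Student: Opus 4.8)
The statement is a finite linear-algebra computation over $\mathbb{Z}_n$, so the plan is to carry it out directly. First I would record the intersection forms, which reduce mod $n$ from the integral ones: on $H_2(S^2\times S^2;\mathbb{Z})$ one has $A\cdot A=B\cdot B=0$, $A\cdot B=1$, and on $H_2(X_1;\mathbb{Z})$ one has $H\cdot H=1$, $E\cdot E=-1$, $H\cdot E=0$. Since $H_1$ vanishes in both cases, $H_2(\,\cdot\,;\mathbb{Z}_n)$ is free of rank two over $\mathbb{Z}_n$, and the intersection product with $[L_n]$ is the linear functional $\lambda\colon H_2(\,\cdot\,;\mathbb{Z}_n)\to\mathbb{Z}_n$ given by $\lambda(xA+yB)=kx+y$ when $n=2k+1$, and by $\lambda(xH+yE)=kx-(k+1)y$ when $n=2k$. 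Via \cref{thrm:pinwheel_hom/orth_comp}, $\mathcal{W}_n=\ker\lambda$ is exactly the set of classes carrying a representative disjoint from $L_n$, so it remains to describe this kernel; note however that the statement of the lemma itself is purely about $\ker\lambda$ and needs no input beyond the above.

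Next I would observe that $\lambda$ is surjective — it already takes the value $1$ on $B$ in the odd case, while in the even case its image is the subgroup of $\mathbb{Z}_{2k}$ generated by $k$ and $k-1$, hence all of $\mathbb{Z}_{2k}$ since $k-(k-1)=1$. Therefore $\mathcal{W}_n=\ker\lambda$ has order exactly $n$. A one-line check shows $W_1\in\mathcal{W}_n$, namely $k\cdot 1+(k+1)=2k+1\equiv 0\pmod{2k+1}$ in the odd case and $k(k+1)-(k+1)(-k)=2k(k+1)\equiv 0\pmod{2k}$ in the even case. Since the two coordinates of $W_1$ are coprime (they are $1$ and $k+1$, resp.\ $k+1$ and $-k$), the class $W_1$ has order $n$ in $H_2(\,\cdot\,;\mathbb{Z}_n)$, so the cyclic group $\langle W_1\rangle$ has order $n$ and, sitting inside the order-$n$ group $\mathcal{W}_n$, must coincide with it. Thus $\mathcal{W}_n=\langle W_1\rangle$.

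Finally I would check $W_2\in\mathcal{W}_n$, which is again one line ($\lambda(W_2)=2k+1$, resp.\ $2k$), and record the identity $W_2\equiv 2W_1\pmod n$ in both cases. So $W_2$ is a redundant generator, which I nonetheless keep in the statement because these particular integral classes are the ones appearing in the liminal hypothesis of \cref{def:liminal_pinwheel} and used throughout \cref{sec:obstruction}. I do not foresee any genuine obstacle: the computation is elementary, and the only point worth a moment's attention is confirming that $\langle W_1,W_2\rangle$ exhausts the whole complement rather than a proper subgroup, which is immediate from the order count $|\mathcal{W}_n|=n$ together with the fact that $W_1$ already has order $n$ there.
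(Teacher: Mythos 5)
Your computation is correct and is essentially the paper's argument: the paper gives no proof beyond invoking \cref{thrm:pinwheel_hom/orth_comp} and calling the rest "straightforward," and the straightforward part is exactly the linear functional $\lambda$ you write down, together with the checks $\lambda(W_1)=\lambda(W_2)=0$.

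One point needs repair, however. You treat $\mathcal{W}_n$ as the kernel of $\lambda$ inside $H_2(\,\cdot\,;\mathbb{Z}_n)$, an order-$n$ cyclic group, and accordingly declare $W_2$ a redundant generator since $W_2\equiv 2W_1\pmod n$. But in the paper $[L]^\perp$ is by definition a subgroup of \emph{integral} homology (see \cref{def:orth_comp}), i.e.\ an index-$n$ sublattice of $H_2(X;\mathbb{Z})\cong\mathbb{Z}^2$, and this is how $\mathcal{W}_n$ is used afterwards: in \cref{sec:obstruction} the rank-two lattice $\langle W_1,W_2\rangle$ is carried over to a rank-two lattice $\langle D_1,D_2\rangle\subseteq H_2(X_n;\mathbb{Z})$. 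Under that reading $W_2$ is not redundant — $\langle W_1\rangle_{\mathbb{Z}}$ has rank one, so it is very far from being all of $[L_n]^\perp$ — and "the reductions of $W_1,W_2$ generate $\ker\lambda$" does not by itself imply "$W_1,W_2$ generate the preimage lattice" (compare $\mathbb{Z}\times n\mathbb{Z}\subseteq\mathbb{Z}^2$, whose mod-$n$ reduction is generated by one element). The fix is one line and uses data you already have: $W_1,W_2\in[L_n]^\perp$, the lattice they span has index $\lvert\det\rvert$ equal to $\lvert 1-2(k+1)\rvert=2k+1$ in the odd case and $\lvert 2k\rvert=2k$ in the even case, i.e.\ index $n$ in $H_2(X;\mathbb{Z})$, while $[L_n]^\perp$ also has index $n$ by the surjectivity of $\lambda$ that you established; two nested subgroups of the same finite index coincide. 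With that sentence added, your argument proves the lemma in the form the rest of the paper actually relies on.
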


\begin{remark}
Notice that in either case, the classes that generate $\mathcal{W}_n$ are represented by smoothly embedded spheres, see \cite[Theorem C]{LiLi02:sympgenus}.
\end{remark}

As we explain in Appendix \ref{Appendix:orthogonal complement}, we can represent the classes $W_1,W_2$ by embedded oriented submanifolds disjoint from $L_n$. 
Therefore, we can find an embedding of the sublattice $\mathcal{W}_n=\langle W_1,W_2\rangle$ in $H_2(X_n;\mathbb{Z})$. 
Determining how this sublattice looks like in $H_2(X_n;\mathbb{Z})$ means finding its generators in $H_2(X_n;\mathbb{Z})$. 
For $j=1,2$ let $D_j$ be the image of $W_j$ in $H_2(X_n;\mathbb{Z})$.
For a worked out example of this construction in the specific case of a Lagrangian $\mathbb{R}P^2$ compare \cite[Lemma 2.2.1]{SmSh20}.
We now want to determine $D_1$ and $D_2$.
This is exactly where the liminality condition comes into play, and to do so we will need some auxiliary results.

\begin{theorem}\label{theorem:LiLi}\emph{(\cite[Theorem D]{LiLi02:sympgenus})}
Consider two classes, $S,S'\in H_2(X_n;\mathbb{Z})$ such that they are both represented by smooth spheres, $S^2=S'^2\geq -1$, they are either both \textit{characteristic}, i.e.\ Poincaré dual to an integral lift of the second Stiefel--Whitney class $w_2$, or \textit{ordinary}, i.e.\ not characteristic, and they have the same divisibility.
Then, there exists a diffeomorphism $\phi: X_n\rightarrow X_n$ such that $\phi_* S=S'$.
\end{theorem}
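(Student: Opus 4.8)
The plan is to reduce the statement to a fact about the action of the orientation-preserving diffeomorphism group of $X_n=\mathbb{C}P^2\#n\overline{\mathbb{C}P^2}$ on $H_2(X_n;\mathbb{Z})$ with its diagonal intersection form $\langle 1\rangle\oplus n\langle -1\rangle$: one wants to show that the (proper) subgroup of isometries that is actually realized by diffeomorphisms already acts transitively on the set of sphere classes of a fixed square $m\geq -1$, fixed divisibility, and fixed type (characteristic versus ordinary). This is exactly the content of Li--Li's argument, and I would follow its two-step shape.

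First I would assemble the pool of geometrically realized isometries. For an exceptional class $E$ (i.e.\ $E^2=-1$ represented by a smoothly embedded sphere — in a standard basis one of the $E_i$, or $H-E_i-E_j$, etc.) the reflection $x\mapsto x+2(x\cdot E)E$ is induced by an orientation-preserving diffeomorphism, obtained by a blow-down/blow-up argument; permutations of the exceptional summands and sign changes $E_i\mapsto -E_i$ are realized in the same way, and compositions of these yield the classical Cremona transformations $H\mapsto 2H-E_1-E_2-E_3$, $E_i\mapsto H-E_j-E_k$. (One may also use reflections in embedded $(-2)$-spheres, realized by generalized Dehn twists, but for the applications in this paper the $(-1)$-reflections and permutations suffice.) Call the group generated by these isometries $\Gamma\leq \mathrm{Aut}(H_2(X_n;\mathbb{Z}))$.

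Second, the lattice-theoretic core: show $\Gamma$ acts transitively on $\{S:S^2=m,\ S\ \text{ordinary},\ \mathrm{div}(S)=d\}$ and on the corresponding characteristic set, for every fixed $m\geq -1$ and $d\geq 1$. Writing $S=aH-\sum b_iE_i$ in a standard basis, I would run the standard reduction: using permutations and sign changes arrange $b_1\geq b_2\geq\cdots\geq b_n\geq 0$, and if $a<b_1+b_2+b_3$ apply the Cremona transformation based at the first three exceptional classes, which strictly decreases $a$ while keeping $S$ ``effective''; since $S^2\geq -1$ bounds the $b_i$, this process terminates at a reduced class. One then checks that reduced classes of square $\geq -1$ form a short explicit list (roughly $dH$, $dH-dE_1$, $dE_1$, $2dH-dE_1$, and $H-E_1-\cdots-E_n$ in the characteristic case, together with the zero class), and that on this list the triple $(m,\text{type},d)$ determines the class. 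Matching the reductions of $S$ and $S'$ then produces $\gamma\in\Gamma$ with $\gamma(S)=S'$, and lifting $\gamma$ through the realization results of the first step gives the desired $\phi\in\mathrm{Diff}^+(X_n)$.

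The main obstacle is the bookkeeping in the reduction step: verifying that the Cremona reduction both terminates and lands on a genuinely minimal normal form, and handling the characteristic case separately, since $w_2(X_n)\neq 0$ changes both the reduced list and the parity constraints on the coefficients. The hypothesis $S^2\geq -1$ is precisely what rules out the rigid negative classes (chains of $(-2)$-spheres, etc.) on which transitivity would fail; and for the use made of this theorem in the present paper — representing $\mathcal{W}_n=\langle W_1,W_2\rangle$ inside $H_2(X_n;\mathbb{Z})$ — it is applied with $S,S'$ honest sphere classes of square $0$ or $-1$, where only the $(-1)$-reflections and permutations are needed.
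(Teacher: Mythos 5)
This statement is quoted verbatim as \cite[Theorem D]{LiLi02:sympgenus}; the paper gives no proof of it, so there is no internal argument to compare yours against. What follows is therefore an assessment of your sketch as a reconstruction of Li--Li's proof.

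The architecture is right --- Li--Li do work with the subgroup of isometries of $H_2(X_n;\mathbb{Z})$ realized by diffeomorphisms (reflections in $(-1)$- and $(-2)$-sphere classes, permutations, sign changes) followed by a Cremona-type reduction to a normal form --- but there is a genuine gap in your second step: you never use the hypothesis that $S$ and $S'$ are represented by smoothly embedded spheres. Reduced classes of square $\geq -1$ do \emph{not} form a short list on which $(S^2,\ \mathrm{divisibility},\ \text{type})$ is a complete invariant. For instance, $3H-E_1-\cdots-E_8$ in $X_9$ is reduced, primitive, ordinary and of square $1$, exactly like $H$, yet no diffeomorphism carries one to the other: the former equals $-K+E_9$ and has minimal genus $1$, so it is not a sphere class at all. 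The sphere hypothesis is precisely what excludes such classes, and deciding which reduced classes have minimal genus $0$ is the hard, Seiberg--Witten-theoretic content of \cite{LiLi02:sympgenus} (the ``symplectic genus'' of their Theorems A--C). Without importing that input, the matching of normal forms does not close. A smaller point: your closing remark that in this paper the theorem is only applied to classes of square $0$ or $-1$, where ``only the $(-1)$-reflections and permutations are needed,'' is not accurate --- it is applied in \cref{prop: determining d_i} to $D_1$ and $D_2$, which have squares $n+1$ and $4$, so the full strength of the statement (and hence of the reduction together with the genus input) is what is actually being used.
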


We now show that we can indeed apply Theorem \ref{theorem:LiLi} to one of the classes $D_1$ or $D_2$. 
For this we will show that one of the $D_i$ can be assumed to be primitive and ordinary.

\begin{lemma}\label{lemma:conditions_Di}
Let $\mathcal{D}_n=\langle D_1,D_2\rangle$ be the homological configuration in $H_2(X_n;\mathbb{Z})$, as above, which is disjoint from a $C_n$-configuration coming from blowing up a liminal $L_{n}$-pinwheel. 
By the liminality condition, there exists an embedded sphere representing, say $D_1$, that is disjoint from the $C_n$-configuration.
Then this $D_1$ is ordinary and both $D_1$ and $D_2$ primitive.
\end{lemma}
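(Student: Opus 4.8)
The plan is to read off everything about $D_1$ and $D_2$ from the corresponding data of $W_1,W_2$, using that the $D_i$ are supported in $X_n-\nu C_n$, which the rational blow-up identifies symplectically with $X-\nu L_n$ (where $X=S^2\times S^2$ or $X_1$). In particular $D_i^2=W_i^2$, $D_1\cdot D_2=W_1\cdot W_2$, $D_i\cdot[S_j]=0$ for every sphere $S_j$ of $C_n$, and, by the same bookkeeping as in the proof of \cref{prop: discrepancies}, $c_1(X_n)\cdot D_i=c_1(X)\cdot W_i$. For primitivity I would show that $\langle D_1,D_2\rangle$ is \emph{saturated} in $H_2(X_n;\mathbb{Z})$: from the long exact sequence of the pair $(X_n,X_n-\nu C_n)$, using that $\nu C_n$ is simply connected and $\partial(\nu C_n)=L(n^2,n-1)$ is a rational homology sphere, the inclusion induces an injection $H_2(X_n-\nu C_n;\mathbb{Z})\hookrightarrow H_2(X_n;\mathbb{Z})$ whose image is exactly $\langle C_n\rangle^\perp$. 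Since $\langle D_1,D_2\rangle$ is this image (it is the image of $\mathcal{W}_n$, the full orthogonal complement of $[L_n]$, compare \cref{Appendix:orthogonal complement}), we get $\langle D_1,D_2\rangle=\langle C_n\rangle^\perp$, which is an orthogonal complement and hence saturated; as $D_1,D_2$ are a basis of it, both are primitive. (As a consistency check, the divisibility $d$ of $D_1$ satisfies $d^2\mid D_1^2$ and $d\mid D_1\cdot D_2$, and the values of \cref{lem:hom_complement} force $d=1$; the same works directly for $D_2$ when $n$ is odd.)

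The substantive point is ordinariness of $D_1$, i.e.\ $\mathrm{PD}(D_1)\not\equiv w_2(X_n)\pmod 2$. The symplectomorphism $X_n-\nu C_n\simeq X-\nu L_n$ identifies the restrictions of $w_2(X_n)$ and $w_2(X)$, and $\mathrm{PD}(D_1)$ restricts to $\mathrm{PD}(W_1)$ on $X-\nu L_n$; so if $D_1$ were characteristic then $\mathrm{PD}(W_1)-w_2(X)$ would lie in the kernel of $H^2(X;\mathbb{Z}/2)\to H^2(X-\nu L_n;\mathbb{Z}/2)$, which by Lefschetz duality is generated by the mod $2$ fundamental class $[L_n]_2$ of the pinwheel — this kernel is $0$ when $n$ is odd and is spanned by $(kH+(k+1)E)\bmod 2$ when $n=2k$. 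A direct check shows $W_i$ is never characteristic in $X$, and that $\mathrm{PD}(W_i)\neq w_2(X)+[L_n]_2$ in every case \emph{except} $n=2k$ with the embedded sphere of the liminality hypothesis lying in the class $(k+1)H-kE$ (rather than $2H$). Outside that branch one reaches a contradiction and concludes that $D_1$ is ordinary; in particular, for $n$ odd both $D_1$ and $D_2$ come out ordinary for free.

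In the remaining branch I would additionally invoke that $D_1$ is represented by an embedded sphere with $D_1^2=2k+1$: van der Blij's congruence forces a characteristic class to have square $\equiv\sigma(X_{2k})=1-2k\pmod 8$, hence $4\mid k$, which disposes of all $k\not\equiv 0\pmod 4$. The surviving subfamily (roughly $n\equiv 0\pmod 8$ together with the $(k+1)H-kE$ branch) is the one place where this bookkeeping is not by itself decisive, and I expect it to be the main obstacle. I would clear it either by a finer analysis of the glue vectors of the embedding $\langle C_n\rangle\oplus\langle C_n\rangle^\perp\hookrightarrow H_2(X_n;\mathbb{Z})$, or by shifting the burden to $D_2$: the mod $2$ argument of the previous paragraph applies to $D_2$ with no exceptions, so $D_2$ is always ordinary, and (being primitive with $D_2^2=4$ and $c_1(X_n)\cdot D_2=6$, hence satisfying the adjunction equality) it is sphere-representable, which is exactly what is needed to apply \cref{theorem:LiLi} in the subsequent determination of $\mathcal{F}_n$.
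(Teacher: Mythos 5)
Your primitivity argument is correct and is essentially the paper's, repackaged: the paper likewise observes that a proper divisor $P$ of $D_j$ would be orthogonal to all the $S_i$, hence would lie in the image of $\mathcal{W}_n$, contradicting that $W_j$ is a generator of that lattice. Identifying $\langle D_1,D_2\rangle$ with the saturated sublattice $\langle C_n\rangle^{\perp}$ via the exact sequence of the pair is a clean way to say the same thing, and your divisibility cross-check is consistent with \cref{lem:hom_complement}.

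The genuine gap is in the ordinariness of $D_1$, exactly on the branch you flagged, and it is worse than you estimate. For $n=2k$ one checks that $W_1=(k+1)H-kE$ satisfies $\mathrm{PD}(W_1)\equiv w_2(X_1)+[L_{2k,1}]_2 \pmod 2$ for \emph{both} parities of $k$, so the restriction argument never decides this branch; and van der Blij gives $D_1^2=2k+1\equiv \sigma(X_{2k})=1-2k\pmod 8$, i.e.\ $4k\equiv 0\pmod 8$, which excludes only odd $k$ (not $k\not\equiv 0\pmod 4$ as you claim). Hence the entire family $n\equiv 0\pmod 4$ survives both of your tests. Your fallback of shifting the burden to $D_2$ does not close this: \cref{theorem:LiLi} requires $D_2$ to be represented by an embedded sphere in $X_n$, and the liminality hypothesis only supplies a sphere disjoint from the pinwheel in \emph{one} of the two classes; a sphere representing $2H$ in $X_1$ need not avoid the pinwheel, and satisfying the adjunction equality does not by itself make a primitive class of $X_n$ sphere-representable. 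The paper's proof sidesteps all of this with one uniform argument that you would need (or an equivalent global input) to finish the even case: since $D_1$ is represented by an embedded sphere, \cite[Corollary 4.1]{LiLi02:sympgenus} upgrades it to a symplectic sphere $D'$ for some auxiliary symplectic form; the complement of a standard neighborhood of $D'$ is then a convex filling of a lens space $L(p,1)$ with $b_2\geq 2$, hence non-minimal by McDuff's classification, hence contains an exceptional sphere $E$ with $D_1\cdot E=0$; since $E\cdot E=-1$ is odd, $D_1$ cannot be characteristic.
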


\begin{proof}
We start by showing that $D_1$ is ordinary. 
Recall that if a class $D$ is characteristic then, by definition, $D\cdot C\equiv C\cdot C \mod{2}$ for every homology class $C$. 
So to show that a class is ordinary it is therefore enough to show that it pairs trivially with a class of odd self-intersection. 
Recall that we assumed that $D_1$ is represented by an embedded sphere. 
Then, by \cite[Corollary 4.1]{LiLi02:sympgenus}, there exists a symplectic form $\omega'$, that is possibly different from the one provided by the rational blow-up, for which $D_1$ can be represented by an $\omega'$-symplectic sphere, which we denote by $D'$. 
Let $\nu D'$ be a standard symplectic neighborhood of $D'$, such that $X_n-\nu D'$ is a convex symplectic filling of $\partial( \nu D')$, which is the lens space $L(p,1)$ where $p$ denotes the self-intersection number of $D'$. 
By McDuff's classification \cite[Theorem 1.7]{McD:rat_rul} of minimal symplectic fillings, $X_n-\nu D'$ cannot be minimal since all minimal symplectic fillings have second Betti number $b_2=0$ or $b_2=1$, while $X_n-\nu D'$ has $b_2\geq 2$.
Therefore $X_n-\nu D'$ carries a symplectic exceptional sphere $E$. 
By construction $D'\cdot E=0$ and thus $D_1$ cannot be characteristic.

The proof for the case in which $D_2$ is represented by an embedded sphere disjoint from the $C_n$-configuration follows analogously.

To show the primitivity statement, we suppose that $D_j$ is not primitive, i.e.\ that there exists some class $P\in H_2(X_n;\mathbb{Z})$ and some $m\in \mathbb{Z}$ with $\lvert m\rvert\geq2$ such that $D_j=mP$. 
Since $D_j\cdot S_i=0$, where the $S_i$ correspond to the spheres in the $C_n$-configuration, we have that $P\cdot S_i=0$, which means that $P$ is in the homological complement of the $L_{n,1}$-pinwheel. This contradicts the fact that the $D_j$ correspond to the $W_j$ under the rational blow-down.
\end{proof}

\begin{remark}
As the proof of \cite[Proposition 1.4.]{BoLiWu13} shows, one can construct examples where an ordinary spherical class becomes characteristic after rationally blowing up. 
In our case, we will see that the class $[\Sigma_-]$, that is introduced by compactifying rational homology balls, as described in \cref{subsec:compactification}, becomes $(3-n)H+(n-3)E_1+E_2+\cdots+E_{n-1}-E_n$ which is also characteristic.
\end{remark}

\begin{remark}
Note that the class $D_1$ cannot be characteristic for numerical reasons: since $w_2(X_n)=H+\sum_{i=1}^n E_i$, we would have $n+1=D_1^2=w_2(D_1)\equiv 0 \mod 2$ as well as $n=w_2(S_0)=D_1\cdot S_0\equiv 0 \mod 2$ which implies a contradiction on the parity of $n$. 
\end{remark}

\cref{lemma:conditions_Di} combined with \cref{theorem:LiLi} implies: 

\begin{proposition}\label{prop: determining d_i}
Let $L_n$ be a liminal pinwheel, $\mathcal{W}_n =\langle W_1,W_2 \rangle$ its orthogonal complement, and let $X_n$ be the manifold obtained by blowing up $L_n$. 
Then, depending on whether $D_1$ or $D_2$ carries a smooth sphere, there exists a diffeomorphism $\phi$ or $\psi$ of $X_n$ such that
\begin{enumerate}
    \item 
        $\phi_*D_1= nH-(n-1)E_1-E_2-\cdots -E_{n-1}$ if $D_1$ carries an embedded sphere, or, 
    \item 
        $\psi_*D_2=3H-2E_1-E_n$ if $D_2$ carries an embedded sphere.
\end{enumerate}
\end{proposition}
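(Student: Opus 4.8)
The strategy is to apply \cref{theorem:LiLi} to whichever of the two classes $D_1, D_2$ carries an embedded sphere disjoint from the $C_n$-configuration, using \cref{lemma:conditions_Di} to verify the hypotheses of that theorem, and then to exhibit an explicit model class in the standard basis of $H_2(X_n;\mathbb{Z})$ in the correct diffeomorphism orbit.

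First I would fix notation: recall from \cref{def:liminal_pinwheel} that a liminal $L_{n,1}$-pinwheel comes with a smooth sphere disjoint from it, either in the class $W_1$ (so $D_1$ carries an embedded sphere in $X_n$) or in the class $W_2$ (so $D_2$ does). In either case \cref{lemma:conditions_Di} tells us that the relevant $D_j$ is primitive and ordinary. So it remains only to identify the right diffeomorphism orbit, i.e.\ to compute the self-intersection $D_j^2$ and to exhibit one concrete representative of an ordinary, primitive spherical class of that self-intersection in the standard basis, and then invoke \cref{theorem:LiLi} to produce the diffeomorphism.

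For case (1): the orthogonal complement computed in \cref{lem:hom_complement} shows $W_1 = A+(k+1)B$ when $n=2k+1$, with $W_1^2 = 2(k+1) = n+1 \geq -1$; likewise $W_1 = (k+1)H-kE$ when $n=2k$, with $W_1^2 = (k+1)^2 - k^2 = 2k+1 = n+1$. Since $X_n$ is obtained by blowing up $(2k+1)$ or $(2k)$ times, one checks that the candidate class $nH-(n-1)E_1-E_2-\cdots-E_{n-1}$ has self-intersection $n^2 - (n-1)^2 - (n-2) = n+1$, is represented by a smooth (in fact complex) sphere, is primitive, and — by the numerical parity argument already recorded in the remark following \cref{lemma:conditions_Di}, namely that $w_2(X_n)\cdot D_1 = n+1$ would force $n$ even while $w_2(X_n)\cdot S_0 = n$ would force $n$ even as well, contradicting the fact that not both can hold — is ordinary. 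Since $D_1$ and this candidate are both primitive ordinary spherical classes of the same self-intersection, \cref{theorem:LiLi} yields a diffeomorphism $\phi$ of $X_n$ with $\phi_* D_1$ equal to the candidate class. For case (2): $W_2^2 = 4$ in both $S^2\times S^2$ and $X_1$, the candidate $3H-2E_1-E_n$ has self-intersection $9-4-1 = 4$, is represented by a smooth sphere, is primitive, and is ordinary (it pairs nontrivially modulo $2$ with a class of odd self-intersection, e.g.\ $E_2$); again \cref{theorem:LiLi} provides $\psi$ with $\psi_* D_2$ equal to $3H-2E_1-E_n$.

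The main obstacle I anticipate is \emph{not} the existence of the diffeomorphism — that is handed to us cleanly by \cref{theorem:LiLi} once the hypotheses are checked — but rather making sure the chosen model classes are legitimate, i.e.\ that they genuinely lie in the diffeomorphism orbit of $D_j$ and not merely share the same self-intersection. The subtlety is that \cref{theorem:LiLi} also requires matching divisibility and the characteristic/ordinary dichotomy; primitivity (divisibility one) is supplied by \cref{lemma:conditions_Di}, and the ordinary condition needs the parity bookkeeping involving $w_2(X_n) = H + \sum_i E_i$ together with the fact that the spheres $S_i$ of the $C_n$-configuration must be disjoint from $D_j$. I would therefore spell out this parity computation carefully, since it is the one place where liminality (through disjointness from the whole $C_n$-configuration, not just from one sphere) is genuinely used, and it is what pins down the correct orbit unambiguously.
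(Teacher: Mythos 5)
Your proposal is correct and follows exactly the paper's route: the paper derives \cref{prop: determining d_i} in one line by combining \cref{lemma:conditions_Di} with \cref{theorem:LiLi}, and your write-up simply makes explicit the self-intersection, primitivity, ordinariness and sphericity checks for the model classes in the standard basis. One small wording slip: to see that $3H-2E_1-E_n$ is ordinary you need that it pairs \emph{trivially} (not nontrivially) mod $2$ with a class of odd square such as $E_2$ --- your example is the right one, only the stated criterion is inverted.
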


\begin{remark}\label{remark:different_expression_D1}
Even though the homology classes $D_1$ and $D_2$ of \cref{prop: determining d_i} have seemingly different expressions, after a change of coordinates we may assume that $D_1$ lives in the class $(k+1)H-kE_1$ if $n=2k+1$, or $(k+1)H-kE_1-E_n$ if $n=2k$.
\end{remark}

\begin{remark}
Here is an alternative way to prove \cref{prop: determining d_i}. 
Applying \cite[Theorem 4.2.]{LiLi02:sympgenus} directly to the class $D_i$ that is represented by an embedded sphere one can assume that $D_i$ is equivalent to a specific class and then show that this class is Cremona equivalent to one of the two classes in \cref{prop: determining d_i}.
\end{remark}

Surprisingly, just determining one of the homology classes of the $D_i$ enables us to determine the homology class $\mathcal{C}_n$ of the $C_n$-configuration.
For the convenience of the reader, we will break the proof into two lemmas. 
In what follows we will only provide proofs for the case in which $D_2$ is represented by an embedded sphere, as the $D_1$ case is barely any different, as also mentioned in \cref{remark:different_expression_D1}.

Recall that the configuration $C_n$ consists of $S_0$, a $(-n-2)$-sphere, and then $(-2)$-spheres $S_i$ for $1\leq i \leq n-2$. 
We start by determining the sub-chain formed by the $(-2)$-spheres.

\begin{lemma}[Determining the $(-2)$-spheres]\label{lemma:Ci_config_2spheres}
Let $S_i$ for $i=1,\dots, n-2$ form the maximal sub-chain of a $C_n$-configuration that consists of $(-2)$-spheres. Moreover, assume that the $C_n$-configuration is homologically orthogonal to
\[D_1=nH-(n-1)E_1-E_2-\cdots -E_{n-1}\text{ or } D_2=3H-2E_1-E_n.\]
Up to diffeomorphism fixing the corresponding $D_i$, we may assume that the homology classes are
\[S_1=E_{n-2}-E_{n-1},\ldots,S_{n-3}=E_2-E_3\text{ and } S_{n-2}=H-E_1-E_2-E_n.\]
\end{lemma}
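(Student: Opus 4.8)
The plan is to work entirely inside $H_2(X_n;\mathbb{Z})$ with the standard basis $\{H,E_1,\dots,E_n\}$ and use the intersection form together with the adjunction formula to pin down the classes $S_1,\dots,S_{n-2}$ one at a time, starting from the "far" end of the chain. First I would record the constraints a $(-2)$-sphere $S$ in the sub-chain must satisfy: $S^2=-2$ and, by the adjunction formula applied to a smoothly embedded sphere, $c_1(X_n)\cdot S=0$ (where $c_1(X_n)$ is Poincaré dual to $3H-E_1-\dots-E_n$); in addition the chain condition $S_i\cdot S_{i+1}=1$, $S_i\cdot S_j=0$ for $|i-j|\geq 2$, and $S_i\cdot S_0=0$ except at the one end where $S_1\cdot S_0=1$. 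Crucially each $S_i$ must also be orthogonal to the fixed class $D_2=3H-2E_1-E_n$ (or $D_1$ in the other case). Since we are only asked for the classes up to a diffeomorphism fixing $D_2$, I have the freedom to apply $D_2$-fixing diffeomorphisms — in practice permutations of the exceptional classes $E_2,\dots,E_{n-1}$ and reflections/Cremona moves fixing $D_2$ — to normalize the answer.

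The key steps, in order: (1) Show that the classes $E_{n-2}-E_{n-1}, E_{n-3}-E_{n-2},\dots,E_2-E_3$ are the "generic" candidates: each has square $-2$, pairs to $0$ with $c_1$, with $D_2=3H-2E_1-E_n$, with $D_1$, and with $S_0$, and consecutive ones meet once — so these classes do form a valid length-$(n-3)$ $(-2)$-chain disjoint from everything required. (2) Argue that this chain is forced up to the allowed diffeomorphisms: any $(-2)$-sphere class $S=aH+\sum b_iE_i$ orthogonal to $c_1$, to $D_2$, and appearing in such a chain has $a=0$ for the interior links (this is where the adjunction/orthogonality linear system is solved — $3a-\sum b_i=0$ together with $a^2-\sum b_i^2=-2$ forces $|a|$ small, and orthogonality to $D_2$ kills the $H$-component for the interior ones), so $S$ is a difference $E_r-E_s$, and the chain structure plus a permutation of the $E_i$ (fixing $E_1$ and $E_n$, hence fixing $D_2$) puts it in the stated normal form. (3) Identify the last link $S_{n-2}$: it must meet $S_{n-3}=E_2-E_3$ once, be orthogonal to $S_{n-4},\dots,S_1$ and to $S_0$, be orthogonal to $D_2$ and to $c_1$, and have square $-2$; solving this system (now the $H$-component is forced to be nonzero because $S_{n-2}$ is the class that "connects" to $H$) yields $S_{n-2}=H-E_1-E_2-E_n$, again after a possible $D_2$-fixing change of coordinates to fix signs and the choice among $E_1,E_n$. (4) Note that the $D_1$-case is handled identically, substituting $D_1=nH-(n-1)E_1-E_2-\dots-E_{n-1}$; by Remark~\ref{remark:different_expression_D1} one may instead work with $D_1\sim (k+1)H-kE_1-E_n$ and the computation becomes literally the same linear algebra.

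The main obstacle I expect is step (2): showing the chain is \emph{rigid} rather than merely exhibiting one valid chain. The linear conditions ($S^2=-2$, $c_1\cdot S=0$, $D_2\cdot S=0$, and the incidence pattern within the chain) do not instantly collapse the solution set — one has to rule out, for instance, solutions with $a=\pm1$ or $\pm2$ for interior links, and then show the residual ambiguity is exactly a permutation of $\{E_2,\dots,E_{n-1}\}$ together with sign choices, all of which are realized by diffeomorphisms fixing the relevant $D_i$. I would handle this by a clean bounding argument: from $3a=\sum b_i$ and $a^2+2=\sum b_i^2 \geq (\sum b_i)^2/(\#\{i:b_i\neq 0\})$ one bounds $a$ in terms of how many $E_i$ appear, and combined with orthogonality to $D_2$ and to the already-determined links this is enough to force $a=0$ for the interior links and $a=1$ for $S_{n-2}$; the bookkeeping is routine but is the place where care is needed. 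Everything else is a direct intersection-number computation of the kind already used repeatedly in Section~\ref{sec:obstruction} and in the proof of Lemma~\ref{Lemma_Compactification_S2S2_X1}.
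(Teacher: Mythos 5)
Your overall strategy matches the paper's: use $S_i^2=-2$, adjunction (so $c_1\cdot S_i=0$), and orthogonality to $D_2$ to classify the admissible classes, then normalize by $D_2$-preserving diffeomorphisms; your step (1) and the final normal form are correct. However, the rigidity claim at the heart of your step (2) --- that the interior links are \emph{forced} to have vanishing $H$-component, with the forcing attributed to orthogonality to $D_2$ together with a Cauchy--Schwarz bound --- is not correct, and this is precisely the hard part of the lemma. The ternary classes $\pm(H-E_1-E_j-E_n)$ for $2\le j\le n-1$ satisfy every one of your per-class constraints: they have square $-2$, pair to zero with $c_1$ and with $D_2=3H-2E_1-E_n$, and they can sit adjacent to binary classes $E_r-E_s$ (and to each other, with opposite signs) with the required intersection numbers. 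In particular a negative ternary class can legitimately occur as an \emph{interior} link adjacent to a positive one, so no per-class numerical bound will give you $a=0$ in the interior; the conclusion is only true after normalization, not before.

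What is missing are two global ingredients. First, an argument in the opposite direction: a chain of $n-2$ classes consisting solely of binary classes $\pm(E_j-E_r)$ with $2\le j,r\le n-1$ would require $n-1$ distinct indices from a pool of size $n-2$, so such a chain has length at most $n-3$; hence at least one ternary class \emph{must} occur, and the intersection pattern forces at most two, of opposite sign and adjacent. Second, the normalization cannot be achieved by permutations of $E_2,\dots,E_{n-1}$ alone (these never convert a ternary class into a binary one, nor move it along the chain); one needs Dehn twists (reflections) in spherical classes such as $\pm(H-E_1-E_{n-1}-E_n)$ and in $S_{n-2}$ itself --- these fix $D_2$, preserve binarity of the remaining links, and a short case analysis with them pushes the ternary class(es) to the single position $S_{n-2}$ with positive sign. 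Without these two steps your argument shows only that the stated chain is one consistent possibility, not that every chain is equivalent to it.
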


\begin{proof}
We will assume that the configuration $C_n$ is homologically orthogonal to $D_2$. 
The argument for the case in which $C_n$ is homologically orthogonal to $D_2$ follows analogously.
Fix $i$ and assume that $S_i=aH-\sum_{j=1}^{n} b_j E_j\in H_2(X_n;\mathbb{Z})$. 
The orthogonality $S_i\cdot D_2=0$ implies that $3a-2b_1-b_n=0$, which means that
\begin{equation}\label{eq:basiceq}
a=\frac{2b_1+b_n}{3}.
\end{equation}
Since $S_i^2=-2$ we have $a^2-\sum_{1=j}^n b_j^2=-2$. Combining this with Equation \ref{eq:basiceq} we get, after reordering and distributing the terms,
\begin{equation}\label{eq:formforci}
    4b_1^2+4b_n^2+(b_1-2b_n)^2+9\sum_{j=2}^{n-1}b_j^2=18.
\end{equation}
From \cref{eq:formforci} it is easy to see that  $-1\leq b_j\leq 1$ for $1\leq j\leq n$ and also that at most three of the $b_j$ can be non-vanishing at the same time. 
In fact, the only classes with three non-vanishing coefficients, called \textit{ternary classes} in \cite[Lemma 2.3.]{Ev10}, are
\[S_i=\pm(H-E_1-E_j-E_n)\text{ for some } 2\leq j\leq n-1.\]
We will call the ternary classes \textit{positive} or \textit{negative}, depending on the sign in this equation.
The only other possible integer solutions to \cref{eq:formforci}, together with the adjunction formula, lead to \textit{binary classes}, which are given by
\[S_i=E_j-E_{r}\text{ for } 2\leq j,r\leq n-1.\]

Any chain\footnote{By a chain we mean here a finite sequence of homology classes such that the intersection number of a class with its direct neighbors is equal to $+1$ and $0$ with all the other classes.} formed by binary classes must have length less or equal to $n-3$. 
This is a consequence of the intersection pattern prescribed for the chain.
Therefore $\mathcal{C}_n$, the homological configuration associated to the $C_n$-configuration, must carry at least one ternary class.
However, $\mathcal{C}_n$ can carry at most two ternary classes, again for intersection reasons.
Moreover, one of the ternary classes must be positive and the other negative and they have to be adjacent. 
Let us now show that any chain $\mathcal{S}=(S_1,\dots, S_{n-2})$, consisting of binary and ternary classes as above, can be mapped via a diffeomorphism to a chain for which the only ternary class is $S_{n-2}=H-E_1-E_2-E_n$.
We distinguish two cases:
\begin{itemize}
    \item[1)]
    We start by considering a chain $S_1,\dots, S_{n-2}$ that has only one ternary class that is either positive or negative. 
    Two different binary classes intersecting a ternary class $\pm(H-E_1-E_j-E_n)$ non-trivially must share the class $\mp E_j$, meaning that they have a nontrivial intersection. 
    This means that the ternary class can only be either $S_1$ or $S_{n-2}$. 
    We distinguish these two subcases:
        \begin{itemize}
            \item[a)] 
                Suppose that the unique ternary class is $S_1$. 
                Then, after relabeling the $E_i$, we can assume that $S_1=\pm(H-E_1-E_n-E_2)$ and that $S_{i}=\pm (E_{i}-E_{i+1})$ for $2\leq i\leq n-2$. 
                Therefore, Dehn-twisting around $B=\pm (H-E_1-E_n-E_{n-1})$ makes $S_{n-2}$ positive ternary, which is what we wanted. 
                Note that the class $D_2$ is fixed under this Dehn-twist and the remaining binary classes of the chain stay binary under this Dehn-twist. 
                After a change of indices this yields the desired homological configuration.
            \item[b)]
                Suppose that the unique ternary class is $S_{n-2}$. 
                If it is positive, we are done. 
                If it is negative, by the same reasoning as above, we can Dehn-twist to find a chain that has $S_1$ positive and ternary and thus we fall back to the Case a).
        \end{itemize}
    \item[2)] 
    Now we consider the case in which the chain $S_1,\dots,S_{n-2}$ carries two ternary classes. 
    As mentioned before one of them has to be positive and the other negative.
        \begin{itemize}
            \item[a)] Suppose that the ternary classes are $S_{n-3}$ and $S_{n-2}$.
            Dehn-twisting around $S_{n-2}$ keeps $S_{n-2}$ ternary and makes $S_{n-3}$ binary. Therefore we fall back to Case 1).
            \item[b)] Similar to Case 1a), if $S_{n-2}$ is not ternary, then after relabeling the $E_i$ we have that $S_{n-2}=E_{n-1}-E_{n-2}$ and thus Dehn-twisting along $H-E_1-E_n-E_{n-1}$ makes the class $S_{n-2}$ ternary and so we fall back to one of the other cases.
        \end{itemize}
\end{itemize}

We have thus shown that, after a diffeomorphism preserving the class $D_2$, any chain $S_1,\dots,S_{n-2}$
is diffeomorphic to one with a single positive ternary class and that this ternary class is $S_{n-2}$.
\end{proof}

We now move to the determination of the remaining sphere of the configuration $C_n$, namely $S_0$.

\begin{lemma}[Determining the $(-n-2)$-sphere]\label{lemma:C0_config}
Suppose that there is a $C_n$-configuration in $(X_n,\omega)$ such that $\mathcal{C}_n$ is homologically orthogonal to
\[D_1=nH-(n-1)E_1-E_2-\cdots -E_{n-1} \text{ or } D_2=3H-2E_1-E_n\]
and such that the $(-2)$-spheres in the configuration $C_n$ represent the classes
\[S_1=E_{n-2}-E_{n-1},\ldots,S_{n-3}=E_2-E_3 \text{ and } S_{n-2}=H-E_1-E_2-E_n.\]
Then, the $(-n-2)$-sphere $S_0$ represents the class
\[S_0=-2H+3E_1-E_2-\cdots-E_{n-2}.\]
\end{lemma}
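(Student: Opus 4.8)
The plan is to pin down $S_0$ by expanding it in the standard basis of $H_2(X_n;\mathbb{Z})$ and converting every hypothesis into an equation on the coefficients. Write $S_0 = aH - \sum_{j=1}^{n} b_j E_j$. Since $S_0$ belongs to the symplectic configuration $C_n$ and $S_0^2 = -(n+2)$, the adjunction formula gives $c_1(X_n)\cdot S_0 = S_0^2 + 2 = -n$, i.e.\ $3a - \sum_{j=1}^n b_j = -n$ (using $c_1(X_n) = 3H - E_1 - \cdots - E_n$). The chain structure of $C_n$ forces $S_0\cdot S_1 = 1$ (a $+1$ by positivity of symplectic intersections) and $S_0\cdot S_i = 0$ for $2\le i\le n-2$; pairing $S_0$ with the binary classes $S_i = E_{n-1-i}-E_{n-i}$ ($1\le i\le n-3$) from \cref{lemma:Ci_config_2spheres} yields $b_2 = b_3 = \cdots = b_{n-2} =: c$ and $b_{n-1} = c-1$, while pairing with the ternary class $S_{n-2} = H - E_1 - E_2 - E_n$ yields $a = b_1 + c + b_n$. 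Finally $S_0$ is disjoint from a sphere representing $D_1$ (resp.\ $D_2$), so $S_0\cdot D_1 = 0$ (resp.\ $S_0\cdot D_2 = 0$) is one further linear relation. I would write out the $D_2$-case in detail, the $D_1$-case being entirely analogous and producing the same answer: there $S_0\cdot D_2 = 0$ reads $3a = 2b_1 + b_n$, which together with $a = b_1 + c + b_n$ gives $b_1 = -3c - 2b_n$ and $a = -2c - b_n$, and feeding these into the adjunction relation collapses it to $b_n = \tfrac{(n+1)(1-c)}{2}$. After the linear part, every coefficient of $S_0$ is thus a function of the single integer $c$.

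The only equation not yet used is the quadratic $S_0^2 = -(n+2)$. Substituting all of the above and simplifying — it is cleanest to put $u = 1-c$ — the quadratic reduces, after cancelling the common factor $n$, to $u\bigl[(n-1)u + 2\bigr] = 0$. Hence $u = 0$ or $u = -\tfrac{2}{n-1}$, and the second value fails to be an integer whenever $n\ge 4$; so $c = 1$. Propagating $c=1$ back through the substitutions gives $b_n = 0$, $b_1 = -3$, $a = -2$, $b_{n-1} = 0$ and $b_2 = \cdots = b_{n-2} = 1$, i.e.\ $S_0 = -2H + 3E_1 - E_2 - \cdots - E_{n-2}$, which is the claimed class; a direct check then confirms that this class satisfies $S_0^2 = -(n+2)$, the prescribed intersections with all the $S_i$, the adjunction equality, and orthogonality to both $D_1$ and $D_2$. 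For the handful of small $n$ not covered by the integrality shortcut — in practice only $n = 3$, where there are no binary classes and the system is tiny — one solves by hand; the extra integral solution that appears there is Cremona/Dehn-twist equivalent, by a diffeomorphism preserving $D_1$ (resp.\ $D_2$) and the sub-chain of $(-2)$-spheres, to the claimed one, so the statement survives unchanged.

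The work here is almost entirely organizational, and the main pitfall I expect is precisely that bookkeeping: keeping straight which $E_j$ enters which $S_i$, the orientation sign $S_0\cdot S_1 = +1$, the sign convention in $S_0 = aH - \sum b_j E_j$, and checking that the listed relations really do carve out a one-parameter solution set before the quadratic is imposed. Two smaller points deserve care: the use of adjunction is legitimate precisely because $S_0$ is one of the symplectic spheres of the configuration by hypothesis; and disposing of the spurious integer roots of the quadratic in low dimensions relies on the same Cremona-transformation toolkit already deployed in the proof of \cref{lemma:Ci_config_2spheres}.
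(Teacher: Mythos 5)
Your derivation is, for $n\ge 4$, the same argument as the paper's: expand $S_0=aH-\sum b_jE_j$, feed in the chain intersections, the orthogonality to $D_2$ (or $D_1$), adjunction, and finally the self-intersection. Your linear eliminations ($b_1=-3c-2b_n$, $a=-2c-b_n$, $b_n=\tfrac{(n+1)(1-c)}{2}$) coincide with the paper's, and your substitution $u=1-c$ reducing the quadratic to $u\bigl[(n-1)u+2\bigr]=0$ is a cleaner packaging than the paper's case-by-case check of $b^2\ge 2$, $b=0,\pm1$; I have verified that identity, so for $n\ge 4$ integrality of $u$ finishes the proof exactly as you say. (Incidentally, the paper's displayed equation for the quadratic carries a spurious factor of $4$ on the $(n+1-(n+1)b)^2$ term — it should be $(2b_n)^2$, not $4(2b_n)^2$ — which is why its own case analysis appears to exclude everything but $b=1$ even when $n=3$.)

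The genuine gap is your disposal of $n=3$. There the extra root $u=-1$ (i.e.\ $c=2$) produces $S_0'=-2H+2E_1-E_2+2E_3$, which really does satisfy \emph{every} stated hypothesis: $(S_0')^2=-5$, $c_1(S_0')=-3$, $S_0'\cdot S_1=1$ with $S_1=H-E_1-E_2-E_3$, and $S_0'\cdot D_2=0$. Your claim that it is Cremona/Dehn-twist equivalent to $-2H+3E_1$ by a diffeomorphism preserving $D_2$ and $S_1$ fails: any $c_1$-preserving isometry of $H_2(X_3;\mathbb{Z})$ fixing both $D_2$ and $S_1$ must act by $\pm1$ on the rank-one orthogonal complement $\langle E_1+E_2-2E_3\rangle$, and the $-1$ case is the reflection in a class of square $-6$, which is not an integral isometry (e.g.\ $E_1\cdot(E_1+E_2-2E_3)=-1$ is not divisible by $3$); the Weyl group computation gives the same answer. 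So the two classes are genuinely inequivalent as lattice data, and the lemma, read as a purely homological statement, has a second solution when $n=3$. One has to kill $S_0'$ by a different mechanism — for instance, solving the period system for configuration $\{S_0',S_1,D_1',D_2\}$ forces $\mu_2=-\tfrac{c_0+2c_1}{3}<0$, contradicting the positivity of areas of exceptional classes — or simply quote the treatment of $L_{3,1}$-pinwheels in the earlier reference. To be fair, the paper's own proof does not confront this case either (the algebra slip hides it), but your proposed fix as written does not close it.
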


\begin{proof}
As in the proof of \cref{lemma:Ci_config_2spheres} we assume that $\mathcal{C}_n$ is homologically orthogonal to $D_2$. The case in which $\mathcal{C}_n$ is homologically orthogonal to $D_1$ follows along the same lines.
Suppose that $S_0=aH-\sum_{j=1}^n b_j E_j$. 
The conditions $S_0 \cdot S_1=1$, $S_0\cdot S_i=0$ for $2\leq i\leq n-3$ and $S_0 \cdot S_{n-2}=0$ imply
\begin{equation}\label{eq:lem_5_11_def_b}
    1=b-b_{n-1},\quad b\vcentcolon =b_2=\cdots=b_{n-2} \quad\text{and}\quad 0=a-b_1-b-b_n
\end{equation}
and the equation obtained by using $S_0\cdot D_2=0$ is
\begin{equation}\label{eq:lem_5_11_a}
    a=\frac{2b_1+b_n}{3}.
\end{equation}
In addition, combining the third equation of \cref{eq:lem_5_11_def_b} and \cref{eq:lem_5_11_a} means that $\frac{2b_1+b_n}{3}=b_1+b+b_n$, which means 
\begin{equation}\label{eq:lem_5_11_b_1}
    b_1=-3b-2b_n.
\end{equation}
The adjunction formula for the symplectic sphere $S_0$ reads $-n=c_1(S_0)=3a-\sum_{j=1}^n b_j$ which, after using \cref{eq:lem_5_11_def_b,eq:lem_5_11_a,eq:lem_5_11_b_1}, implies
\begin{equation}\label{eq:lem_5_11_b_1_updated}
    b_1=(n-2)b-(n+1).
\end{equation}
Using this equation for $b_1$ in \cref{eq:lem_5_11_b_1} shows
\begin{equation}\label{eq:lem_5_11_2b_n}
    2b_n=-3b-b_1=n+1-(n+1)b.
\end{equation} 
Now we want to use the equation on the coefficients that is coming from the prescribed self-intersection number, i.e.\ $-n-2=S_0^2=a^2-\sum_{j=1}^n b_j^2$.
Plugging \cref{eq:lem_5_11_def_b} and \cref{eq:lem_5_11_a} into this and regrouping the terms yields 
\begin{equation*}
4b_1^2+4b_n^2+(b_1-2b_n)^2+9(n-3)b^2 +9b_{n-1}^2=9(n+2),
\end{equation*}
which, after plugging in \cref{eq:lem_5_11_b_1_updated,eq:lem_5_11_def_b,eq:lem_5_11_2b_n}, reads
\begin{equation}\label{eq:basiceq_C0}
    \begin{split}
        4\Big((n-2)b-(n+1)\Big)^2&+4\Big(n+1-(n+1)b\Big)^2+\Big((2n-1)b-2(n+1)\Big)^2\\
        &+9(n-3)b^2+9(b-1)^2=9(n+2).
    \end{split}
\end{equation}
It is easy to see that for $b^2\geq 2$ the left-hand side of \cref{eq:basiceq_C0} is bigger than the right-hand side.
Furthermore, for $b=0$ \cref{eq:basiceq_C0} reduces to $9(n+1)=12(n+1)^2$, which has no integer solutions, and for $b=-1$ it reduces to
\[4(2n-1)^2+4(n+1)^2+(4n+1)^2+9(n-3)^2+36=9(n+2),\]
which also does not have integer solutions, as for $n\geq 3$ already the term $(4n+1)^2$ is greater than the right-hand side.
Finally, for $b=1$ \cref{eq:basiceq_C0} is indeed satisfied and using \cref{eq:lem_5_11_2b_n,eq:lem_5_11_b_1_updated,eq:lem_5_11_a} shows
\[S_0=-2H+3E_1-E_2-\cdots-E_{n-2}.\]
\end{proof}

With this preparational work in place we are now able to prove:

\begin{theorem}\label{thrm:all_homology_classes}
Let $L_n$ be a liminal pinwheel and let $\mathcal{W}_n$ be the set of homology classes in the orthogonal complement. 
The manifold obtained by blowing up the pinwheel is the rational symplectic manifold $(X_n,\omega)$ and therein the homology classes generating $\mathcal{D}_n$ are, up to diffeomorphism, given by
\[D_1=nH-(n-1)E_1-E_2-\cdots -E_{n-1} \text{ and } D_2=3H-2E_1-E_n\]
and the homological configuration $\mathcal{C}_n$ of the chain $C_n$ is given by
\[S_0=-2H+3E_1-E_2-\cdots-E_{n-2},\quad S_1=E_{n-2}-E_{n-1},\quad\ldots\quad, S_{n-3}=E_2-E_3\]
\[\quad S_{n-2}=H-E_1-E_2-E_n.\]
\end{theorem}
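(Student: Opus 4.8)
The plan is to assemble \cref{thrm:all_homology_classes} from the preparatory results already proved in this subsection, so that only a short piece of linear algebra remains. First I would record the ambient assertion: since $(S^2\times S^2,\omega_{a,b})$ has $c_1\cdot[\omega]=2(a+b)>0$ and $(X_1,\omega_{h,\mu})$ has $c_1\cdot[\omega_{h,\mu}]=3h-\mu>0$, \cref{cor: rationality preserv} shows that blowing up $L_n$ yields a rational symplectic manifold, and \cref{Lemma_Rational_Blowup_in_S2S2_X1} identifies it with $X_n$ carrying a form $\omega_{h,\boldsymbol\mu}$, which we express in the standard basis $\{H,E_1,\dots,E_n\}$.

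Next I would feed in the liminality hypothesis. By \cref{def:liminal_pinwheel}, $L_n$ is disjoint from a smooth sphere in one of the two generators of $\mathcal W_n$ from \cref{lem:hom_complement}; hence in $X_n$ the corresponding class among $D_1,D_2$ is carried by an embedded sphere. \cref{lemma:conditions_Di} makes this class ordinary and both $D_1,D_2$ primitive, so \cref{prop: determining d_i} (through \cref{theorem:LiLi}) provides a diffeomorphism of $X_n$ normalising that class to $nH-(n-1)E_1-E_2-\cdots-E_{n-1}$ or to $3H-2E_1-E_n$. Following the convention used for the lemmas, I would treat the case in which it is $D_2=3H-2E_1-E_n$, the $D_1$-case being the same after the coordinate change recorded in \cref{remark:different_expression_D1}. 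Applying \cref{lemma:Ci_config_2spheres} with this $D_2$ then fixes, up to a further diffeomorphism preserving $D_2$, the $(-2)$-spheres of the $C_n$-configuration as $S_i=E_{n-1-i}-E_{n-i}$ for $1\le i\le n-3$ and $S_{n-2}=H-E_1-E_2-E_n$, and \cref{lemma:C0_config} then fixes $S_0=-2H+3E_1-E_2-\cdots-E_{n-2}$. This already establishes the claimed homological configuration $\mathcal C_n$.

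It remains to pin down the second class $D_1$. Since the rational blow-up is supported in a neighbourhood of $L_n$ and $W_1$ admits a representative disjoint from $L_n$, the argument in the proof of \cref{prop: discrepancies} shows that intersection numbers and pairings with $c_1$ are unchanged for these classes, so $D_1\cdot S_j=0$ for every $j$, $D_1\cdot D_2=W_1\cdot W_2=n+2$ and $c_1(X_n)\cdot D_1=c_1\cdot W_1=n+3$. Writing $D_1=xH-\sum_i y_iE_i$, orthogonality to the binary spheres forces $y_2=\cdots=y_{n-1}$, and the four remaining linear constraints then have the unique solution $D_1=nH-(n-1)E_1-E_2-\cdots-E_{n-1}$; the self-intersection $D_1^2=n+1=W_1^2$ is then automatic, which I would note as a consistency check. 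Assembling the three normalisations gives $\mathcal D_n=\langle D_1,D_2\rangle$ and $\mathcal C_n$ exactly as stated.

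I do not expect a single hard step; the genuine work has been front-loaded into \cref{prop: determining d_i}, \cref{lemma:Ci_config_2spheres} and \cref{lemma:C0_config}. The main thing to be careful about is bookkeeping: checking that the diffeomorphisms supplied by these three results are compatible, in the sense that each later one preserves the data normalised earlier, and that the two branches of the case distinction — whether $D_1$ or $D_2$ carries the embedded sphere — land on the same unordered pair $\{D_1,D_2\}$, which is where \cref{remark:different_expression_D1} enters. The final linear-algebra determination of the remaining $D_i$ is routine once $\mathcal C_n$ is in hand.
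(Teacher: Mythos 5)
Your proposal is correct and follows essentially the same route as the paper: normalise one of $D_1,D_2$ via the liminality hypothesis together with \cref{lemma:conditions_Di} and \cref{prop: determining d_i}, then invoke \cref{lemma:Ci_config_2spheres} and \cref{lemma:C0_config} to fix $\mathcal{C}_n$, and finally determine the remaining generator of $\mathcal{D}_n$ by a direct computation. The only (harmless) variation is that you pin down $D_1$ using the purely linear constraints $D_1\cdot S_j=0$, $D_1\cdot D_2=n+2$ and $c_1\cdot D_1=n+3$ — all legitimately transported from $\mathcal{W}_n$ since the surgery is supported away from the representatives — whereas the paper alludes to a computation in the style of \cref{lemma:C0_config} using adjunction and self-intersection; your version is marginally cleaner and checks out.
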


\begin{proof}
By \cref{lemma:conditions_Di}, the generators of $\mathcal{W}_n$, which after the blow-up become $\mathcal{D}_n$, satisfy all the conditions of Theorem \ref{theorem:LiLi}. 
Thus, by the liminality condition, we can determine $D_1$ or $D_2$. As before we assume that $D_2$ is represented by an embedded sphere and fix $D_2=3H-2E_1-E_n$. The case in which $D_1$ is represented by an embedded sphere follows analogously.
Making such a choice, \cref{lemma:Ci_config_2spheres} and \cref{lemma:C0_config} then determine the homological configuration $\mathcal{C}_n$ completely.

The last thing to determine is $D_1$. 
This can be done with a computation similar to the one carried out in \cref{lemma:C0_config}.
\end{proof}

With the above theorem we can, in principle, completely describe the cohomology class of $\omega$ with respect to the areas of the homology classes $D_i$ and $S_j$. 
It turns out that, for our purposes we will only need to determine the periods of the last two exceptional spheres $E_i$.

\begin{corollary}\label{cor: mu_i periods}
Let $L_n$ be a liminal pinwheel and suppose that $d_1$ and $d_2$ are the symplectic areas of the $W_1$ and $W_2$ homology class. 
Consider the symplectic manifold $(X_n,\omega)$ obtained by blowing up $L_n$, where the spheres $S_i$ for $i=0,\dots,n-2$ have positive symplectic area~$c_i$. 
Then the two exceptional classes $E_{n-1}$ and $E_{n}$ have areas
\begin{align}\label{eq:mu_n area}
    \begin{split}
        \mu_{n}&=\frac{1}{n^2}\Big((n+2)d_1-(n+1)d_2-\sum_{i=0}^{n-2}\big(1+i(n+1)\big)c_i\Big)\\
        \mu_{n-1}&=\frac{1}{n^2}\Big((n+2)d_2-4d_1-\Big((n-2)c_0+(n+2)\sum_{i=1}^{n-2}(n-(i+1))c_i\Big)\Big).
    \end{split}
\end{align}
\end{corollary}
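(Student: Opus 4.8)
The statement follows from \cref{thrm:all_homology_classes} by solving a linear system, and the plan is to set that system up explicitly. Write $\omega=\omega_{h,\bm\mu}$ in the cohomology basis dual to $H,E_1,\dots,E_n$, so the quantities to be determined are $h,\mu_1,\dots,\mu_n$. The rational blow-down identifies a neighbourhood of the chain $C_n$ in $X_n$ with a neighbourhood of $L_n$ and carries $W_1,W_2$ back to surfaces representing $D_1,D_2$ that are disjoint from $C_n$; hence $\omega(D_1)=d_1$, $\omega(D_2)=d_2$, while $\omega(S_i)=c_i$ by hypothesis. Pairing $\omega$ with the $n+1$ classes $D_1,D_2,S_0,S_1,\dots,S_{n-2}$ of \cref{thrm:all_homology_classes} yields $n+1$ linear equations in the $n+1$ unknowns $h,\mu_1,\dots,\mu_n$; these classes form a $\mathbb{Q}$-basis of $H_2(X_n;\mathbb{Q})$ (the $S_1,\dots,S_{n-3}$ span the consecutive differences $E_r-E_{r+1}$ for $2\le r\le n-2$, and modulo these the remaining four classes $D_1,D_2,S_0,S_{n-2}$ are readily seen to be independent; in fact the change-of-basis determinant works out to $\pm n^2$, which is exactly what produces the prefactor $\tfrac{1}{n^2}$), so the system has a unique solution, namely the claimed one.

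Concretely, I would first list the equations. The $(-2)$-spheres $S_i=E_{n-1-i}-E_{n-i}$, $1\le i\le n-3$, give the telescoping relations $\mu_{n-1-i}-\mu_{n-i}=c_i$; the class $S_{n-2}=H-E_1-E_2-E_n$ gives $h-\mu_1-\mu_2-\mu_n=c_{n-2}$; the class $S_0=-2H+3E_1-E_2-\cdots-E_{n-2}$ gives $-2h+3\mu_1-\sum_{r=2}^{n-2}\mu_r=c_0$; and $D_1,D_2$ give $nh-(n-1)\mu_1-\sum_{r=2}^{n-1}\mu_r=d_1$ and $3h-2\mu_1-\mu_n=d_2$. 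Telescoping the first batch gives $\mu_r=\mu_{n-1}+\sum_{i=1}^{n-1-r}c_i$ for $2\le r\le n-2$, and substituting this into the $S_0$- and $D_1$-equations, while reorganising the double sum $\sum_r\sum_i c_i$ into the single sum $\sum_{i=1}^{n-3}(n-2-i)c_i$, collapses the problem to a $4\times4$ system in $h,\mu_1,\mu_{n-1},\mu_n$ whose right-hand sides are explicit integral combinations of $d_1,d_2,c_0,\dots,c_{n-2}$.

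Finally, I would solve this $4\times4$ system: for instance, eliminate $\mu_n$ and then $h$ using the $S_{n-2}$- and $D_2$-equations, solve the remaining two equations for $\mu_1$ and $\mu_{n-1}$, and recover $\mu_n=3h-2\mu_1-d_2$. Collecting the coefficient of each $c_i$ in the two answers, after again rearranging the nested sums into a single index, produces $1+i(n+1)$ as the coefficient of $c_i$ in $\mu_n$ and $(n+2)(n-i-1)$ as the coefficient of $c_i$ in $\mu_{n-1}$ for $i\ge1$ (with the exceptional value $n-2$ for $c_0$), together with the overall factor $\tfrac{1}{n^2}$. The argument is entirely routine given \cref{thrm:all_homology_classes}; the only step demanding care is the bookkeeping of the sums $\sum_{r=2}^{n-2}\sum_{i=1}^{n-1-r}c_i$ when substituting the telescoped $\mu_r$, together with cross-checking the low-order ($h$-, $\mu_1$-, $\mu_n$-) coefficients, for which comparing the total volumes of $(X_1,\omega_{h,\mu})$ (respectively $(S^2\times S^2,\omega_{a,b})$) and $(X_n,\omega)$ gives a convenient independent consistency check.
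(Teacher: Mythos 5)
Your proposal is correct and follows essentially the same route as the paper: both invert the linear system obtained by pairing $\omega$ with the basis $D_1,D_2,S_0,\dots,S_{n-2}$ from \cref{thrm:all_homology_classes}, the paper via row operations on the full $(n+1)\times(n+1)$ table in \cref{Appendix:solving_mu} and you via telescoping the $(-2)$-sphere relations down to a $4\times4$ system in $h,\mu_1,\mu_{n-1},\mu_n$ — a difference of bookkeeping only. Your intermediate claims (the reorganised sum $\sum_{i=1}^{n-3}(n-2-i)c_i$, the determinant $\pm n^2$, and the final coefficients) all check out.
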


\begin{proof}
Since the $D_i$ and $S_j$ form a basis of $H_2(X_n;\mathbb{R})$ there is a linear transformation mapping these classes to the standard basis. 
A tedious but straightforward calculation gives the desired result. 
For more details see \cref{Appendix:solving_mu}.
\end{proof}

\subsection{Necessity of the inequalities}

Now we use \cref{thrm:all_homology_classes} to show that the existence of liminal pinwheels implies the inequalities on the periods of the symplectic forms claimed in \cref{thrm:A} and \cref{thrm:B}. 

\begin{proposition}[Main calculation]
\label{prop:main_calculation}
Suppose that $(X_n,\omega_{h,\boldsymbol{\mu}})$ carries an $\mathcal{F}_n = \{\mathcal{C}_n,\mathcal{D}_n\}$ configuration, where $\mathcal{C}_n$ is represented by an $C_n$-configuration of symplectic spheres and one of the $D_i$ of $\mathcal{D}_n$ satisfies the conditions of \cref{thrm:all_homology_classes}. 
Moreover, assume that $(X_n,\omega_{h,\boldsymbol{\mu}})$ and the configuration actually stem from blowing up a liminal pinwheel.
Blowing down the $C_n$-configuration leads to
\begin{enumerate}
    \item $(S^2\times S^2,\omega_{a,b})$ with $\frac{a}{k+1}<b<2a$ when $n=2k+1$ and
    \item $(X_1,\omega_{h,\mu})$ with $\mu<\frac{k}{k+1}h$ when $n=2k$.
\end{enumerate}
\end{proposition}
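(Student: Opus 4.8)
The plan is to read off the periods of the blown-up symplectic form from \cref{cor: mu_i periods} and then to exploit that every area occurring there is positive. By \cref{Lemma_Rational_Blowup_in_S2S2_X1} the blow-up of the liminal pinwheel $L_n$ is $(X_n,\omega_{h,\boldsymbol{\mu}})$ with $\boldsymbol{\mu}\in\mathbb{R}_{>0}^{n}$; equivalently, $E_{n-1}$ and $E_n$ are exceptional, hence represented by smooth and therefore by symplectic spheres via \cite[Theorem A]{LiLiu95:Genadj}, so $\mu_{n-1}>0$ and $\mu_n>0$. Likewise the $C_n$-configuration consists of symplectic spheres, so their areas $c_0,\dots,c_{n-2}$ are strictly positive, and the areas $d_1=\omega(W_1)$, $d_2=\omega(W_2)$ of the generators of $\mathcal{W}_n$ equal $\omega(D_1)$, $\omega(D_2)$ because $W_j$ and $D_j$ correspond under the rational blow-up/blow-down.

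First I would observe that for $n\ge 3$ every coefficient multiplying some $c_i$ in the formulas of \cref{cor: mu_i periods} is strictly positive: in the expression for $\mu_n$ the coefficient of $c_i$ is $1+i(n+1)\ge 1$, while in the expression for $\mu_{n-1}$ the coefficient of $c_0$ is $n-2\ge 1$ and the coefficient of $c_i$ for $1\le i\le n-2$ is $(n+2)\bigl(n-(i+1)\bigr)\ge n+2$. Since the $c_i$ enter with a minus sign, $\mu_n>0$ then forces $(n+2)d_1-(n+1)d_2>0$ and $\mu_{n-1}>0$ forces $(n+2)d_2-4d_1>0$.

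Then I would substitute the values of $d_1,d_2$ coming from \cref{lem:hom_complement} after blowing down. When $n=2k+1$ blowing down yields $(S^2\times S^2,\omega_{a,b})$ with $d_1=a+(k+1)b$ and $d_2=2a+b$; a short computation rewrites $(n+2)d_1-(n+1)d_2>0$ as $(2k+1)\bigl((k+1)b-a\bigr)>0$, i.e.\ $b>\tfrac{a}{k+1}$, and $(n+2)d_2-4d_1>0$ as $(2k+1)(2a-b)>0$, i.e.\ $b<2a$, which together give \cref{ineq:s2xs2}. When $n=2k$ blowing down yields $(X_1,\omega_{h,\mu})$ with $d_1=(k+1)h-k\mu$ and $d_2=2h$; here $(n+2)d_1-(n+1)d_2>0$ reads $2k\bigl(kh-(k+1)\mu\bigr)>0$, i.e.\ $\mu<\tfrac{k}{k+1}h$, which is \cref{ineq:X1}, while $(n+2)d_2-4d_1>0$ only recovers the automatic inequality $\mu>0$. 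Whether $D_1$ or $D_2$ is the class carrying the embedded sphere plays no role here, since \cref{thrm:all_homology_classes} and \cref{cor: mu_i periods} already fix the whole configuration and the single pair of formulas for $\mu_{n-1}$ and $\mu_n$.

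The hard part will be the sign bookkeeping of the middle step — verifying that none of the $c_i$-coefficients of \cref{cor: mu_i periods} can vanish or be negative for $n\ge 3$, so that discarding the $c_i$-terms genuinely preserves the direction of the inequalities — together with the elementary but mildly tedious algebra of the last step; everything else is a direct appeal to the structural results established above.
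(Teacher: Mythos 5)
Your proposal is correct and follows essentially the same route as the paper: read off $\mu_{n-1}$ and $\mu_n$ from \cref{cor: mu_i periods}, use positivity of the $\mu_i$, the $c_i$, and the coefficients multiplying the $c_i$ to deduce $(n+2)d_1-(n+1)d_2>0$ and $(n+2)d_2-4d_1>0$, and then substitute the periods of $W_1,W_2$ in the two parity cases. Your explicit check that the $c_i$-coefficients are strictly positive for $n\geq 3$ is a detail the paper only asserts (calling $\tilde{c}_{n-1},\tilde{c}_n$ positive combinations), and your algebra in both cases matches the paper's conclusions exactly.
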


\begin{proof}
Denote by $c_i$ and $d_i$ the $\omega_{h,\boldsymbol{\mu}}$-periods of the classes of $\mathcal{F}_n$. 
These homology classes constitute an $\mathbb{R}$-basis of $H_2(X_n;\mathbb{R})$ and thus we can express the periods of the standard basis, namely the $h,\mu_i$, in terms of the periods of $\mathcal{F}_n$.\\
\cref{cor: mu_i periods} gives 
\begin{equation*}
    \mu_{n-1}=\frac{(n+2)d_2-4d_1}{n^2}- \tilde{c}_{n-1} \quad\text{and}\quad
    \mu_{n}=\frac{(n+2)d_1-(n+1)d_2}{n^2}-\tilde{c}_{n}
\end{equation*}
where $\tilde{c}_{n-1}$ and $\tilde{c}_{n}$ are some \textbf{positive} linear combination of the areas $c_i$, which are themselves positive. 
The positivity of $\tilde{c}_{n}$ and $\tilde{c}_{n-1}$, together with the positivity of the $\mu_i$'s implies
\begin{equation}\label{eq:ineqfordi}
    (n+2)d_2-4d_1>0 \quad\text{and}\quad (n+2)d_1-(n+1)d_2>0.
\end{equation}
When $n=2k+1$, we have that $d_1=a+(k+1)b$ and $d_2=2a+b$ and so \cref{eq:ineqfordi} implies exactly 
\[2a>b \quad\text{and}\quad (k+1)b>a,\]
whereas when $n=2k$, we have $d_1=(k+1)h-k\mu$ and $d_2=2h$ and \cref{eq:ineqfordi} implies
\[0<4k\mu \quad\text{and}\quad (k+1)\mu<kh,\]
where the first inequality is redundant.
\end{proof}

\begin{remark}
    Appealing to \cite{PaShi23} one can actually get away with just assuming that the appropriate configuration exists in $(X_n,\omega_{h,\boldsymbol{\mu}})$. 
    However, since this is not needed in this paper we refrain from discussing this in detail.
\end{remark}

\cref{prop:main_calculation} then implies:

\begin{theorem}[\cref{thrm:A}]\label{thrm:s2xs2_obstruction}
If $L_{2k+1,1}$ is a liminal pinwheel in $(S^2\times S^2,\omega_{a,b})$ then
\[2a>b>\frac{a}{k+1}.\] 
\end{theorem}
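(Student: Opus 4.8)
The plan is to deduce the statement directly from \cref{prop:main_calculation}, so the real work is just checking that a liminal $L_{2k+1,1}$-pinwheel produces the kind of configuration that proposition requires. First I would invoke \cref{Lemma_Rational_Blowup_in_S2S2_X1}: performing the symplectic rational blow-up along $L_{2k+1,1}$ in $(S^2\times S^2,\omega_{a,b})$ yields a symplectic manifold $(X_{2k+1},\omega_{h,\bm{\mu}})$ together with a $C_{2k+1}$-configuration of symplectic spheres $S_0,\dots,S_{2k-1}$, each of strictly positive symplectic area $c_i$.

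Next I would bring in the liminality hypothesis. By \cref{lem:hom_complement} the homological orthogonal complement of $[L_{2k+1,1}]=A+kB$ is $\mathcal{W}_{2k+1}=\langle W_1,W_2\rangle$ with $W_1=A+(k+1)B$ and $W_2=2A+B$; liminality says exactly that one of these classes is represented by a smooth sphere disjoint from the pinwheel, and since the rational blow-up is localized in a neighborhood retracting onto $L_{2k+1,1}$, this sphere survives disjoint from the $C_{2k+1}$-configuration. \cref{lemma:conditions_Di} then shows the corresponding class $D_i\subseteq H_2(X_{2k+1};\mathbb{Z})$ is ordinary and that both $D_1,D_2$ are primitive, so \cref{theorem:LiLi} applies and, via \cref{prop: determining d_i} and \cref{thrm:all_homology_classes}, the classes $D_1,D_2$ and the whole configuration $\mathcal{C}_{2k+1}$ are pinned down up to diffeomorphism. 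This puts us precisely in the hypotheses of \cref{prop:main_calculation}, whose case $n=2k+1$ returns $(S^2\times S^2,\omega_{a,b})$ with $\frac{a}{k+1}<b<2a$ upon blowing the configuration back down, which is the claim.

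The step I expect to be the crux — already carried out in \cref{prop:main_calculation} via \cref{cor: mu_i periods} — is the translation of positivity of exceptional areas into the two linear inequalities. One expresses the periods $\mu_{n-1},\mu_n$ of the last two exceptional classes of $X_{2k+1}$ as $\frac{(n+2)d_2-4d_1}{n^2}$ and $\frac{(n+2)d_1-(n+1)d_2}{n^2}$ minus positive combinations of the $c_i$; since $\mu_{n-1},\mu_n>0$ and the $c_i>0$, this forces $(n+2)d_2-4d_1>0$ and $(n+2)d_1-(n+1)d_2>0$. Substituting the areas $d_1=\omega_{a,b}(W_1)=a+(k+1)b$ and $d_2=\omega_{a,b}(W_2)=2a+b$, the first inequality becomes $(2k+1)(2a-b)>0$, i.e.\ $2a>b$, and the second becomes $(2k+1)\bigl((k+1)b-a\bigr)>0$, i.e.\ $(k+1)b>a$. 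Together these give $2a>b>\frac{a}{k+1}$, as desired. The only genuinely delicate point in the whole argument is the one isolated in \cref{lemma:conditions_Di}: verifying that the smooth sphere furnished by liminality is \emph{ordinary} rather than characteristic, which is what licenses the use of \cref{theorem:LiLi} in the blown-up manifold.
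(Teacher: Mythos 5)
Your proposal is correct and follows the same route as the paper: the theorem is stated there as an immediate consequence of \cref{prop:main_calculation}, and your write-up simply unpacks that proposition's internal chain (\cref{Lemma_Rational_Blowup_in_S2S2_X1}, \cref{lem:hom_complement}, \cref{lemma:conditions_Di}, \cref{theorem:LiLi}, \cref{thrm:all_homology_classes}, \cref{cor: mu_i periods}) in the same order, with the same substitutions $d_1=a+(k+1)b$, $d_2=2a+b$ and the same resulting inequalities. The arithmetic checks out, and your identification of the ordinariness verification in \cref{lemma:conditions_Di} as the delicate step matches the paper's emphasis.
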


and

\begin{theorem}[\cref{thrm:B}]\label{thrm:x1_obstruction}
If $L_{2k,1}$ is a liminal pinwheel in $(X_1,\omega_{h,\mu})$ then 
\[\mu<\frac{k}{k+1}h.\]
\end{theorem}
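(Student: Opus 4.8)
The plan is to obtain this as an immediate specialization of \cref{prop:main_calculation} to the case $n = 2k$. Suppose $L_{2k,1}$ is a liminal pinwheel in $(X_1,\omega_{h,\mu})$. By \cref{Lemma_Rational_Blowup_in_S2S2_X1}, performing the symplectic rational blow-up along $L_{2k,1}$ produces the manifold $X_{2k}$ equipped with some symplectic form $\omega_{h,\boldsymbol{\mu}}$, and by the discussion in \cref{Section_Blowing_Up_Pinwheels} this blow-up introduces a $C_{2k}$-configuration of symplectic spheres. So $(X_{2k},\omega_{h,\boldsymbol{\mu}})$ carries an $\mathcal{F}_{2k} = \{\mathcal{C}_{2k},\mathcal{D}_{2k}\}$ configuration that genuinely stems from blowing up a liminal pinwheel.

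The first thing I would check is that the hypotheses of \cref{prop:main_calculation} are met. By \cref{lem:hom_complement}, the homological orthogonal complement of $[L_{2k,1}] = kH+(k+1)E$ is $\mathcal{W}_{2k} = \langle (k+1)H-kE,\, 2H\rangle$, and the liminality condition of \cref{def:liminal_pinwheel} says precisely that $L_{2k,1}$ is disjoint from a smooth sphere in one of these two classes. Hence one of $D_1,D_2$ --- the images of the generators of $\mathcal{W}_{2k}$ under the blow-up --- is represented by an embedded sphere, so \cref{lemma:conditions_Di}, \cref{theorem:LiLi} and \cref{thrm:all_homology_classes} apply and pin down the homology classes of the configuration $\mathcal{F}_{2k}$; in particular one of the $D_i$ satisfies the conditions of \cref{thrm:all_homology_classes}.

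With these hypotheses verified, \cref{prop:main_calculation}(2) applies verbatim: blowing down the $C_{2k}$-configuration leads to $(X_1,\omega_{h,\mu})$ with $\mu < \frac{k}{k+1}h$. Since the symplectic rational blow-down along this $C_{2k}$-configuration recovers the original $(X_1,\omega_{h,\mu})$ we started from, this is exactly the claimed inequality. There is in fact no obstacle left at this stage: all of the work --- determining $\mathcal{D}_{2k}$ and $\mathcal{C}_{2k}$ via \cref{theorem:LiLi} (the genuinely delicate part being the Dehn-twist bookkeeping of \cref{lemma:Ci_config_2spheres} and \cref{lemma:C0_config}), and then extracting the sign constraints $(n+2)d_1-(n+1)d_2>0$ and $(n+2)d_2-4d_1>0$ from positivity of the exceptional areas $\mu_{n-1},\mu_n$ in \cref{cor: mu_i periods} --- has already been carried out. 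The only content of this theorem is recording the outcome of that calculation for even $n$, substituting $d_1=(k+1)h-k\mu$ and $d_2=2h$.
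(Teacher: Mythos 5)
Your proposal is correct and matches the paper exactly: the paper states this theorem as an immediate consequence of \cref{prop:main_calculation} (preceded only by the phrase "\cref{prop:main_calculation} then implies"), and your verification that the hypotheses of that proposition are satisfied — via \cref{Lemma_Rational_Blowup_in_S2S2_X1}, \cref{lem:hom_complement}, the liminality condition, and \cref{thrm:all_homology_classes} — together with the substitution $d_1=(k+1)h-k\mu$, $d_2=2h$ is precisely the intended argument.
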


\section{Corollaries of \texorpdfstring{\cref{thrm:A}}{Theorem A} and \texorpdfstring{\cref{thrm:B}}{Theorem B}}
\label{sec:corollaries}

\subsection{Khodorovskiy's smooth embeddings of rational homology balls}

In \cite{Kho14:Embratball} Khodorovskiy proves the following two theorems.

\begin{theorem}(\cite[Theorem 1.2]{Kho14:Embratball})\label{thrm:Kho_-n-1}
Assume that $n\geq2$ is an integer and let $V_{-n-1}$ be a neighborhood of a sphere with self-intersection number $-n-1$.
Then, there exists a smooth embedding $B_{n,1} \hookrightarrow V_{-n-1}$.
\end{theorem}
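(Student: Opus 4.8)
This is \cite[Theorem~1.2]{Kho14:Embratball}, so the task is to recall the construction; we sketch it, as the configuration $C_n$ it produces recurs throughout the paper.

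The plan is a Kirby-calculus argument, starting from the standard handle decomposition of $V_{-n-1}$: a $0$-handle together with a $2$-handle attached to $B^4$ along a $(-n-1)$-framed unknot, whose boundary is the lens space $\partial V_{-n-1}=L(n+1,n)$. First I would blow up $n-1$ times, performing the first blow-up at a point of the zero-section sphere and each subsequent blow-up at a point of the exceptional sphere just created, chosen off all previously built spheres. After these blow-ups, $V_{-n-1}\#(n-1)\overline{\mathbb{C}P^2}$ contains a linear plumbing of $n-1$ spheres with self-intersections $-(n+2),-2,\dots,-2$ — that is, a $C_n$-configuration in the sense of \cref{Section_Blowing_Up_Pinwheels} — meeting one additional $(-1)$-sphere at the end of the chain.

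Second, I would rationally blow down this $C_n$-configuration, replacing a neighborhood of it by the rational homology ball $B_{n,1}$. This yields a simply connected $4$-manifold $Z$ with $b_2(Z)=(1+(n-1))-(n-1)=1$ and $\partial Z=\partial V_{-n-1}$, inside which $B_{n,1}$ sits as the glued-in piece. It then remains to recognize $Z$ as $V_{-n-1}$ with $B_{n,1}$ embedded in it. The cleanest way is to run the whole procedure at the level of Kirby diagrams: the standard diagram of $V_{-n-1}$, the $n-1$ blow-ups (meridional $(-1)$-framed unknots), and the rational-blow-down move produce a diagram which, after handle slides and cancellations, simplifies back to the $(-n-1)$-framed unknot, now carrying a recognizable sub-handlebody realizing $B_{n,1}$. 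As a homological check, one computes that the orthogonal complement of the $C_n$-classes in $H_2\big(V_{-n-1}\#(n-1)\overline{\mathbb{C}P^2};\mathbb{Z}\big)$ is generated by a class of self-intersection $-n^2(n+1)$; the $\mathbb{Z}/n$-torsion contributed by $H_1(B_{n,1})$ makes this class $n$-divisible in $H_2(Z)$, so that the free part of $H_2(Z)$ is $\mathbb{Z}$ with intersection form $\langle -(n+1)\rangle$, as it must be if $Z=V_{-n-1}$.

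The main obstacle is precisely this last step. The rational blow-down only controls the result up to the homeomorphism type — through $\pi_1$ and the intersection form — so one genuinely has to track the $2$-handle framings through the $n-1$ blow-ups and the blow-down move and verify that the simplified Kirby diagram is the honest one for $V_{-n-1}$ with $B_{n,1}$ visibly embedded; this framing bookkeeping is the delicate part of Khodorovskiy's argument. Finally, note that the embedding produced this way cannot be symplectic — that is exactly the content of \cref{thrm:Kho}.
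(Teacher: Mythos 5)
The paper does not actually prove this statement: it is imported verbatim from Khodorovskiy, \cite[Theorem 1.2]{Kho14:Embratball}, and only used here as the smooth counterpoint to \cref{thrm:Kho}. Your sketch is a faithful reconstruction of Khodorovskiy's own argument --- blow up $n-1$ times to exhibit a $C_n$-configuration together with a transverse $(-1)$-sphere inside $V_{-n-1}\#(n-1)\overline{\mathbb{C}P^2}$, rationally blow down, and identify the result with $V_{-n-1}$ rel boundary by Kirby calculus --- and your homological cross-checks are correct (the complementary class $[S]+(n+1)(E_1+\cdots+E_{n-1})$ has square $-n^2(n+1)$ and becomes $n$-divisible after the blow-down, giving back $\langle -(n+1)\rangle$). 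The one piece of genuine content you do not supply is the handle-slide verification that the blown-down manifold is $V_{-n-1}$ itself rather than merely a manifold with the same boundary and intersection form; you correctly identify this as the delicate step, and it is precisely what Khodorovskiy's explicit Kirby diagrams provide.
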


\begin{theorem}(\cite[Theorem 1.3]{Kho14:Embratball})\label{thrm:Kho_-4_and_2}
Let $V_{-4}$ be a neighborhood of a sphere with self-intersection number $-4$ and assume that $n\geq 3$ is an odd integer. Then, there exists a smooth embedding $B_{n,1} \hookrightarrow V_{-4}$.

Moreover, if $n\geq 2$ is an even integer, there exists a smooth embedding $B_{n,1} \hookrightarrow B_{2,1}\#\mathbb{C}P^2$.
\end{theorem}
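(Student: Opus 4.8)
The plan is to prove these \textbf{smooth} embedding statements by explicit handle calculus, exhibiting $B_{n,1}$ as a sub-handlebody of the target. It is worth stressing at the outset that the symplectic constructions of \cref{Section-1-Construction} cannot be recycled here: compactifying $(V_{-n-1},\tau)$ produces, by \cref{lemma:compactification}, a symplectic form with $\mu>\frac{k}{k+1}h$ (resp.\ $\frac{a}{k+1}>b$), which is exactly the regime in which \cref{Corollary_Construction_X1} (resp.\ \cref{Corollary_Construction_S2xS2}) fails to yield a pinwheel — indeed \cref{thrm:Kho} asserts that no symplectic embedding exists. So one genuinely needs a purely smooth, diagrammatic argument. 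First I would fix a Kirby diagram for $B_{n,1}$: it carries a handle decomposition with one $0$-handle, one $1$-handle and one $2$-handle, hence is presented by a dotted unknot $C$ together with a $2$-handle along a curve $K_n$ running $n$ times through $C$, with the unique framing $\phi_n$ for which the boundary is $L(n^2,n-1)$ and $\pi_1=\mathbb{Z}/n$. A convenient model for $K_n$ comes directly from the almost toric diagram $\Delta_{n,1}$ of \cref{fig:atfrationalhomologyball_n_1}.

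For \cref{thrm:Kho_-n-1} I would begin from the standard one-handle picture of $V_{-n-1}$ — a single $2$-handle along a $(-n-1)$-framed unknot $U$ — introduce a cancelling $1$-/$2$-handle pair (a dotted unknot $C$ and a $0$-framed meridian $m$ of $C$), and then, via handle slides of $U$ over $m$ together with isotopies dragging $U$ through the $1$-handle $C$, wrap $U$ into a curve isotopic to $K_n$. Each slide over a $2$-handle changes the framing of $U$ by a term linear in the relevant linking number, so the slides that build up winding number $n$ accumulate a correction quadratic in $n$; one then checks that, starting from framing $-(n+1)$, this correction lands the framing of $U$ exactly on $\phi_n$. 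What remains is the diagram $\{C,K_n\}$ for $B_{n,1}$ together with the leftover $2$-handle $m$, attached last; pushing $B_{n,1}$ in along its collar gives $B_{n,1}\hookrightarrow V_{-n-1}$. Equivalently, one verifies that $V_{-n-1}=B_{n,1}\cup(\text{one }2\text{-handle})$ by confirming that the resulting $4$-manifold is simply connected with $b_2=1$, intersection form $(-n-1)$, and boundary $L(n+1,n)$.

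For \cref{thrm:Kho_-4_and_2} I would run the same calculation starting instead from the $(-4)$-framed unknot presenting $V_{-4}$. Adding cancelling pairs and sliding wraps this unknot into an $n$-fold winding curve, and the accumulated framing correction produces exactly $\phi_n$, hence the $B_{n,1}$ diagram, precisely when $n$ is odd — giving $B_{n,1}\hookrightarrow V_{-4}$. For even $n$ the framing parity is off by one, and the same moves instead produce the $B_{n,1}$ diagram together with a $B_{2,1}$-diagram and one extra unknotted $2$-handle, i.e.\ $B_{n,1}\hookrightarrow B_{2,1}\#\mathbb{C}P^2$; this odd/even dichotomy mirrors the one already visible in the pinwheel constructions of \cref{Section-1-Construction} and in \cref{Lemma_Compactification_S2S2_X1}. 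An alternative, more in the spirit of this paper, is to reverse the compactification procedure of \cref{subsec:compactification}: perform a nodal trade at the corner of a toric model of $V_{-4}$ (resp.\ of $B_{2,1}\#\mathbb{C}P^2$) and run the mutation/nodal-slide sequence of \cref{fig:atfrationalhomologyball_compactified_ellipsoids} backwards until $\Delta_{n,1}(\alpha,\beta)$ appears as a visible sub-diagram; since $V_{-4}$ is only a neighborhood, the sizes $\alpha,\beta$ may be taken arbitrarily large, so no volume constraint intervenes.

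The main obstacle is the Kirby-calculus bookkeeping: tracking the framing of the original unknot through the sequence of handle slides that turn it into the $n$-fold winding curve, confirming that it lands on the framing $\phi_n$ characterizing $B_{n,1}$, and isolating the precise reason why the odd case closes up onto $V_{-4}$ while the even case forces the extra $\mathbb{C}P^2$-summand — in particular, checking $\pi_1$ and the boundary lens space at each intermediate stage. Once these diagrammatic verifications are in place the statements follow, and the fact that the argument is entirely smooth is exactly what lets these embeddings coexist with the symplectic non-embedding results of \cref{thrm:Kho}.
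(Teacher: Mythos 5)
First, a point of orientation: the paper does not prove this statement at all — it is quoted verbatim from Khodorovskiy \cite{Kho14:Embratball} as an external input, and the paper's contribution (\cref{thrm:Kho}) is precisely that these smooth embeddings cannot be made symplectic. So there is no in-paper proof to match; the relevant comparison is with Khodorovskiy's original argument, which is indeed a Kirby-calculus argument in the general spirit of your first route. Your outline of that route, however, is a sketch rather than a proof: the two decisive numerical facts — that the handle slides building up winding number $n$ carry the framing from $-(n+1)$ (resp.\ $-4$) exactly onto the framing $\phi_n$ of the $B_{n,1}$ diagram, and that this works "precisely when $n$ is odd" in the $-4$ case — are asserted, not computed, and they are the entire content of the theorem. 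The description of the even case is also garbled: the second statement concerns embeddings into $B_{2,1}\#\mathbb{C}P^2$, a different target, so it cannot simply "fall out" of the same moves performed on the $V_{-4}$ diagram. Finally, your "equivalently, one verifies that $V_{-n-1}=B_{n,1}\cup(\text{one }2\text{-handle})$ by confirming $\pi_1$, $b_2$, the intersection form and the boundary" is not a valid substitute for the Kirby moves — matching those invariants does not establish a diffeomorphism of $4$-manifolds with boundary; only the explicit sequence of slides does.

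The proposed "alternative, more in the spirit of this paper" is not merely incomplete but impossible. If you could run the mutation/nodal-slide sequence backwards inside an almost toric fibration on $(V_{-4},\tau)$ until $\Delta_{n,1}(\alpha,\beta)$ appears as a visible sub-diagram, you would have produced a \emph{symplectic} embedding of $B_{n,1}$ into $(V_{-4},\tau)$ — exactly what \cref{thrm:Kho} (via \cref{lemma:compactification} and \cref{thrm:A}) rules out for odd $n\geq 3$. The remark that "the sizes $\alpha,\beta$ may be taken arbitrarily large, so no volume constraint intervenes" misses the point: the obstruction is not a volume constraint but the period inequality extracted from the rational blow-up, and it applies to the neighborhood itself after compactification. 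You correctly flag this tension in your opening paragraph and should have concluded from it that no ATF-based construction can exist; instead the alternative reintroduces it. To make the proposal into a proof you must either carry out the Kirby-calculus bookkeeping in full (as Khodorovskiy does) or find a genuinely different smooth — and necessarily non-symplectic — construction.
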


\cref{thrm:A} and \cref{thrm:B} readily imply that these embeddings cannot be realized symplectically, except for embeddings into $B_{2,1}\#\mathbb{C}P^2$. 
Note that the "soft" topological obstruction, namely the formula for the self intersection of Lagrangian pinwheels, proven in \cref{Lag_pinwheels_self_int} does not exclude the existence of symplectic embeddings. 

\begin{corollary}[Theorem \ref{thrm:Kho}]
Let $(V_{-n},\tau)$ be a standard symplectic neighborhood of a symplectic $(-n)$-sphere. Then:
\begin{enumerate}
    \item 
        If $n=2k+1$ for $k\geq 1$, there are no symplectic embeddings of $B_{2k+1,1}$ in $V_{-2k-2}$ or $V_{-4}$, even though smooth ones exist.
    \item 
        If $n=2k$ there are no symplectic embeddings of $B_{2k,1}$ in $V_{-2k-1}$, even though smooth ones exist.
\end{enumerate}
\end{corollary}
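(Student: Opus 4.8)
The plan is to read off the statement from \cref{thrm:A} and \cref{thrm:B} after compactifying, via \cref{lemma:compactification}. Since $V_{-2k-2}$ for $B_{2k+1,1}$ and $V_{-2k-1}$ for $B_{2k,1}$ are both of the shape $V_{-n-1}$, it suffices to rule out the embeddings $B_{n,1}\hookrightarrow V_{-n-1}$ and $B_{2k+1,1}\hookrightarrow V_{-4}$. Assume for contradiction that $B_{n,1}$ embeds symplectically into $V_{-n-1}=\nu S$, a standard symplectic neighborhood of a symplectic $(-n-1)$-sphere $S$; the embedding takes the Lagrangian skeleton of $B_{n,1}$ to a Lagrangian $L_{n,1}$-pinwheel $L$ lying in the interior of $\nu S$. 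Compactify $V_{-n-1}$ by a symplectic cut as in \cref{lemma:compactification}: the result is the rational and ruled $(X,\omega)$, namely $(S^2\times S^2,\omega_{a,b})$ with $\tfrac{a}{k+1}>b$ when $n=2k+1$, or $(X_1,\omega_{h,\mu})$ with $\mu>\tfrac{k}{k+1}h$ when $n=2k$, and $S$ is now the negative section of a ruling, of class $A-(k+1)B$ respectively $-kH+(k+1)E$. The cut glues in the positive section $C_\infty$, a symplectic sphere with $[C_\infty]\cdot[S]=0$ and $[C_\infty]^2=n+1$; adjunction together with these intersection numbers forces $[C_\infty]=A+(k+1)B$ respectively $(k+1)H-kE$ (compare \cref{Lemma_Compactification_S2S2_X1}). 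As $X\setminus C_\infty$ is diffeomorphic to the interior of $\nu S$, which contains $L$, the pinwheel is disjoint from the smooth sphere $C_\infty$, whose class is one of those appearing in \cref{def:liminal_pinwheel}.

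To conclude that $L$ is liminal I must also pin down $[L]\in H_2(X;\mathbb{Z}_n)$. Since $L\subseteq\nu S$ and $\nu S$ retracts onto $S\cong S^2$, the class $[L]$ lies in the image of $H_2(\nu S;\mathbb{Z}_n)\to H_2(X;\mathbb{Z}_n)$, hence $[L]=c\,[S]$ for some $c\in\mathbb{Z}_n$. By the adjunction formula for the symplectic sphere $S$ we have $c_1(X)([S])=[S]^2+2=1-n\equiv1\pmod n$, while \cref{remark:pinwheels_soft_obstruction} (via \cite[Lemma~2.13]{EvSm18}) gives $c_1(X)([L])\equiv1\pmod n$; therefore $c\equiv1\pmod n$ and $[L]=[S]$ in $H_2(X;\mathbb{Z}_n)$. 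Since $A-(k+1)B\equiv A+kB\pmod{2k+1}$ and $-kH+(k+1)E\equiv kH+(k+1)E\pmod{2k}$, the pinwheel $L$ is liminal, so by \cref{thrm:s2xs2_obstruction} (respectively \cref{thrm:x1_obstruction}, or \cref{thrm: kronh} when $n=2$) we get $b>\tfrac{a}{k+1}$ (respectively $\mu<\tfrac{k}{k+1}h$), contradicting \cref{lemma:compactification}. This disposes of the $V_{-n-1}$ cases.

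For $B_{2k+1,1}\hookrightarrow V_{-4}$ the argument is the same, now using the $2A+B$ alternative in \cref{def:liminal_pinwheel}. Since $V_{-4}=V_{-(n'+1)}$ with $n'=3=2\cdot1+1$, \cref{lemma:compactification} compactifies $V_{-4}$ to $(S^2\times S^2,\omega_{a,b})$ with $[S]=A-2B$ and $a>2b$, and exactly as above $L$ is disjoint from a positive section $C_\infty$ of class $A+2B$. Here $c_1(X)([S])=[S]^2+2=-2$, so $[L]=c\,[S]$ with $-2c\equiv1\pmod{2k+1}$, forcing $c\equiv-(k+1)\equiv k\pmod{2k+1}$ and hence $[L]=k[S]=kA-2kB\equiv kA+B$ in $H_2(S^2\times S^2;\mathbb{Z}_{2k+1})$. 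Composing the whole picture with the factor-swap symplectomorphism $(S^2\times S^2,\omega_{a,b})\to(S^2\times S^2,\omega_{b,a})$ turns $L$ into a Lagrangian $L_{2k+1,1}$-pinwheel of class $A+kB$ disjoint from a smooth sphere of class $2A+B$ — that is, a liminal pinwheel in $(S^2\times S^2,\omega_{b,a})$. Reading \cref{thrm:s2xs2_obstruction} for this form gives $2b>a$, which contradicts $a>2b$.

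I expect the difficulty to lie entirely in the bookkeeping rather than in any new idea: one must verify that the symplectic cut leaves a standard neighborhood of $L$ untouched (so that $L$ is honestly a Lagrangian pinwheel in $X$ and \cref{thrm:A}, \cref{thrm:B} apply), that the glued-in sphere $C_\infty$ is genuinely disjoint from $L$, that the class identifications $[C_\infty]$ and $[L]=[S]$ are internally consistent in both parities of $n$, and — in the $V_{-4}$ case — one must keep careful track of which labeling of $S^2\times S^2$ and which ruling put the pinwheel into the exact homological position demanded by \cref{def:liminal_pinwheel}.
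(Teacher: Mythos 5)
Your proposal is correct and follows essentially the same route as the paper: compactify $V_{-n-1}$ (resp.\ $V_{-4}$) via \cref{lemma:compactification}, pin down $[L]$ through the soft $c_1$-obstruction of \cref{remark:pinwheels_soft_obstruction}, and play the resulting period inequality against \cref{thrm:A}/\cref{thrm:B}. You are in fact more explicit than the paper on two points it leaves implicit — identifying the glued-in positive section $C_\infty$ as the disjoint smooth sphere certifying liminality, and the factor-swap bookkeeping in the $V_{-4}$ case — but these are elaborations of the same argument, not a different one.
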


\begin{proof}
We prove the case $n=2k+1$. Assume that $B_{2k+1,1}$ symplectically embeds into $V_{-2k-2}$. Compactifying $V_{-2k-2}$ as in Lemma \ref{lemma:compactification} yields $(S^2\times S^2,\omega_{a,b})$. 
Fixing the core sphere of $V_{-2k-2}$ to represent the class $A-(k+1)B \in H_2(S^2 \times S^2;\mathbb{Z})$ in the compactification means that the core pinwheel of $B_{2k+1,1}$ represents the $\mathbb{Z}_{2k+1}$-class $A+kB$.\footnote{This follows easily from the soft obstruction in \cref{remark:pinwheels_soft_obstruction}.}
Now, the period inequalities we get from Lemma \ref{lemma:compactification} are \textbf{exactly} the opposite to those coming from Theorem \ref{thrm:A}, and thus contradicting the existence of the original symplectic embedding.

The case of an embedding into $V_{-4}$ as well as the case $n=2k$ are completely analogous.
\end{proof}

\begin{remark}
    The fact that there are symplectic embeddings $B_n \hookrightarrow B_2\#\mathbb{C}P^2$ for $n\geq 2$ even, for certain symplectic forms on $ B_2\#\mathbb{C}P^2$, follows from our constructions outlined in \cref{Section-1-Construction}. 
    The complement of the quadric in $\mathbb{C}P^2$ is symplectomorphic to $B_2$ and so the complement of a conic in $X_1$ is symplectomorphic to $B_2\#\mathbb{C}P^2$. 
\end{remark}

\subsection{A non-squeezing result}

Here is the non-squeezing theorem for rational homology balls, analogous to Gromov's famous non-squeezing theorem.

\begin{theorem}\label{thrm: non-squeezing}
    There exists a symplectic embedding $\iota:B_{n,1}(1)\hookrightarrow B_{n,1}(\alpha,\infty)$ if and only if $\alpha \geq 1$.
\end{theorem}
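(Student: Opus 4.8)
The plan is to deduce this non-squeezing statement as a direct corollary of the rigidity theorems \cref{thrm:A} and \cref{thrm:B}, combined with the compactification machinery developed in \cref{subsec:compactification}. The "if" direction is the easy, constructive half: when $\alpha\geq 1$ one writes down an explicit symplectic embedding. Concretely, $\Delta_{n,1}(1,1)\subseteq\Delta_{n,1}(\alpha,\infty)$ essentially by inclusion of the defining almost toric domains — the triangle spanned by $(n^2,n-1)$ and $(0,1)$ sits inside the half-open vertical strip above $\alpha(n^2,n-1)$ as soon as $\alpha\geq 1$ — so one only has to check that this inclusion of base diagrams can be upgraded to an honest symplectic embedding, which follows from the standard fact that ATF base diagrams which are contained in one another yield symplectic embeddings of the total spaces (as in \cite{Sym02:Fourtwo}), possibly after a harmless nodal slide shortening the branch cut. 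I would spend a sentence or two making this precise and noting the boundary/openness conventions in \cref{Definition_rational_homology_ball_embeddings} so that the half-open domains match up.

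The "only if" direction is the heart of the matter and is where the main theorems enter. Suppose $\iota:B_{n,1}(1)\hookrightarrow B_{n,1}(\alpha,\infty)$ is a symplectic embedding; I want to force $\alpha\geq 1$. The strategy is to compactify. Pick $\beta$ large enough (in particular $\beta>(n-1)\alpha$) and form the closed symplectic manifold $(X_{n,1}(\alpha,\beta),\omega_{\alpha,\beta})$ by the two symplectic cuts of \cref{subsec:compactification}; by \cref{Lemma_Compactification_S2S2_X1} this is $S^2\times S^2$ or $X_1$ depending on the parity of $n$, and by \cref{lemma_computing_sizes_compactification} its symplectic form has periods expressed in $\alpha,\beta$. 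Since $B_{n,1}(\alpha,\infty)\supseteq B_{n,1}(1)$, and $X_{n,1}(\alpha,\beta)$ contains $B_{n,1}(\alpha,\infty)$ as the complement of the compactifying divisors, the composite gives a symplectic embedding $B_{n,1}(1)\hookrightarrow X_{n,1}(\alpha,\beta)$, and in particular the Lagrangian pinwheel core of $B_{n,1}(1)$ sits inside $X_{n,1}(\alpha,\beta)$ as an $L_{n,1}$-pinwheel. By \cref{Remark_Homology_class_pinwheel_compactification} this pinwheel represents the class $A+kB$ (if $n=2k+1$) or $kH+(k+1)E$ (if $n=2k$), which is exactly the homological data in \cref{def:liminal_pinwheel}; moreover the compactifying spheres $\Sigma_+,\Sigma_-$ realize classes $A+(k+1)B$, $2A+B$ (resp.\ $(k+1)H-kE$, $2H$) disjoint from the pinwheel, so this is a \emph{liminal} pinwheel in the sense of \cref{def:liminal_pinwheel}. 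Hence \cref{thrm:A} / \cref{thrm:B} apply and impose the period inequalities \cref{ineq:s2xs2} / \cref{ineq:X1} on $\omega_{\alpha,\beta}$.

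The final step is a one-line computation: substitute the period formulas of \cref{lemma_computing_sizes_compactification} into the inequalities \cref{ineq:s2xs2} / \cref{ineq:X1}. For $n=2k+1$ one has $a=(k+1)\beta-k\alpha$ and $b=\beta+\alpha$, and the condition $b<2a$ reads $\beta+\alpha<2((k+1)\beta-k\alpha)$, i.e.\ $(2k+1)\alpha<(2k+1)\beta$, which is automatic, while $\frac{a}{k+1}<b$ gives no obstruction either — so instead I would feed the \textbf{existence} half of \cref{thrm:A} backwards, or more cleanly compare the volume and the area of $\Sigma_-$ as flagged in \cref{remark:sizes_compactification}. Actually the cleanest route, and the one I would write up, is: the domain $B_{n,1}(1)$ must have symplectic volume no larger than that of $B_{n,1}(\alpha,\infty)$ is vacuous (the latter is infinite), so volume alone says nothing; rather, the obstruction is genuinely the pinwheel rigidity, and tracing through it the relevant inequality becomes $\alpha\geq 1$ after noting that the \emph{sharpness} clause of \cref{thrm:A}/\cref{thrm:B} — which says the bound fails exactly when the strict inequality is violated — forces $\alpha<1$ to be incompatible with the existence of the liminal pinwheel for $\beta$ large. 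I expect the main obstacle to be bookkeeping: correctly matching the half-open conventions of \cref{Definition_rational_homology_ball_embeddings}, choosing $\beta$ in the admissible range $\beta>(n-1)\alpha$, and checking that the limit $\beta\to\infty$ (corresponding to $B_{n,1}(\alpha,\infty)$ rather than a finite $B_{n,1}(\alpha,\beta)$) does not lose the obstruction — i.e.\ that the inequality one extracts is uniform in $\beta$ and isolates precisely $\alpha\geq 1$.
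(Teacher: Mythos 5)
Your constructive half and the compactification setup are fine, but the obstructive half has a genuine gap, and you have in fact detected it yourself: when you substitute the periods $a=(k+1)\beta-k\alpha$, $b=\beta+\alpha$ of \cref{lemma_computing_sizes_compactification} into the inequalities of \cref{thrm:A}, you get $\alpha<\beta$ and $0<(2k+1)\alpha$, i.e.\ no constraint at all. This is not a bookkeeping issue that a cleverer choice of $\beta$ will fix: the compactification $X_{n,1}(\alpha,\beta)$ \emph{always} contains a visible liminal pinwheel over its branch cut, for every $\alpha,\beta$ in the admissible range, so the purely qualitative statements of \cref{thrm:A}/\cref{thrm:B} are automatically satisfied and can never distinguish whether a $B_{n,1}(1)$ of "size one" embeds. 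Your fallback — invoking the sharpness clause to claim that $\alpha<1$ is "incompatible with the existence of the liminal pinwheel for $\beta$ large" — does not work either: the sharpness clause is the \emph{existence} direction (if the inequality holds, a liminal pinwheel exists) and provides no obstruction whatsoever.

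The missing idea is that the proof must be run \emph{quantitatively}: one rationally blows up the embedded $B_{n,1}(1)$ itself, choosing the symplectic areas $c_0,\dots,c_{n-2}$ of the $C_n$-configuration so as to record the fact that the rational homology ball being replaced has size $1$ (concretely, $c_0=4-\sum_{j=2}^{n-1}(j(n+1)+1)\epsilon$, $c_1=n-2$, $c_i=\epsilon$ otherwise; see \cref{Appendix:sizes_blow-up}). One then feeds these $c_i$, together with $d_1=(n+1)\beta+\alpha$ and $d_2=(n+2)\beta-(n-2)\alpha$, into the period formula of \cref{cor: mu_i periods}; the $\beta$-dependence cancels and one finds $\mu_n=\alpha-1+\epsilon$, whence positivity of the exceptional area and arbitrariness of $\epsilon$ give $\alpha\geq1$. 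In other words, the theorem is not a corollary of \cref{thrm:A}/\cref{thrm:B} as black boxes — those only see the homology class of the pinwheel and the ambient periods — but of the finer statement \cref{cor: mu_i periods} applied with the specific sphere areas dictated by the size of the embedded ball. Your write-up needs this extra input to close.
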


The proof follows along the same lines as the proofs before, i.e.\ as the proofs of Theorems \ref{thrm:A},\ref{thrm:B} and \ref{thrm:Kho}, the difference now being that we will use quantitative information on the sizes involved in the blow-up, i.e.\ we will specify the areas of the chain of spheres introduced through the rational blow-up. Understanding exactly how these areas can be chosen will give us the constraint on $\alpha$.

\begin{proof}
As in the proof of Gromov's non-squeezing theorem the constructive direction of the proof is trivial.
Hence, we prove the obstructive part of the theorem. 

Assume that $\alpha > 0$ and that there is a symplectic embedding $\iota:B_{n,1}(1)\hookrightarrow B_{n,1}(\alpha,\infty)$. 
The image of this embedding has to be bounded, meaning that there is a $\beta>0$ such that $B\vcentcolon=\overline{\iota(B_{n,1}(1))}\subseteq P_{n,1}(\alpha,\beta)$, where $P_{n,1}(\alpha,\beta)$ is the open rational homology "polydisc", whose moment image is shown in \cref{fig:atfrationalblowup_non_squeezing_theorem}.

Now we view the embedding $\iota$ as an embedding into the compactification $X_{n,1}(\alpha,\beta)$, i.e.\ consider, by abuse of notation, $B\subseteq X_{n,1}(\alpha,\beta)$. 
Note that this means that we possibly have to pick a larger $\beta$ in order to get admissible values $\alpha$ and $\beta$ for the compactification.\footnote{Recall that $X_{n,1}(\alpha,\beta)$ denotes the compactification introduced in \cref{subsec:compactification}.}
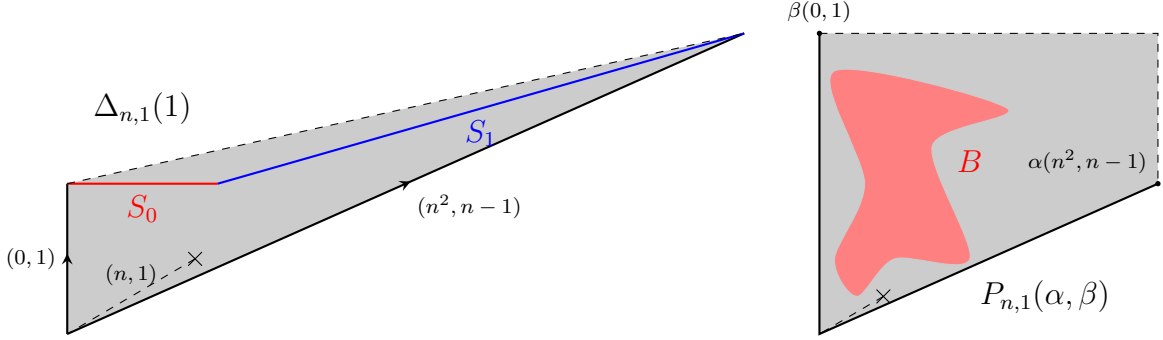
\begin{figure}[ht]
    \centering
        \begin{tikzpicture}
        \begin{scope}[shift={(-4.5,-2)}]
            \fill[opacity=0.2] 
                (0,2) -- (0,0) -- (9,4);
            \draw[thick, mid arrow] 
                (0,0) -- node[anchor=east] {\tiny $(0,1)$} 
                (0,2);
            \draw[thick, mid arrow] 
                (0,0) -- node[anchor=north west] {\tiny $(n^2,n-1)$} 
                (9,4);
            \node at (1,3) {$\Delta_{n,1}(1)$};
            \draw[thick, red]
                (0,2) -- node[anchor=north] {$S_0$} (2,2);
            \draw[thick, blue]
                (2,2) -- node[anchor=north] {$S_1$} (9,4);
            \draw[dashed] 
                (0,2) -- (9,4);
            \draw[dashed] 
                (1.7,1) node[cross] {} -- node[anchor=south] {\tiny $(n,1)$} (0,0);
        \end{scope}

        \begin{scope}[shift={(5.5,-2)}]
            \fill[opacity=0.2] (0,4) -- (0,0) -- (4.5,2) -- (4.5,4);
            \draw[thick] (0,4) -- (0,0) -- (4.5,2);
            \draw[dashed] (0.85,0.5) node[cross] {} -- (0,0);
            \draw[dashed] 
                (0,4) node[anchor=south] {\tiny$\beta(0,1)$} -- 
                (4.5,4) --
                (4.5,2) node[anchor=south east]{\tiny $\alpha(n^2,n-1)$};
            \node at (3,0.5) {$P_{n,1}(\alpha,\beta)$};
            \filldraw [fill=red!50,draw=black!20] plot [mark=none, smooth cycle] coordinates {(0.5,0.5) (1,1) (2,1) (1.5,2.5) (2.5,3) (0.2,3.5) (0.6,2) (0.2,1)};
            \node[red] at (2,2.3) {$B$};
            \fill (0,4) circle (1pt);
            \fill (4.5,2) circle (1pt);
        \end{scope}
        \end{tikzpicture}
    \caption{On the left the choice of sizes involved in the rational blow-up used during the proof of the rational non-squeezing theorem and on the right the rational homology "polydisc".}
    \label{fig:atfrationalblowup_non_squeezing_theorem}
\end{figure}
We now want to rationally blow-up this embedding.
Recall that performing the rational blow-up involves choosing the sizes of the chain of symplectic spheres $C_n$. 
We choose the sizes of the spheres in the chain as depicted in \cref{fig:atfrationalblowup_non_squeezing_theorem}. For details on why all these choices are valid compare \cref{Appendix:sizes_blow-up}. This means that for small enough $0<\epsilon$ we choose these areas to be
\begin{equation*}
    c_0=4-\sum_{j=2}^{n-1} \big(j(n+1)+1\big)\epsilon, \quad c_1=n-2 \quad \text{and} \quad c_i=\epsilon \text{ for } 2\leq i\leq n-2.
\end{equation*}
The visible pinwheel in $B$ is liminal, as it is disjoint from the spheres $\Sigma_+$ and $\Sigma_-$.\footnote{Recall that by \cref{Lemma_Compactification_S2S2_X1} the compactification $X_{n,1}(\alpha,\beta)$ is either $S^2 \times S^2$ or $X_1$.}
Therefore Corollary \ref{cor: mu_i periods} tells us that the manifold obtained via the blow-up contains an exceptional sphere of area
\begin{equation}\label{eq:mu_n_non-squeezing}
    \mu_{n}=\frac{1}{n^2}\Big((n+2)d_1-(n+1)d_2-\sum_{j=0}^{n-2}\big(1+j(n+1)\big)c_j\Big).
\end{equation}
Recall that we computed the sizes of the sphere $\Sigma_+$ and $\Sigma_-$ in \cref{lemma_computing_sizes_compactification} and \cref{remark:sizes_compactification}, i.e.\ we have that $\omega_{\alpha,\beta}(\Sigma_+)=(n+1)\beta + \alpha$ and that $\omega_{\alpha,\beta}(\Sigma_-)=\beta - (n-1)\alpha$. The positive real numbers $d_1$ and $d_2$ are the areas of the classes $W_1$ and $W_2$ respectively, which are related to the classes $[\widetilde{\Sigma}_+]$ and $[\widetilde{\Sigma}_-]$ via
\[W_1=[\widetilde{\Sigma}_+]\text{ and } W_2=[\widetilde{\Sigma}_+] + [\widetilde{\Sigma}_-],\]
thus giving
\[d_1=(n+1)\beta+\alpha\text{ and } d_2=(n+2)\beta-(n-2)\alpha.\]
Plugging these relations into Formula \ref{eq:mu_n_non-squeezing} for $\mu_n$ we get
\begin{equation*}
\mu_n=\alpha-1+\epsilon.
\end{equation*}
Since $\mu_n$ has to be positive and $\epsilon>0$ was arbitrary we obtain
\[\alpha\geq 1,\]
as claimed.
\end{proof}

\begin{remark}
    This strategy of proof also yields a proof of Gromov's non-squeezing result in dimension 4, which is the case $n=1$, and the case $n=2$, which will be carried out in \cite{BrEvHaSch25}.
\end{remark}

\appendix
\section{Solving for \texorpdfstring{$\mu_{n-1}$}{mun-1} and \texorpdfstring{$\mu_{n}$}{mun}}
\label{Appendix:solving_mu}

For the sake of completeness, we will explicitly perform the computation on which \cref{cor: mu_i periods} depends on. 
This is nothing more than elementary matrix operations. 
Let $d_i,c_r$ be the periods of the homology classes $D_i,S_r$ and let $h,\mu_i$ be the periods of the standard basis. Writing down the relation of $d_i,c_r$ to $h,\mu_i$ we have:

\begin{center}
\begin{tabular}{ c c c c c c c c c c}
  & $h$ & $\mu_1$ & $\mu_2$ & $\mu_3$ & $\cdots$ & $\mu_{n-3}$ & $\mu_{n-2}$ & $\mu_{n-1}$ & $\mu_{n}$ \\
 $c_{0}$ & $-2$ & $3$ & $-1$ & $-1$ & $\cdots$ & $-1$ & $-1$ & $0$ & $0$\\
 $c_{1}$ & $0$ & $0$ & $0$ & $0$ & $\cdots$ & $0$ & $1$ & $-1$ & $0$ \\ 
 $c_{2}$ & $0$ & $0$ & $0$ & $0$ & $\cdots$ & $1$& $-1$ & $0$ & $0$ \\
  
 $\vdots$ & $\vdots$ & $\vdots$ & $\vdots$ & $\vdots$ & & $\vdots$ & $\vdots$ & $\vdots$ & $\vdots$\\
 $c_{n-4}$ & $0$ & $0$ & $0$ & $1$ & $\cdots$ & $0$ & $0$ & $0$ & $0$ \\ 
 $c_{n-3}$ & $0$ & $0$ & $1$ & $-1$ & $\cdots$ & $0$ & $0$ & $0$ & $0$ \\ 
 $c_{n-2}$ & $1$ & $-1$ & $-1$ & $0$ & $\cdots$ & $0$ &$0$ & $0$ & $-1$\\
 $d_1$ & $n$ & $-(n-1)$ & $-1$ & $-1$ & $\cdots$ & $-1$ & $-1$ & $-1$ & $0$\\
 $d_2$ & $3$ & $-2$ & $0$ & $0$ & $\cdots$ & $0$ & $0$ & $0$ & $-1$
\end{tabular}
\end{center}

Substituting $c'_{n-3}=c_{n-3}$, $c'_{i}=c_i+c'_{i+1}$ for $1\leq i \leq n-4$, $c'_0=c_0-\sum_{i=1}^{n-3}c'_{i}+2c_{n-2}$ and $c_{n-2}'=c_{n-2}+c'_0$ we get

\begin{center}
\begin{tabular}{ c c c c c c c c c c}
  & $h$ & $\mu_1$ & $\mu_2$ & $\mu_3$ & $\cdots$ & $\mu_{n-3}$ & $\mu_{n-2}$ & $\mu_{n-1}$ & $\mu_{n}$ \\
 $c'_{0}$ & $0$ & $1$ & $-n+1$ & $0$ & $\cdots$ & $0$ & $0$ & $0$ & $-2$\\
 $c'_{1}$ & $0$ & $0$ & $1$ & $0$ & $\cdots$ & $0$ & $0$ & $-1$ & $0$ \\ 
 $c'_{2}$ & $0$ & $0$ & $1$ & $0$ & $\cdots$ & $0$& $-1$ & $0$ & $0$ \\
  
 $\vdots$ & $\vdots$ & $\vdots$ & $\vdots$ & $\vdots$ & & $\vdots$ & $\vdots$ & $\vdots$ & $\vdots$\\
 $c'_{n-4}$ & $0$ & $0$ & $1$ & $0$ & $\cdots$ & $0$ & $0$ & $0$ & $0$ \\ 
 $c'_{n-3}$ & $0$ & $0$ & $1$ & $-1$ & $\cdots$ & $0$ & $0$ & $0$ & $0$ \\ 
 $c'_{n-2}$ & $1$ & $0$ & $-n$ & $0$ & $\cdots$ & $0$ &$0$ & $0$ & $-3$\\
 $d_1$ & $n$ & $-(n-1)$ & $-1$ & $-1$ & $\cdots$ & $-1$ & $-1$ & $-1$ & $0$\\
 $d_2$ & $3$ & $-2$ & $0$ & $0$ & $\cdots$ & $0$ & $0$ & $0$ & $-1$
\end{tabular}
\end{center}

Finally, for $d'_1=d_1+(n-1)c'_0-(n+2)c'_1-(\sum_{i=2}^{n-3}c'_i)-nc'_{n-2}$
and $d'_2=d_2+2c'_0-(n+2)c'_1-3c'_{n-2}$ we get

\begin{center}
\begin{tabular}{ c c c c c c c c c c}
 0 & $h$ & $\mu_1$ & $\mu_2$ & $\mu_3$ & $\cdots$ & $\mu_{n-3}$ & $\mu_{n-2}$ & $\mu_{n-1}$ & $\mu_{n}$ \\
 $c'_{0}$ & $0$ & $1$ & $-n+1$ & $0$ & $\cdots$ & $0$ & $0$ & $0$ & $-2$\\
 $c'_{1}$ & $0$ & $0$ & $1$ & $0$ & $\cdots$ & $0$ & $0$ & $-1$ & $0$ \\ 
 $c'_{2}$ & $0$ & $0$ & $1$ & $0$ & $\cdots$ & $0$& $-1$ & $0$ & $0$ \\
  
 $\vdots$ & $\vdots$ & $\vdots$ & $\vdots$ & $\vdots$ & & $\vdots$ & $\vdots$ & $\vdots$ & $\vdots$\\
 $c'_{n-4}$ & $0$ & $0$ & $1$ & $0$ & $\cdots$ & $0$ & $0$ & $0$ & $0$ \\ 
 $c'_{n-3}$ & $0$ & $0$ & $1$ & $-1$ & $\cdots$ & $0$ & $0$ & $0$ & $0$ \\ 
 $c'_{n-2}$ & $1$ & $0$ & $-n$ & $0$ & $\cdots$ & $0$ &$0$ & $0$ & $-3$\\
 $d'_1$ & $0$ & $0$ & $0$ & $0$ & $\cdots$ & $0$ & $0$ & $n+1$ & $n+2$\\
 $d'_2$ & $0$ & $0$ & $0$ & $0$ & $\cdots$ & $0$ & $0$ & $n+2$ & $4$
\end{tabular}
\end{center}

This implies that $\mu_{n-1}=\frac{-4d'_1+(n+2)d'_2}{n^2}$ and $\mu_{n}=\frac{(n+2)d'_1-(n+1)d'_2}{n^2}$, which becomes

\begin{align*}
\label{eq:equation_for_mun}
    \mu_{n}&=\frac{1}{n^2}\Big((n+2)d_1-(n+1)d_2-\sum_{i=0}^{n-2}\big(1+i(n+1)\big)c_i\Big)\\
    \mu_{n-1}&=\frac{1}{n^2}\Big((n+2)d_2-4d_1-\Big((n-2)c_0+(n+2)\sum_{i=1}^{n-2}(n-(i+1))c_i\Big)\Big)
\end{align*}

\section{Blowing up while taking account of sizes}
\label{Appendix:sizes_blow-up}

Assume that $n\geq 3$ and consider the rational homology ball $B_{n,1}(1)$. 
We claim that for $\epsilon>0$ small enough there is a choice of rationally blowing $B_{n,1}(1)$ that introduces a chain of spheres $C_n=(S_0,\ldots,S_{n-2})$, such that the area of the spheres are given by 
\begin{equation}\label{eq:app_sizes}
    c_0=4-\sum_{j=2}^{n-1} \big(j(n+1)+1\big)\epsilon, \quad c_1=n-2 \quad \text{and} \quad c_i=\epsilon \text{ for } 2\leq i\leq n-2.
\end{equation}

To see this recall that the rational blow-up in the almost toric interpretation is done by replacing a standard neighborhood of the pinwheel in $B_{n,1}$ with the minimal resolution of the quotient singularity $\frac{1}{n}(1,n-1)$, as described in \cite[Chapter 9]{Ev23:Book} and \cref{Section_Blowing_Up_Pinwheels}. 
The geometry of the rational blow-up, i.e.\ all the vectors involved in the symplectic cutting procedure of the minimal resolution, is shown in \cref{fig:geometry_of_minimal resolution}.
\begin{figure}[ht]
    \centering
        \begin{tikzpicture}
        \begin{scope}
            \fill[opacity=0.2] 
                (0,0) -- (6,0) -- (11,3) -- (0,3);
            \draw[thick, mid arrow] 
                (0,0) -- node[anchor=east]{\tiny $(0,1)$} (0,3);
            \draw[thick, mid arrow]
                (6,0) -- node[anchor=north west]{\tiny $(n^2,n-1)$} (11,3);
            \node at (1,2) {\large $\Delta_{n,1}$};
            \draw[thick, mid arrow]
                (0,0.5) -- node[anchor=south]{$S_0$} node[anchor=north]{\tiny $(1,0)$} (2,0.5);
            \draw[thick, mid arrow]
                (2,0.5) -- node[anchor=south]{$S_1$} node[below right]{\tiny $(n+2,1)$} (5,1);
            \draw[thick, dotted]
                (5,1) -- (7.5,1.7);
            \draw[thick]
                (7.5,1.7) -- node[anchor=south]{$S_{n-2}$}(11,3);
            \draw[dashed] 
                (0,3) -- (11,3);
            \draw[dashed]
                (0,0) -- (6,0);
        \end{scope}
        \end{tikzpicture}
    \caption{The geometry of the rational blow-up/blow-down.}
    \label{fig:geometry_of_minimal resolution}
\end{figure}
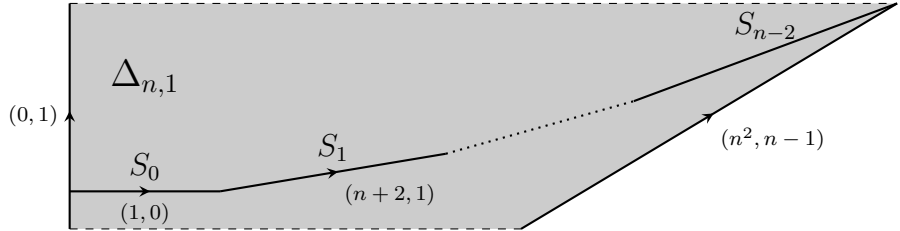
The primitive vector associated to the sphere $S_j$ for $0\leq j\leq n-2$ is given by $(j(n+1)+1,j)$.

Now, assume that we want to blow-up the rational homology ball $B_{n,1}(1)$. 
To do this in a manner that takes the quantitative aspects into account we need to specify the lengths of the primitive vectors associated to the chain of spheres $C_n$, i.e.\ we need to "fit" the configuration of line segments depicted in \cref{fig:geometry_of_minimal resolution} between the primitive vectors $(0,1)$ and $(n^2,n-1)$. 
To see that a choice of sizes as claimed in \cref{eq:app_sizes} is possible we fit the configuration of line segments so that the line segment associated to the spheres $S_{n-2}$ terminates on the right in $(n^2-\epsilon n^2,n-1-\epsilon(n-1))$. 
Then choosing the rest of the sizes as claimed in \cref{eq:app_sizes}, which means giving $S_{n-2}$ size $\epsilon$, then giving $S_{n-3}$ size $\epsilon$, etc., shows that the line segment associated to $S_0$ terminates in $(0,1-(\sum_{j=2}^{n-1}j)\epsilon)$ and the configuration fits into the almost toric base diagram of $B_{n,1}(1)$.

\section{The orthogonal complement of a piwnheel}
\label{Appendix:orthogonal complement}

In this appendix we investigate the meaning of the "homological complement" of a Lagrangian pinwheel. 
This is crucially needed in \cref{sec:obstruction}. 
Assume for the rest of this appendix that $(X,\omega)$ is a closed symplectic $4$-manifold such that $H_1(X;\mathbb{Z})=0$ and that $L$ is an $L_{p,q}$-pinwheel therein. 

Here is a definition of the "orthogonal complement" of the class $[L] \in H_2(X;\mathbb{Z}_p)$ defined by the $L_{p,q}$-pinwheel.

\begin{definition}\label{def:orth_comp}
    Define the \textit{orthogonal complement} of $[L]$ by
    \[[L]^\perp\vcentcolon=\left\{C\in H_2(X;\mathbb{Z})\hspace{0.1cm}\middle\vert\hspace{0.1cm} \widetilde{C}\cdot [L]\equiv 0\mod p \right\},\]
    where $\widetilde{C}$ is the image of $C$ under the natural map $H_2(X;\mathbb{Z}) \to H_2(X;\mathbb{Z}_p)$.
\end{definition}

By our \ref{def:Lag_pinwheels} of embeddings of Lagrangian pinwheels the pinwheel $L$ is actually coming from an embedding $B_{p,q} \hookrightarrow (X,\omega)$ and we will, by abuse of notation, write $L\subseteq B_{p,q}\subseteq X$. 
Denote the embedding of the complement by $\iota:X\setminus B_{p,q} \hookrightarrow X$. 
Recall that for the case that $(p,q)=(2,1)$, i.e.\ the case of embedding a Lagrangian $\mathbb{R}P^2$, \cite[Lemma 4.10]{BoLiWu13} proves that $[L]^\perp$ is contained in the image of $H_2(X\setminus B_{p,q};\mathbb{Z})\xhookrightarrow{\iota_*}H_2(X;\mathbb{Z})$ and that therefore this image is actually equal to $[L]^\perp$. 
The proof given by Borman, Li and Wu in \cite[Lemma 4.10.]{BoLiWu13} is of topological nature, relying on the fact that in the Lagrangian $\mathbb{R}P^2$ case the embeddings are actually honest embeddings of submanifolds. 
Compare also the discussion around \cite[Lemma 2.2.1]{SmSh20}, where this fact is discussed in a more algebraic manner.

This discussion motivates the following definition.

\begin{definition}\label{def:hom_comp}
    Define the \textit{homological complement} of $L$ by
    \[[L]^c \vcentcolon =\text{im}\left(H_2(X\setminus B_{p,q};\mathbb{Z})\xhookrightarrow{\iota_*}H_2(X;\mathbb{Z})\right).\]
\end{definition}

The goal of this appendix is to prove that \cref{def:orth_comp} and \cref{def:hom_comp} define the same object, i.e.\ that $[L]^c=[L]^\perp$. 
We will give a more algebraically flavored proof, leveraging the fact that $[L]^c$ and $[L]^\perp$ have the same index, i.e.\ the order of the quotient lattices satisfy 
\[|H_2(X;\mathbb{Z})/[L]^c|=|H_2(X;\mathbb{Z})/[L]^\perp|=p<\infty.\]

We start by computing the index of $[L]^c$, which was done in a slightly different context in \cite[Lemma 2.16]{EvSm18}. 
We quickly reproduce the argument for the convenience of the reader.

\begin{lemma}\emph{(\cite[Lemma 2.16 (a) and (b)]{EvSm18})}
Let $(X,\omega)$ be a closed symplectic $4$-manifold with $H_1(X;\mathbb{Z})=0$ and let $L$ be an $L_{p,q}$-pinwheel therein. 
Then $[L]^c$ has index $p$ and, in particular, has full rank since $p<\infty$ is finite.
\end{lemma}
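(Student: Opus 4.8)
The plan is to compute the index of $[L]^c = \operatorname{im}\big(H_2(X\setminus B_{p,q};\mathbb{Z})\xrightarrow{\iota_*} H_2(X;\mathbb{Z})\big)$ via a Mayer--Vietoris argument, exactly as in \cite[Lemma 2.16]{EvSm18}. First I would decompose $X = (X\setminus L) \cup \nu L$, where $\nu L$ is the standard neighborhood of the pinwheel, diffeomorphic to $B_{p,q}$. Since $B_{p,q}$ retracts onto its Lagrangian skeleton, and $B_{p,q}$ is a rational homology ball with $H_*(B_{p,q};\mathbb{Q}) = H_*(\text{pt};\mathbb{Q})$ and $\partial B_{p,q} = L(p^2,pq-1)$, I would feed the homology of $B_{p,q}$, of the lens space $L(p^2,pq-1)$, and of $X$ into the Mayer--Vietoris sequence with $\mathbb{Z}$ coefficients.

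The key computation is the following portion of the Mayer--Vietoris sequence (writing $Y = X\setminus B_{p,q}$, noting $Y \cap \nu L \simeq L(p^2,pq-1)$):
\begin{equation*}
H_2(L(p^2,pq-1)) \to H_2(Y)\oplus H_2(B_{p,q}) \to H_2(X) \to H_1(L(p^2,pq-1)) \to H_1(Y)\oplus H_1(B_{p,q}).
\end{equation*}
Here $H_2(L(p^2,pq-1)) = 0$ and $H_1(L(p^2,pq-1)) = \mathbb{Z}/p^2$, while $H_2(B_{p,q}) = 0$ and $H_1(B_{p,q}) = \mathbb{Z}/p$ (the abelianization of $\pi_1$ of the rational homology ball, which surjects onto $\mathbb{Z}/p$). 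I would then use that the connecting map $H_1(L(p^2,pq-1)) = \mathbb{Z}/p^2 \to H_1(B_{p,q}) = \mathbb{Z}/p$ is the natural surjection (this is part of the standard description of $B_{p,q}$ as a quotient construction / Milnor fibre, and is where one must be slightly careful). Since $H_1(X) = 0$, exactness forces $H_2(X)/\operatorname{im}(\iota_*)$ to be isomorphic to the kernel of $\mathbb{Z}/p^2 \to \mathbb{Z}/p$, which has order $p$. Therefore $[L]^c$ has index $p$ in $H_2(X;\mathbb{Z})$, and since $H_2(X;\mathbb{Z})/[L]^c$ is finite, $[L]^c$ has full rank.

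I expect the main obstacle to be pinning down the connecting homomorphism $H_1(\partial B_{p,q}) \to H_1(B_{p,q})$ precisely, i.e.\ verifying that the inclusion-induced map $\mathbb{Z}/p^2 \to \mathbb{Z}/p$ is surjective rather than, say, trivial or a map with smaller image. This follows from the explicit model of $B_{p,q}$ recalled in \cref{Subsection_Lagrangian_Pinwheels} (the rational homology ball is the total space of a $D^2$-bundle-like quotient, and the meridian of the boundary lens space maps to a generator of $H_1(B_{p,q})$), and it is precisely the point handled in \cite[Lemma 2.16]{EvSm18}; I would cite that computation and reproduce only the short exact-sequence bookkeeping. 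Everything else is formal diagram-chasing in Mayer--Vietoris together with the hypothesis $H_1(X;\mathbb{Z})=0$.
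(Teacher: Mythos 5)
Your decomposition and Mayer--Vietoris sequence are exactly the ones the paper uses, and your homology inputs for $B_{p,q}$ and for the lens space $L(p^2,pq-1)$ are correct. There is, however, one step that is glossed over and is not merely ``formal diagram-chasing'': exactness identifies $H_2(X;\mathbb{Z})/[L]^c$ with the kernel of the map $\gamma\colon \mathbb{Z}_{p^2}\to H_1(Y;\mathbb{Z})\oplus H_1(B_{p,q};\mathbb{Z})$ into the \emph{direct sum}, not with the kernel of the single component $\mathbb{Z}_{p^2}\to H_1(B_{p,q};\mathbb{Z})=\mathbb{Z}_p$. To conclude that these two kernels agree you must control $H_1(Y;\mathbb{Z})$, which your argument never addresses. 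The paper handles this with a short algebraic observation: since $H_1(X;\mathbb{Z})=0$, exactness makes $\gamma$ surjective, so $H_1(Y)\oplus\mathbb{Z}_p$ is a cyclic group whose order divides $p^2$; for the direct sum to be cyclic the order of $H_1(Y)$ must be coprime to $p$, while divisibility forces it to divide $p$, whence $H_1(Y)=0$. Only then does $\lvert\ker\gamma\rvert=p^2/p=p$ give the claimed index.

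By contrast, the point you single out as the main obstacle --- verifying that the connecting map $\mathbb{Z}_{p^2}\to\mathbb{Z}_p$ is the natural surjection rather than trivial or of smaller image --- is actually automatic: surjectivity of $\gamma$ onto its entire target already follows from exactness together with $H_1(X;\mathbb{Z})=0$, and any surjection $\mathbb{Z}_{p^2}\to\mathbb{Z}_p$ has kernel of order $p$, so no explicit model of $B_{p,q}$ or citation of \cite[Lemma 2.16]{EvSm18} is needed for that step. The genuine care is required for the $H_1(Y)$ factor, not for the map to $H_1(B_{p,q})$.
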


\begin{proof}
Denote by $B$ the rational homology ball $B_{p,q}$ associated to the $L_{p,q}$-pinwheel $L$ and let $V$ be its complement, sharing the same boundary $\Sigma$, which is diffeomorphic to the lens spaces $L(p^2,pq-1)$. 
Then, using that $H_1(\Sigma;\mathbb{Z})=\mathbb{Z}_{p^2}$ and $ H_2(\Sigma;\mathbb{Z})=0$, Mayer--Vietoris gives
\[
\begin{tikzcd}
0 \arrow[r] & H_2(V;\mathbb{Z}) \arrow[r, "\alpha"] & H_2(X;\mathbb{Z}) \arrow[r, "\beta"] & \mathbb{Z}_{p^2} \arrow[r, "\gamma"] & \mathbb{Z}_{p}\oplus H_1(V;\mathbb{Z}) \arrow[r] & 0
\end{tikzcd}
.\]
Note that $H_1(V;\mathbb{Z})$ is trivial, which follows from the fact that surjectivity of $\gamma$ implies that $\mathbb{Z}_p\oplus H_1(V;\mathbb{Z})$ is cyclic, thus the order of $H_1(V;\mathbb{Z})$ must be coprime to $p$, but then, by surjectivity, it should also divide $p^2$ implying that it is $1$.
Moreover, the index of $[L]^c$ is exactly the order of $\text{coker}(\alpha)$ which by exactness of the sequence is $\text{im}(\beta)=\text{ker}(\gamma)$. 
However, $\text{ker}(\gamma)=\mathbb{Z}_p$ since $\gamma$ is just reduction modulo $p$.
\end{proof}

\begin{lemma}
Let $(X,\omega)$ be a closed symplectic $4$-manifold with $H_1(X;\mathbb{Z})=0$ and let $L$ be an $L_{p,q}$-pinwheel therein.
Then, $[L]^\perp$ has index $p$ and, in particular, has full rank since $p<\infty$ is finite.
\end{lemma}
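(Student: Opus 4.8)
The plan is to identify $[L]^\perp$ with the kernel of an explicit homomorphism $H_2(X;\mathbb{Z})\to\mathbb{Z}_p$ and then show that this homomorphism is surjective, so that the quotient has order exactly $p$.

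First I would record the purely homological input. Since $H_1(X;\mathbb{Z})=0$, the group $H_2(X;\mathbb{Z})$ is torsion-free (Poincar\'e duality together with the universal coefficient theorem), hence free abelian, so the reduction map $r\colon H_2(X;\mathbb{Z})\to H_2(X;\mathbb{Z}_p)$ is surjective; in fact universal coefficients identifies $H_2(X;\mathbb{Z}_p)$ with $H_2(X;\mathbb{Z})\otimes\mathbb{Z}_p$. Next I would consider the homomorphism $g\colon H_2(X;\mathbb{Z})\to\mathbb{Z}_p$ defined by $g(C):=r(C)\cdot[L]$, where $\cdot$ denotes the $\mathbb{Z}_p$-valued intersection form on $H_2(X;\mathbb{Z}_p)$, i.e.\ the mod-$p$ reduction of the intersection form of $X$, which is precisely the pairing appearing in \cref{def:orth_comp}. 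By that definition $[L]^\perp=\ker g$, so once $g$ is shown to be surjective one gets $H_2(X;\mathbb{Z})/[L]^\perp\cong\mathbb{Z}_p$, which has order $p$, and $[L]^\perp$ automatically has full rank because the quotient is finite.

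For the surjectivity of $g$ — the step where the symplectic geometry actually enters — I would invoke the self-intersection formula of \cref{Lag_pinwheels_self_int}. Choosing any integral lift $\widehat L\in H_2(X;\mathbb{Z})$ of $[L]$, which exists since $r$ is onto, one computes
\[
g(-\widehat L)=-\,r(\widehat L)\cdot[L]=-\,[L]\cdot[L]\equiv 1\pmod p,
\]
so $1\in\operatorname{im}(g)$ and hence $g$ is surjective. This closes the proof, and together with the previous lemma (which showed that $[L]^c$ also has index $p$) it sets up the eventual equality $[L]^c=[L]^\perp$.

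The only delicate point, and the one I would be most careful about, is bookkeeping across the coefficient systems: one must check that the $\mathbb{Z}_p$-valued pairing used to state \cref{Lag_pinwheels_self_int} is literally the pairing occurring in \cref{def:orth_comp}, and that $[L]$ — which a priori lives only in $H_2(X;\mathbb{Z}_p)$ — admits an integral lift. Both are immediate consequences of $H_1(X;\mathbb{Z})=0$, but they are exactly the spots where the argument would fail for more general $X$. Everything beyond that is formal, with no genuine computation required.
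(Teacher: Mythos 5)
Your proposal is correct and follows essentially the same route as the paper: identify $[L]^\perp$ as the kernel of the mod-$p$ intersection functional $C\mapsto \widetilde{C}\cdot[L]$ and deduce surjectivity from the self-intersection formula $[L]\cdot[L]\equiv-1\pmod p$ of \cref{Lag_pinwheels_self_int}. Your extra care in checking that an integral lift of $[L]$ exists (via $H_1(X;\mathbb{Z})=0$ and torsion-freeness of $H_2$) is a point the paper passes over implicitly, but the argument is the same.
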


\begin{proof}
Consider the functional $F_{[L]}: H_2(X;\mathbb{Z})\rightarrow \mathbb{Z}_p$ given by $F_{[L]}(C)=\widetilde{C}\cdot [L]\mod p$, where $\widetilde{C}$ is the image of $C$ under the natural map $H_2(X;\mathbb{Z}) \to H_2(X;\mathbb{Z}_p)$. 
Since $[L]^\perp=\text{ker}(F_{[L]})$, we only have to show that $F_{[L]}$ is surjective. Quite pleasantly, this follows from \cref{Lag_pinwheels_self_int}, since $F_{[L]}(\widetilde{[L]})=-1 \mod p $, where $\widetilde{[L]}$ is some integral lift of $[L]$.
\end{proof}

Since $[L]^c\subseteq [L]^\perp$ and the two subgroups have the same index, we arrive at the desired conclusion:

\begin{theorem}\label{thrm:pinwheel_hom/orth_comp}
Let $(X,\omega)$ be a closed symplectic $4$-manifold with $H_1(X;\mathbb{Z})=0$ and let $L$ be an $L_{p,q}$-pinwheel therein. 
Then, a homology class $C\in H_2(X;\mathbb{Z})$ is represented by a smooth submanifold disjoint from $L$ if and only if 
\[\widetilde{C}\cdot [L]=0\mod p,\]
where $\widetilde{C}$ is the image of $C$ under the natural map $H_2(X;\mathbb{Z}) \to H_2(X;\mathbb{Z}_p)$, i.e.\ $[L]^c=[L]^\perp$.
\end{theorem}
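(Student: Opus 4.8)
The plan is to deduce the theorem purely from the two index computations that precede it, together with the inclusion $[L]^c\subseteq[L]^\perp$. First I would record why this inclusion holds: if $C$ is represented by a submanifold lying in $X\setminus B_{p,q}$, then in $H_2(X;\mathbb{Z}_p)$ the cycle $\widetilde C$ can be pushed entirely off a neighborhood of $L$, so $\widetilde C\cdot[L]=0\bmod p$, which is exactly the condition defining $[L]^\perp$. (Here one uses that the self-intersection and mutual intersection pairings can be computed by generic representatives, and that $[L]$ is supported in $B_{p,q}$.) Hence $\iota_*H_2(X\setminus B_{p,q};\mathbb{Z})\subseteq[L]^\perp$.

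Next I would invoke the two lemmas just proved: $[L]^c$ has index $p$ in $H_2(X;\mathbb{Z})$, and $[L]^\perp$ also has index $p$. Since both are subgroups of the finitely generated free abelian group $H_2(X;\mathbb{Z})$ of the \emph{same} finite index, an inclusion of one into the other forces equality: if $A\subseteq B\subseteq \Lambda$ with $[\Lambda:A]=[\Lambda:B]<\infty$, then $[B:A]=[\Lambda:A]/[\Lambda:B]=1$, so $A=B$. Applying this with $A=[L]^c$, $B=[L]^\perp$, $\Lambda=H_2(X;\mathbb{Z})$ gives $[L]^c=[L]^\perp$.

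Finally I would translate this equality back into the geometric statement in the theorem. The containment $C\in[L]^c$ means, by \cref{def:hom_comp}, that $C$ is in the image of $\iota_*$, i.e.\ $C$ is represented by a submanifold of $X\setminus B_{p,q}$, hence disjoint from $L$; conversely any class represented by a submanifold disjoint from $L$ can be pushed into $X\setminus B_{p,q}$ (shrinking the standard neighborhood if necessary), so it lies in $[L]^c$. Combined with $[L]^c=[L]^\perp$, this says precisely that $C$ admits a disjoint representative if and only if $\widetilde C\cdot[L]=0\bmod p$.

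I do not expect a serious obstacle here: the content is entirely in the two preceding index lemmas, and the present theorem is the formal "same index plus inclusion implies equality" step together with an unwinding of definitions. The only point requiring a little care is the inclusion $[L]^c\subseteq[L]^\perp$ — specifically, justifying that a cycle disjoint from $B_{p,q}$ pairs trivially mod $p$ with $[L]$ — but this is immediate from the fact that $[L]$ is carried inside $B_{p,q}$ and intersection numbers are computed by transverse representatives, which can be chosen away from $B_{p,q}$.
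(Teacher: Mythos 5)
Your proposal is correct and follows exactly the paper's argument: establish $[L]^c\subseteq[L]^\perp$, invoke the two preceding index-$p$ lemmas, and conclude equality from the "equal finite index plus inclusion" principle. The paper's own proof is precisely this one-line deduction after the two lemmas, so there is nothing to add beyond your (slightly more explicit) justification of the inclusion.
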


\emergencystretch=1em

\printbibliography
\end{document}